%% file: ddd.tex
\documentclass[]{scrartcl}
\usepackage[]{custom-article}
\usepackage{custom-bib}
\usepackage[dvipsnames]{xcolor}
\usepackage{siunitx}
\usepackage{booktabs}
\usepackage{pgf}
\usepackage{orcidlink}

\usepackage{todonotes}
\newcommand{\Alex}[2][]{
	\ifthenelse{\isempty{#1}}{
		\todo[color=pink!70!white!30,author=\textbf{Alex},inline]{ #2}
	}{
		\todo[color=pink!70!white!30,author=\textbf{Alex},inline,caption={#1}]{ #2}
	}
}

\newcommand{\consolidationFormat}[1]{}

\newcommand{\snd}{\complexitylangformat{SND}\xspace}
\newcommand{\rsnd}{\complexitylangformat{R-SND}\xspace}
\newcommand{\orp}{\complexitylangformat{ORP}\xspace}
\newcommand{\rorp}{\complexitylangformat{R-ORP}\xspace}
\newcommand{\solsym}{S}
\newcommand{\pathsym}{\calP}
\newcommand{\spathsym}{P}
\newcommand{\walksym}{W}
\newcommand{\conssym}{\calC}
\newcommand{\edt}[2][k]{\underline{t}_{#1}({#2})}
\newcommand{\ldt}[2][k]{\overline{t}_{#1}({#2})}
\newcommand{\vdn}[2]{({#1},{#2})}
\newcommand{\rounddown}[2]{\lfloor#1\rfloor_{\calT_{#2}}}
\newcommand{\rat}[2][\walksym]{\pi_{#1}(#2)}
\newcommand{\ratp}[2][\walksym]{\pi'_{#1}(#2)}
\newcommand{\ratw}[1]{\pi_{#1}}
\newcommand{\rdt}[2][\walksym]{t_{#1}(#2)}
\newcommand{\tlpsymb}{\mathcal{W}}

\newcommand{\labels}{L}
\newcommand{\starts}{\calS}
\newcommand{\boundaries}{\calB}

\newcommand{\exgraph}{D_\tlpsymb}
\newcommand{\candgraph}{\exgraph^\uparrow}

\newcommand{\bigO}{\mathcal{O}}
\newcommand{\orpsol}{\Phi}

\LinesNumbered

\DeclareMathOperator{\delay}{\Delta_S}
\DeclareMathOperator{\exc}{\Delta_L}
\DeclareMathOperator{\capacity}{cap}
\DeclareMathOperator{\requirement}{req}

\makeatletter
\renewcommand{\maketitle}{\par\begingroup
	\setlength{\parindent}{0pt}\setlength{\parskip}{0pt}\raggedright
	{\normalfont\Large\scshape\@title\par}\vspace{.8em}{\normalfont\normalsize\@author
		\hfill{\footnotesize\color{black!60}}\par}\endgroup\par
}
\makeatother

\title{Optimal Refinement in Dynamic Discretisation Discovery
	for Continuous-Time Service Network Design}
\author{Helber, Alexander\,\orcidlink{0009-0006-3482-4632}, Bachtler, Oliver\,\orcidlink{0000-0001-7942-0750}\\
	{\small Chair of Operations Research, RWTH Aachen University, Germany\\
	\href{mailto:helber@or.rwth-aachen.de}{helber@or.rwth-aachen.de}\enspace\textbar\enspace
	\href{mailto:bachtler@or.rwth-aachen.de}{bachtler@or.rwth-aachen.de}}\\[.4em]
	Wüllner, Tom\,\orcidlink{0009-0000-1782-1372}\\
	{\small RWTH Aachen University, Germany\\
	\href{mailto:tom.wuellner@rwth-aachen.de}{tom.wuellner@rwth-aachen.de}}}

\begin{document}
\maketitle
\thispagestyle{plain}

\input{sections/abstract.tex}
\begingroup
\setlength{\parskip}{0pt}
\RedeclareSectionCommand[beforeskip=.4\baselineskip]{paragraph}
\par
\endgroup

\input{sections/introduction.tex}

\input{sections/snd.tex}
\input{sections/ddd-algo.tex}
\input{sections/refinement.tex}
\input{sections/variants.tex}
\input{sections/computational-study.tex}

\input{sections/conclusion.tex}

\printbibliography
\clearpage
\appendix
\addpart{Appendix}
\input{sections/relaxation-mip.tex}

\end{document}

%% file: sections/abstract.tex
\paragraph{Abstract.}
Dynamic Discretisation Discovery (DDD) is a framework for solving a problem by iteratively solving and refining a relaxed version of the problem.
DDD is the basis for state-of-the-art algorithms for various problems, including the continuous-time service network design problem (CTSNDP), which we study here.
In the refinement step, such algorithms must identify \emph{conflicts} in the relaxation solution that make it infeasible for the original problem and then apply \emph{fixes} to the relaxation to prevent them from recurring.
We study the \emph{optimal refinement problem} of selecting from a set of fixes a subset that fixes all conflicts and yields a relaxation of minimum size, with the aim of making it easier to solve.

We demonstrate that while the number of conflicts may be exponential in the instance parameters, we can represent them more compactly, using only pseudo-polynomial space and time.
Based on our more compact representation, we develop exact methods for solving the refinement problem.
The best method allows us to find small relaxations in negligible amounts of time, more than 100 times faster than a natural integer programming formulation for some instance classes.
Using this refinement approach in a DDD algorithm for the CTSNDP, we observe reductions in model sizes and running times of about \qtyrange{10}{20}{\percent} compared to existing approaches.

\noindent\hrulefill

%% file: sections/introduction.tex
\section{Introduction}
Service network design problems arise in the planning of transport systems, especially for consolidation-based carriers \cite{crainic_network_2021}.
These carriers transport shipments that are usually much smaller than the capacity of the vehicles used, so economical operations rely on consolidating shipments.
Therefore, the routing of commodities and the dispatches of vehicles transporting them must be determined jointly.
We study the continuous-time service network design problem~\cite{boland_continuous-time_2017}, in which dispatch times must also be determined, while ensuring that all commodities arrive by their respective deadlines.

Current state-of-the-art approaches~\cite{helber_arc-based_2026,marshall_interval-based_2021,shu_new_2025} for this problem are all based on the dynamic discretisation discovery (DDD) framework first introduced by~\textcite{boland_continuous-time_2017}.
So far, DDD has been mostly applied to problems that can be modelled on time-expanded networks. 
We note that the DDD concept is more general and can be applied to other expanded graphs. For example, \textcite{Riedler2019} describe its application to the rooted distance-constrained minimum spanning tree problem.
In the remainder of the introduction, we will focus on time-based problems, but the general ideas are relevant for any problem amenable to DDD.

The DDD framework is based upon the idea of constructing a relaxed version of the problem to be solved, also called a \emph{relaxation}.
This relaxed problem is formulated on a time-expanded network based on a given discretisation of time (for the nodes/arcs of the network that is being expanded).
If the discretisation is sufficiently sparse, the relaxed problem may be solved much faster than the original problem, yet its optimal solutions provide good dual bounds on the value of the original problem.

We say a solution that is feasible for the relaxed problem is \emph{representable} with respect to the chosen discretisation \cite{marshall_interval-based_2021} and call feasible solutions to the original problem \emph{implementable}.
In the framework, an optimal solution to the relaxation is computed and, if it is implementable (or can be converted into an implementable solution of the same value), we can terminate with a provably optimal solution.
Otherwise, we refine the discretisation (and thus the relaxation) in such a way that the non-implementable solution also becomes non-representable and cannot recur, guaranteeing eventual convergence.
This process is also outlined in~\cref{fig:ddd-flowchart}.

\begin{figure}[tbp]
    \centering
    \begin{tikzpicture}[
        node distance=4mm and 8mm,
        process/.style={draw, rectangle, align=center, text width=45mm, minimum height=10mm},
        flow/.style={->, >=stealth, thick}
    ]
        \node[process] (initialise) {Initialise relaxation};
        \node[process, below=of initialise] (solve) {Find optimal \mbox{solution~$\solsym$} for the relaxation};
        \node[process, below=of solve] (check) {Is $\solsym$ implementable?};
        \node[process, right=of check, text width=10mm] (terminate) {Stop};
        \node[process, below=of check] (refine) {Refine relaxation to make\\$\solsym$ non-representable};

        \draw[flow] (initialise) -- (solve);
        \draw[flow] (solve) -- (check);
        \draw[flow] (check) -- node[above] {Yes} (terminate);
        \draw[flow] (check) -- node[right] {No} (refine);
        \draw[flow] (refine.west) -- ++(-8mm,0) |- (solve.west);
    \end{tikzpicture}
    \caption{Overview of the dynamic discretisation discovery approach.}
    \label{fig:ddd-flowchart}
\end{figure}

In this work, we mostly concern ourselves with the refinement step, thus it is interesting to reflect on existing approaches. 
When designing a DDD algorithm, we require some concept of \emph{conflicts} that prevent a representable solution from being implementable \cite{boland_perspectives_2019,marshall_interval-based_2021}.
Then, we would like to have one or more \emph{fixes} available for each conflict; if a fix is applied to the discretisation, the solution becomes non-representable.
A fix usually consists of adding one or more time points to the discretisation.
It may be possible to describe multiple types of conflicts, and for each conflict type multiple types of fixes. 

While it suffices to fix a single conflict in each iteration to guarantee convergence, it has been observed that fixing many conflicts at once can decrease the number of iterations required.
But, a fix may increase the size of the relaxed problem, making it harder to solve.
So to design an efficient DDD algorithm, one has to decide which (types of) conflicts to resolve and with which fixes, balancing relaxation difficulty against the number of iterations.
This trade-off has been observed in many studies where different refinement strategies are compared \cite{boland_perspectives_2019,he_exact_2022,he_dynamic_2022,marshall_interval-based_2021,Riedler2019,scherr_dynamic_2020}.
It is also possible to fix some conflicts by adding valid inequalities to the relaxed problem instead of changing the discretisation \cite{boland_perspectives_2019,hewitt_enhanced_2019}, but this leads to essentially the same trade-off.

For various problem classes, \textcite{marshall_interval-based_2021} and \textcite{Riedler2019} find that it generally pays off to fix (almost all) observed problems, but using fixes that increase discretisation size as little as possible. 
To this end, \textcite{marshall_interval-based_2021} also identify conflicts that lie \enquote{upstream} of other conflicts and prefer to fix these first, in the hope that this also fixes the downstream conflicts. 

The hope that an observed conflict may not recur if other conflicts are fixed is also reflected in the idea of removing time points from the discretisation \cite{hewitt_enhanced_2019,Riedler2019,scherr_dynamic_2020}.
We also observed in preliminary experiments that, once DDD converges, quite a substantial number (often more than \SI{30}{\percent}) of time points can be removed from the discretisation before the bound quality changes. 
Yet it is unclear which time points are safe to remove; the removal of a necessary time point may cause another iteration of the algorithm.
In fact, \textcite{van_dyk_routing_2024} proves that a naive strategy that simply removes time points that are not in the support of the solution can lead to cycling, preventing the algorithm from converging.

To summarise, we have observed the following:
there is substantial freedom in which conflicts to fix (and how), the chosen fixes influence the difficulty of solving the relaxed problem and the number of DDD iterations, many added time points may not be required in later iterations, but removing time points naively can prevent algorithm convergence.
These observations motivate us to ask the following question:
\begin{center}
    Can we find a minimum discretisation that fixes all observed conflicts?
\end{center}
Finding such a discretisation in each iteration would directly address the last concern: the new discretisation would not necessarily contain the time points of the previous one, effectively allowing us to remove time points. 
But since all previously observed conflicts remain fixed, no cycling can occur.
Since the discretisation is small (with respect to some so far undefined size measure), we may hope for relaxations that are easier to solve.

This approach would also make full use of situations where many different fixes are available for each conflict, selecting a convenient subset of them.
Note that for the scope of this study, we set aside the question of which conflicts to fix and commit to fixing all of them. 
We leave the selection of a good subset of conflicts to fix (and keep fixed) to future work.

To the best of our knowledge, the literature that explicitly optimises the set of fixes to be applied is limited to the following works.

\textcite{Lieshout26} solve a vehicle scheduling problem with trip shifting using DDD. 
In their approach, a (representable) solution consists of a flow over a time-expanded network that is then decomposed into duties for each vehicle.
If a duty cannot be implemented, the relaxation is refined to ensure that it cannot occur again. 
Since they know how many time points each duty would add to the discretisation, they pose an auxiliary problem that selects a duty decomposition leading to the fewest added time points.
We note that the additional time points they add to prohibit a single non-implementable duty are sufficient, but not necessary to achieve this goal. 
In other words, there may be more fixes available than the ones they utilise.
They also do not account for double-counting of time points that might be added by multiple duties.
In their computational experiments, they observe little change in running time from finding such smaller discretisations.
The approach is outperformed by a method that simply refines all non-implementable duties possible in the relaxation solution, leading to marginally larger networks but fewer iterations and shorter running times.

For the service network design problem studied in this work, \textcite{shu_new_2025} proposed a conflict concept that subsumes the concepts introduced by \textcite{marshall_interval-based_2021}. 
Notably, they introduce a sufficient characterisation of a discretisation that prevents a conflict from recurring.
This characterisation implies that one conflict may permit multiple fixes.
Based on this observation, the authors introduce a greedy heuristic to construct a discretisation that fixes all observed conflicts.
In this work, we introduce a closely related but simpler sufficient criterion for such fixes, and use it to develop various exact methods for solving the \emph{optimal refinement problem} of finding a minimum discretisation that selects one of these fixes for each conflict.
In our short paper~\textcite{Wullner2026-xl} (which the current work extends) we already demonstrated that this problem is NP-hard and that being able to solve it efficiently can be beneficial for the overall DDD algorithm.

We note that shortly before finishing this work, we became aware of the new version \cite{shu_new_2026} of the previously mentioned preprint \cite{shu_new_2025}.
In the updated version, the authors also explicitly pose the optimal refinement problem, using a slightly stronger characterisation than ours.
They further propose a matheuristic to solve the problem. 
We discuss the similarities and differences between the approaches where appropriate throughout the paper.

Our contributions in this paper are:
\begin{itemize}
    \item We provide a focused exposition of the main concepts for refinement introduced by~\textcite{marshall_interval-based_2021} and~\textcite{shu_new_2025,shu_new_2026}, with small extensions to theoretical results. \item We describe situations in which the approach of~\textcite{shu_new_2026} leads to a conflict set whose size is exponential in some instance parameters.
    \item We develop a more compact graph-based representation of the conflict set, whose size is pseudo-polynomial in the instance parameters and much smaller in practice.
    \item This representation allows us to restate the optimal refinement problem and develop a mixed-integer programming formulation as well as a DDD approach to solve it.
    \item We conduct computational experiments to demonstrate the performance of our methods and investigate the impact this refinement approach has on the overall DDD algorithm.
\end{itemize}

%% file: sections/snd.tex
\section{The Continuous-Time Service Network Design Problem}

Before we introduce the continuous-time service network design problem, we want to briefly collect typical notation used throughout the paper.
Here, all our intervals contain only integer values.
Thus, when we write $[m,n]$, for $m$, $n\in\NN$ with $m\leq n$, we refer to the set $\Set{m,\ldots,n}$.
When $m=0$, we also just write $[n]$ instead of $[0,n]$ and we also use this notation for open and half-open intervals such as $(m,n] = \Set{m+1,\ldots,n}$ and $(n) = \Set{1,\ldots,n-1}$.

We write $\V{D}$ and $\A{D}$ for the node and arc sets of a \emph{directed graph} $D$.
For a node $v$, we denote its predecessors and successors $\Nin{v}, \Nout{v}$ and its in- and outgoing arcs $\Ain{v},\Aout{v}$.
A \emph{walk} $\walksym$ in $D$ is a sequence $v_0a_1v_1\ldots a_nv_n$ of nodes and arcs such that $a_i=v_{i-1}v_i$.
Depending on the context, we just denote $\walksym$ by $a_1\ldots a_n$ or $v_0\ldots v_n$.
We also write $v_i\walksym v_j$ for the sub-walk $v_i\ldots v_j$ of $\walksym$, where $v_i\walksym = v_i\walksym v_n$ and $\walksym v_j = v_0\walksym v_j$.
If $\walksym$ repeats no nodes, we call it a \emph{path}.
We also use the typical notation $\tau(\walksym) = \sum_{i=1}^n \tau(a_i)$ for an arc length $\tau\colon\A{D}\to\ZZ_{>0}$.
Additionally, we write $\dist[\tau]{v,v'}$ for the length of a shortest path from $v$ to $v'$ with respect to $\tau$.

Now let us define the continuous-time service network design problem (\snd).
We are given a network $D$ (a directed graph) with nodes $\V{D}$ and arcs $\A{D}$, through which a set of commodities $\calK$ should be transported.
Each commodity $k$ is associated with a release time $r_k \in \NN$ and a deadline $\ell_k \in \NN$, and each arc $a$ with a travel time $\tau(a)\in \NN_{>0}$.
For each commodity, we need a path $\spathsym_k$ in $D$ from its origin node $o_k$ to its destination node $d_k$.
A small example can be seen in \cref{fig:snd-illustration}, where we specify the release time $r_k$ at the origin $o_k$ and deadline $\ell_k$ at the destination node $d_k$ of commodity $k$.
Note that the paths $\spathsym_k$ are unique in this example.

\begin{figure}[tbp]
	\centering
	\begin{tikzpicture}
		\node[draw,label={[align=left]left:$r_{k_1}=1$\\$r_{k_2}=0$}] (v1) {$v_1$};
		\node[draw, right=3 of v1,label={[align=left]above:$\ell_{k_1}=3$\\$r_{k_3}=1$}] (v2) {$v_2$};	
		\node[draw, right=3 of v2,label={[align=left]right:$\ell_{k_2}=8$\\$\ell_{k_3}=3$}] (v3) {$v_3$};
		\draw[->] (v1) to node[below]{$\Set{k_1,k_2}$} node[above]{$\tau(v_1v_2)=2$}(v2);
		\draw[->] (v2) to node[below]{$\Set{k_2,k_3}$} node[above]{$\tau(v_2v_3)=2$}(v3);
	\end{tikzpicture}
	
	\caption{Example of an \snd instance with three commodities $\calK=\Set{k_1,k_2,k_3}$.}
	\label{fig:snd-illustration}
\end{figure}

Furthermore, we want to decide for every arc of $\spathsym_k$ with which other commodities $k$ is consolidated on that arc, that is, which commodities traverse the arc together.
If we want to say that the commodities $\kappa \subseteq \calK$ are consolidated on arc $a$, we combine them in a tuple $c = (a,\kappa)$ and call $c$ a \emph{consolidation}.
In the example, we have tried to consolidate everything that could potentially be consolidated, leading to the two consolidations $c_1=(v_1v_2,\Set{k_1,k_2})$ and $c_2=(v_2v_3,\Set{k_2,k_3})$.

Hence, we are effectively looking for a set of consolidations $\conssym \subseteq \A{D} \times 2^\calK$ such that each commodity appears in exactly one consolidation for each arc in its path, and does not appear in any other consolidations. 
A \emph{solution} $\solsym = (\pathsym,\conssym)$ now consists of paths $\pathsym = (\spathsym_k)_{k\in\calK}$ for each commodity and corresponding consolidations~$\conssym$.

Next, we specify which solutions are feasible.
We now require consistent dispatch times for each consolidation, such that all commodities are transported within their respective time windows.
Denote the unique consolidation of the arc $a \in \spathsym_k$ containing $k$ by $c(a,k)$.
Moreover, for a consolidation $c = (a,\kappa)$ we write $a(c) = a$ and $\kappa(c) = \kappa$.
We say the solution \(\solsym\) is \emph{feasible} for SND or \emph{implementable} if dispatch times $t \colon \conssym \to\NN$ exist such that, for all $k\in \calK$ with $\spathsym_k = a_1\ldots a_n$ and $c_i = c(a_i,k)$ for $i\in(n]$,
\begin{enumprop}
	\item $t(c_1) \geq r_k$, 
	\item $t(c_{i+1}) \geq t({c_i}) + \tau(a_i)$ for all $i\in (n)$, and
	\item $t(c_n) + \tau(a_n) \leq \ell_k$.
\end{enumprop}
The first condition ensures that a commodity is not dispatched before it is released, the second that it cannot be dispatched from an intermediate node before arriving there, and the third that it arrives by its deadline.
Here we see that our specified solution is not implementable, since consolidating $k_2$ and $k_3$ on the second arc requires $t(c_2)\geq r_{k_2}+\tau(v_1v_2) = 2$ but $t(c_2)+\tau(v_2v_3)\leq \ell_{k_3}=3$, which is impossible.
By replacing $c_2$ by the singleton \enquote{consolidations} $(v_2v_3,\Set{k_2})$ and $(v_2v_3,\Set{k_3})$, we can obtain a feasible solution: dispatch $c_1$ at time $1$, then $k_2$ on the second arc at time $3$ and $k_3$ at time $1$.

The objective is to minimise the total transportation cost, which consists of two parts. First, a commodity $k$ incurs a flow cost of $c_k(a)$ for traversing arc $a$. 
Second, transporting the commodities in a consolidation $(a,\kappa)$ requires a certain number of vehicles.
Each commodity $k$ requires a quantity $q_k$ to be transported, and for each arc $a$ we are given the capacity $u(a)$ of available vehicles and cost $f(a)$ for dispatching one vehicle on this arc. 
Overall, solving the continuous-time service network design problem (\snd) consists of finding a feasible solution $\solsym$ minimising
\begin{displaymath}
	c(\solsym) = \sum_{k\in\calK}\sum_{a\in \spathsym_k}c_k(a) + \sum_{(a,\kappa)\in\conssym} f(a)\left\lceil \frac{\sum_{k\in\kappa}q_k}{u(a)} \right\rceil.
\end{displaymath}

For the remainder of this paper, we assume that the input data for the problem are given and refrain from stating it in all of our statements.
The following derived instance parameters will also come in handy later:
\begin{definition}[Earliest and latest dispatch times]
	For $k\in \calK$ and $v\in \V{D}$, the \emph{earliest dispatch time} of commodity $k$ at node $v$ is $r_k+\dist[\tau]{o_k,v}$ and is denoted by $\edt{v}$. The \emph{latest dispatch time} of $k$ at $v$, $\ldt{v}$, is $\ell_k-\dist[\tau]{v,d_k}$.
	
	We extend the notation to arcs $a=vv'$ by setting $\edt{a} = \edt{v}$ and $\ldt{a}=\ldt{v'} - \tau(a)$.
	Thus, the interval in which a commodity $k$ can be dispatched on an arc $a$ is specified by $I_k(a) = [\edt{a},\ldt{a}]$.
\end{definition}
We assume that the network is sufficiently connected for these values to be defined.
Additionally, we assume without loss of generality that $\min_{k \in \calK} r_k = 0$ and denote the time horizon $\max_{k \in \calK} \ell_k+1$ by $H$.

\begin{observation}
	\label{intervals-implementable-dt}
	Let $\solsym=(\pathsym,\conssym)$ be an implementable solution with dispatch times $t\colon\conssym\to\NN$. Then, for $c=(a,\kappa)\in \conssym$ and $k\in\kappa$, $t(c)\in I_k(a)$.
\end{observation}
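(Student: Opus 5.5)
We have to show $\edt{a}\le t(c)\le\ldt{a}$ for $a=vv'$. The plan is to fix $k\in\kappa$, write $\spathsym_k=a_1\ldots a_n$ with consolidations $c_i=c(a_i,k)$, and note that $a=a_j$ for some $j$. Since $\spathsym_k$ contains exactly one consolidation containing $k$ on each of its arcs, $c=c_j$. We then chain the feasibility inequalities forwards along the prefix of the path for the lower bound and backwards along the suffix for the upper bound.

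\emph{Lower bound.} Conditions~(i) and~(ii) give, by induction on $i$,
\[
t(c_i)\ge r_k+\sum_{h<i}\tau(a_h)=r_k+\tau(\spathsym_k v_{i-1}),
\]
where $v_{i-1}$ is the tail of $a_i$. The prefix $\spathsym_k v$ is a walk from $o_k$ to $v$, so its length is at least $\dist[\tau]{o_k,v}$. Hence $t(c)\ge r_k+\dist[\tau]{o_k,v}=\edt{v}=\edt{a}$.

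\emph{Upper bound.} Condition~(iii) together with~(ii), applied by backward induction from $i=n$, gives
\[
t(c_i)+\tau(a_i)+\sum_{h>i}\tau(a_h)\le \ell_k .
\]
The suffix $v'\spathsym_k$ runs from $v'$ to $d_k$, so its length is at least $\dist[\tau]{v',d_k}$. Therefore $t(c)+\tau(a)\le \ell_k-\dist[\tau]{v',d_k}=\ldt{v'}$, that is, $t(c)\le\ldt{a}$.

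Since all values involved are integers, $t(c)$ lies in the integer interval $I_k(a)$. The only step needing care is the identification $c=c(a,k)$, which relies on $a$ appearing exactly once on $\spathsym_k$. This holds because $\spathsym_k$ is a path, and each consolidation containing $k$ corresponds to an arc of that path; everything else is routine telescoping.
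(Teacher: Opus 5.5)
Your proof is correct and follows essentially the same route as the paper's. Both arguments run forward along $\spathsym_k$ using conditions (i) and (ii) for the lower bound, run backward using (ii) and (iii) for the upper bound, and then place $c=c(a,k)$ on the path. The only cosmetic difference is that the paper applies the shortest-path triangle inequality at every step (e.g.\ $\edt{a_i}+\tau(a_i)\ge\edt{a_{i+1}}$), whereas you telescope the actual prefix and suffix lengths and compare with the shortest-path distance once at the end.
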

\begin{proof}
	Let $k\in\calK$ with $\spathsym_k = a_1\ldots a_n$ and $c_i=c(a_i,k)$.
	Then $t(c_1) \geq r_k = \edt{a_1}$ and, inductively, $t(c_{i+1}) \geq t(c_i) + \tau(a_i) \geq \edt{a_i} + \tau(a_i) \geq \edt{a_{i+1}}$ for $i\in (n)$.
	
	Similarly, $t(c_n) \leq \ell_k - \tau(a_n) = \ldt{a_n}$ and $t(c_i) \leq t(c_{i+1}) - \tau(a_i) \leq \ldt{a_{i+1}} - \tau(a_i) \leq \ldt{a_i}$ for $i\in(n)$ and the claim follows since for each $c=(a,\kappa)\in\conssym$ and $k\in\kappa$, $a\in\spathsym_k$.
\end{proof}

Returning to our example, we see that $\edt[k_2]{v_1} = 0$, $\edt[k_2]{v_2} = 2$, and $\edt[k_2]{v_3} = 4$.
Note that, if the network had shorter paths between these nodes, then these values would be lower, despite the solution using the longer paths.
In future illustrations, we only specify the bounds of interest and assume any omitted values are sufficiently loose, as may result from parts of the network that are not depicted.

%% file: sections/ddd-algo.tex
\section{A Dynamic Discretisation Discovery Algorithm}
\snd can be modelled as a mixed-integer program on a time-expanded network \cite{boland_continuous-time_2017}, though this quickly becomes intractable for instances with finer time resolutions \cite{boland_price_2019}.
Two alternative formulations that do not rely on a time expansion have been proposed \cite{hewitt_new_2023,shu_robust_2023}.
Both of them perform well on instances where commodities have only a small set of paths in the network to choose from.
But in general, the state-of-the-art approach for solving \snd is dynamic discretisation discovery with various later enhancements \parencite[e.g.,][]{boland_continuous-time_2017,helber_arc-based_2026,hewitt_enhanced_2019,marshall_interval-based_2021,shu_new_2026}.
In this section, we develop the algorithm outlined in~\cref{fig:ddd-flowchart} for \snd, which will form the basis for the rest of this work.

\subsection{The Relaxed Problem}
The centrepiece of a DDD algorithm is a relaxation of the problem, which should be significantly easier to solve.
We utilise a simplified version of the arc-based relaxation proposed by~\textcite{helber_arc-based_2026}, which is based on a discretisation of time for each arc in the network.

\begin{definition}
	A \emph{time discretisation} $\calT$ consists of a set $\Set{0,H}\subseteq \calT_a\subseteq[H]$ of time points for each arc $a\in \A{D}$.
	If $\calT_a = \Set{t_0,\ldots,t_n}$ with $0=t_0< t_1<\ldots<t_n = H$, then we define $\calT_a^I = \Set{[t_0,t_1),[t_1,t_2),\ldots,[t_{n-1},t_n)}$ as the intervals corresponding to the discretisation.
\end{definition}

The relaxed problem \rsnd{} is identical to \snd, with the exception that it takes a time discretisation \calT{} as an additional input and the notion of feasibility changes.
Specifically, instead of specifying the dispatch time for each consolidation, we only want to determine the time interval in which it is dispatched.
Formally, a solution $\solsym=(\pathsym,\conssym)$ is \emph{feasible} for R-SND or \emph{$\calT$-representable} if dispatch intervals $h\colon \conssym\to2^{\NN}$ exist with $h(c)\in\calT_{a(c)}^I$ such that, for all $k\in \calK$ with $\spathsym_k = a_1\ldots a_n$ and $c_i = c(a_i,k)$ for $i\in(n]$,
\begin{enumprop}
	\item $h(c_i)\cap I_k(a_i) \neq \emptyset$ for all $i\in(n]$, and \label{def:representable:nonemptyintervals}
	\item $\min h(c_i) + \tau(a_i) \leq \max h(c_{i+1})$ for all $i\in (n)$. \label{def:representable:linking}
\end{enumprop}
The first condition ensures that a commodity can only be part of a consolidation $c$ dispatched on arc $a$ in an interval $h(c)$ that at least overlaps with the interval $I_k(a)$ in which the commodity itself can traverse $a$.
The second condition ensures that consecutive consolidations of a commodity select consistent intervals.

If we recall our example in \cref{fig:snd-illustration}, then we could use the time discretisation $\calT_{v_1v_2} = \Set{0,1,H}$ and $\calT_{v_2v_3}=\Set{0,H}$.
With this discretisation, the solution is representable since we can assign the intervals $[1,H)$ and $[0,H)$ to the two consolidations, which satisfies both conditions.
This is an example of a discretisation that is too coarse to detect that the solution is non-implementable.
If we insert the time point $3$ into $\calT_{v_2v_3}$, then we must now select the interval $[0,3)$ for the second consolidation in order to satisfy the first condition since $I_{k_3}(v_2v_3)=[1,1]$.
The first consolidation must still use $[1,H)$ since $I_{k_1}(v_1v_2)=[1,1]$, so the second condition would require $1+2\leq 2$, which is false. Thus, adding this time point is enough to ensure that the solution also does not appear when solving the relaxation.

For the following discussion, it does not matter how exactly \rsnd is solved. 
But for completeness we give an integer program in \cref{appendix:IP}.

We do want to briefly note that the problem described here is, indeed, a relaxation.
\begin{lemma}
	\rsnd{} is a relaxation of \snd.
	For a full discretisation, it is tight.
\end{lemma}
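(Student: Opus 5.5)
The lemma has two parts, and I would handle them separately. For the relaxation part, I would show that every implementable solution is $\calT$-representable for any time discretisation $\calT$. Since the objective $c(\solsym)$ is identical in \snd and \rsnd, this gives that the optimal value of \rsnd is a lower bound on that of \snd. For tightness, I would take the full discretisation $\calT_a=[H]$ for all $a$ and show the converse: every representable solution is then implementable.

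\textbf{Relaxation.} Let $\solsym=(\pathsym,\conssym)$ be implementable with dispatch times $t$. For each $c\in\conssym$, define $h(c)$ as the unique interval of $\calT_{a(c)}^I$ containing $t(c)$. This interval exists because $t(c)\in[0,H)$: by \cref{intervals-implementable-dt}, $t(c)\in I_k(a)\subseteq[0,\ell_k]$ for any $k\in\kappa(c)$, and $\ell_k<H$. The same observation gives $t(c_i)\in h(c_i)\cap I_k(a_i)$, so condition \ref{def:representable:nonemptyintervals} holds. For \ref{def:representable:linking}, I would chain
\[
\min h(c_i)+\tau(a_i)\le t(c_i)+\tau(a_i)\le t(c_{i+1})\le\max h(c_{i+1}),
\]
using the implementability condition on consecutive dispatches. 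One small point to note is that consolidations are shared between commodities, but $h$ is defined per consolidation, so it is consistent across commodities.

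\textbf{Tightness.} With $\calT_a=[H]$, every interval in $\calT_a^I$ is a singleton $[t,t+1)=\{t\}$. Given a representable solution with intervals $h$, I would set $t(c)$ to the unique element of $h(c)$. Condition \ref{def:representable:linking} then reads $t(c_i)+\tau(a_i)\le t(c_{i+1})$, which is exactly the second implementability condition. Condition \ref{def:representable:nonemptyintervals} gives $t(c_i)\in I_k(a_i)$. For $i=1$ this yields $t(c_1)\ge\edt{a_1}=r_k$, because $\dist[\tau]{o_k,o_k}=0$. For $i=n$ it yields $t(c_n)\le\ldt{a_n}=\ell_k-\tau(a_n)$, because $\dist[\tau]{d_k,d_k}=0$. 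These are the first and third implementability conditions.

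The only step that needs real care is the boundary bookkeeping: confirming that dispatch times lie in $[0,H)$ so that $h$ is well defined, and that $I_k$ at the first and last arcs reduces exactly to $r_k$ and $\ell_k$. Once these checks are done, the rest is direct.
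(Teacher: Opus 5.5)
Your proposal is correct and follows essentially the same argument as the paper. For the relaxation you assign each consolidation the discretisation interval containing its dispatch time; for the full discretisation you read the dispatch time off the singleton interval and recover the implementability conditions via $\edt{a_1}=r_k$ and $\ldt{a_n}+\tau(a_n)=\ell_k$, and your extra check that $t(c)\in[0,H)$, so that $h(c)$ is well defined, is a detail the paper leaves implicit.
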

\begin{proof}
	Let $\solsym=(\pathsym,\conssym)$ be a solution and $\calT$ be a time discretisation.
	If $\solsym$ is implementable, there exist dispatch times $t\colon\conssym\to\NN$.
	For $c\in\conssym$, we let $h(c)$ be the interval in $\calT^I_{a(c)}$ containing $t(c)$.
	
	Let $k\in\calK$ with $\spathsym_k = a_1\ldots a_n$ and $c_i = c(a_i,k)$ for $i\in(n]$.
	The first property holds since $t(c_i) \in h(c_i)\cap I_k(a_i)$ by \cref{intervals-implementable-dt}.
	Finally, since $t(c_{i+1})\geq t(c_i) + \tau(a_i)$, we get $\min h(c_i) + \tau(a_i) \leq \max h(c_{i+1})$ as well.
	Thus, $\solsym$ is $\calT$-representable.
	
	If $\calT$ is the full discretisation and $\solsym$ is $\calT$-representable, then we get single element dispatch intervals $h(c)$ for $c=(a,\kappa)\in\conssym$ and we set $t(c)$ to the single element of $h(c)$.
	Since $h(c,k) \coloneqq h(c) \cap I_k(a) \neq \emptyset$ for all $k\in\kappa$, $t(c)$ is actually equal to the element in $h(c,k)$ as well. 
	
	Again, let $k\in\calK$ with $\spathsym_k = a_1\ldots a_n$ and $c_i = c(a_i,k)$ for $i\in(n]$.
	Then $t(c_1) = \min h(c_1,k) \geq \edt{a_1} = \edt{o_k} = r_k$, $t(c_{i+1}) = \max h(c_{i+1}) \geq \min h(c_{i}) + \tau(a_i) = t(c_i) + \tau(a_i)$, and $t(c_n) + \tau(a_n) = \max h(c_n,k) + \tau(a_n) \leq \ldt[k]{a_n} + \tau(a_n) = \ldt{d_k} = \ell_k$.
\end{proof}

\subsection{Implementable and Representable Solutions}
As we saw in the example, a solution $\solsym$ to \rsnd for some given discretisation $\calT$ need not be implementable. 
Recall that in this case we want to identify \emph{conflicts}, which are some structures of the solution that stand in the way of implementability, and \emph{fixes}, which are changes to the relaxation that prevent these conflicts from recurring.
The conflicts we study are the so-called \emph{too-long paths} in the \emph{dispatch-node graph} introduced by \textcite[][Definition 1]{shu_new_2026}. 
This graph contains nodes $\vdn{v}{k}$ for every node $v$ in each commodity's path $\spathsym_k$, and arcs between two nodes $\vdn{v}{k}$ and $\vdn{v'}{k'}$ exactly when commodities $k$ and $k'$ are consolidated on arc $vv'$.
\begin{definition}[Dispatch-node graph]
	Given a solution $\solsym=(\pathsym,\conssym)$, its \emph{dispatch-node graph} $D_\solsym$ is given by
	\begin{align*}
		\V{D_\solsym} &= \Set{\vdn{v}{k} : k\in\calK,\, v\in \spathsym_k}, \\ 
		\A{D_\solsym} &= \Set{\vdn{v}{k}\vdn{v'}{k'} : (vv',\kappa)\in \conssym,\, k,\,k'\in \kappa}.
	\end{align*}
	For $b=\vdn{v}{k}\vdn{v'}{k'}\in\A{D_\solsym}$, we write $a(b)=vv'$ for its underlying arc in $D$, and set $\tau(b) = \tau(a(b))$ and $\calT_b = \calT_{a(b)}$.
	For $u=\vdn{v}{k}\in\V{D_\solsym}$, we set $\edt[]{u} = \edt{v}$ and $\ldt[]{u}=\ldt{v}$.
\end{definition}
The dispatch-node graph for our running example from \cref{fig:snd-illustration} is shown in \cref{fig:dispatch-node-graph-illustration}.
\begin{figure}
	\centering
	\begin{tikzpicture}
		\node[draw] (v1k1) {$v_1,k_1$};
		\node[draw,below=1 of v1k1] (v1k2) {$v_1,k_2$};
		\node[draw,right=2 of v1k1] (v2k1) {$v_2,k_1$};
		\node[draw,right=2 of v1k2] (v2k2) {$v_2,k_2$};
		\node[draw,below=1 of v2k2] (v2k3) {$v_2,k_3$};
		\node[draw,right=2 of v2k2] (v3k2) {$v_3,k_2$};
		\node[draw,right=2 of v2k3] (v3k3) {$v_3,k_3$};
		
		\draw[->] (v1k1) to (v2k1);
		\draw[->] (v1k1) to (v2k2);
		\draw[->] (v1k2) to (v2k1);
		\draw[->] (v1k2) to (v2k2);
		
		\draw[->] (v2k2) to (v3k2);
		\draw[->] (v2k2) to (v3k3);
		\draw[->] (v2k3) to (v3k2);
		\draw[->] (v2k3) to (v3k3);
	\end{tikzpicture}
	\caption{The dispatch-node graph for the example from \cref{fig:snd-illustration}.
	\label{fig:dispatch-node-graph-illustration}}
\end{figure}

Looking at the example, we observe the following simple but useful property of the neighbourhood of a node in~$D_\solsym$.
\begin{observation}
	\label{ndg-in-out}
	Let $u=\vdn{v}{k}\in\V{D_\solsym}$. Then $v\in\spathsym_k$.
	Note that any arc $b=\vdn{v'}{k'}\vdn{v}{k}\in\Ain{u}$ originates from a consolidation $c=(v'v,\kappa)$ with $k\in\kappa$.
	By the definition of a solution, $v'v\in\spathsym_k$. In particular, if $v=o_k$, then $\Nin{u} = \emptyset$.
	Otherwise, let $v'$ be the predecessor of $v$ in $\spathsym_k$.
	Then $c=c(v'v,k)$ and $\Nin{u} = \Set{\vdn{v'}{k'} : k'\in\kappa}$.
	
	Similarly, $\Nout{u} = \emptyset$ if $v=d_k$ and otherwise $\Nout{u} = \Set{\vdn{v'}{k'} : k'\in\kappa}$, where $v'$ is the successor of $v$ in $\spathsym_k$ and $c(vv',k) = (vv',\kappa)$.
\end{observation}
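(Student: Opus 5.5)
The plan is to derive both descriptions directly from the definition of $\A{D_\solsym}$ together with the two defining properties of a solution: each commodity $k$ lies in exactly one consolidation on each arc of $\spathsym_k$, and in no consolidation on any other arc. I treat in-neighbours first; out-neighbours follow symmetrically.

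\emph{Step 1: in-arcs come from the path of $k$.} Fix $u=\vdn{v}{k}$ and an arc $b=\vdn{v'}{k'}\vdn{v}{k}\in\Ain{u}$. By the definition of $\A{D_\solsym}$, there is a consolidation $c=(v'v,\kappa)\in\conssym$ with $k,k'\in\kappa$. Since $k$ appears only in consolidations on arcs of $\spathsym_k$, we get $v'v\in\spathsym_k$.

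\emph{Step 2: the case $v=o_k$.} The path $\spathsym_k$ starts at $o_k$ and, being a path, never returns to it. Hence no arc of $\spathsym_k$ enters $o_k$, so no arc $v'v$ as in Step 1 exists, and $\Nin{u}=\emptyset$.

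\emph{Step 3: the case $v\neq o_k$.} Here $v$ has a unique predecessor $v'$ on $\spathsym_k$, again because a path repeats no nodes. Therefore the arc $v'v$ from Step 1 is forced to be this arc, and by uniqueness of the consolidation containing $k$ on it, $c=c(v'v,k)=(v'v,\kappa)$. This gives $\Nin{u}\subseteq\Set{\vdn{v'}{k'}:k'\in\kappa}$.

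For the reverse inclusion, take any $k'\in\kappa$. Then $k'$ is consolidated on $v'v$, so $v'v\in\spathsym_{k'}$. Thus both $\vdn{v'}{k'}$ and $\vdn{v}{k}$ are nodes of $D_\solsym$, and the arc between them lies in $\A{D_\solsym}$ by definition. This shows equality.

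\emph{Step 4: out-neighbours.} The same argument applies with successors in place of predecessors and $d_k$ in place of $o_k$.

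The only subtle point is the uniqueness of the predecessor (respectively successor) of $v$ on $\spathsym_k$. That uniqueness is exactly what rules out in-arcs coming from several different arcs of $D$, and it relies on $\spathsym_k$ being a path rather than a walk.
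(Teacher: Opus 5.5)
Your proof is correct and follows essentially the same argument the paper gives inline in the observation. An in-arc of $\vdn{v}{k}$ comes from a consolidation containing $k$ on an arc of $\spathsym_k$; because $\spathsym_k$ is a path, no such arc enters $o_k$, and otherwise it must be the unique arc $v'v$, with consolidation $c(v'v,k)$. Your version also spells out the reverse inclusion and the dependence on $\spathsym_k$ repeating no nodes, both of which the paper leaves implicit.
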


As described by~\textcite[][Section 7.2]{shu_new_2026}, a walk $\walksym$ from $u=\vdn{v}{k}$ to $u'=\vdn{v'}{k'}$ in $D_\solsym$ gives a lower bound $\edt[]{u} + \tau(\walksym)$ on the arrival time of $k'$ at $v'$ under the given sequence of consolidations.
If this is later than the latest time $\ldt[]{u'}$ at which $k'$ can be dispatched from $v'$, this evidently contradicts implementability of $\solsym$.
In \cref{fig:dispatch-node-graph-illustration}, this happens for the path $\vdn{v_2}{k_2}\vdn{v_3}{k_3}=u_1u_2$, which has $\edt[]{u_1}=2$ and $\ldt[]{u_2} = 3$, so its real arrival time is $2+2=4>3$.
This motivates the following definition:
\begin{definition}[Real arrival time and too-long paths]
	For a walk $\walksym=u_0\ldots u_n$ in $D_\solsym$ and $i\in [n]$, we define the real arrival time $\rdt{i} = \edt[]{u_0} + \tau(\walksym u_i)$ and call $\walksym$ a too-long path if $\rdt{n} > \ldt[]{u_n}$.
\end{definition}

\begin{remark}
    This definition differs slightly from that of~\textcite[][Definition 2]{shu_new_2026}; they require that a too-long path starts at a commodity's origin, i.e., $u = (o_k, k)$ for some $k \in \calK$. 
    In most respects, both definitions yield essentially the same results, but the subsequent discussion is easier to follow with our definition, since it allows us to disregard various edge cases.
	We note that to stay consistent with their terminology, we say too-long path even though walks are also permitted.
\end{remark}

Despite our slightly different definition of a too-long path, the following result transfers directly:
\begin{theorem}[{{\cite[][Theorem 2]{shu_new_2026}}}]
	\label{characterisation:implementable}
	A solution $\solsym$ is implementable if and only if the corresponding dispatch-node graph $D_\solsym$ contains no too-long path.
\end{theorem}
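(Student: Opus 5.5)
Both directions can be handled by comparing real arrival times along walks with the constraints on dispatch times. For the forward direction, let $\solsym$ be implementable with dispatch times $t\colon\conssym\to\NN$, and let $\walksym=u_0\ldots u_n$ be any walk in $D_\solsym$ with $u_i=\vdn{v_i}{k_i}$. Each arc $b_i=u_{i-1}u_i$ comes from a consolidation $c_i=(v_{i-1}v_i,\kappa_i)$ with $k_{i-1},k_i\in\kappa_i$. Our aim is to show $t(c_i)\geq \rdt{i-1}$ for all $i\in(n]$, by induction on $i$.

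For $i=1$ this is \cref{intervals-implementable-dt}: $t(c_1)\geq\edt[k_0]{v_0}=\rdt{0}$. For the step, $k_i$ lies in both $c_i$ and $c_{i+1}$, so by \cref{ndg-in-out} the consolidation $c_{i+1}$ is the successor of $c_i$ on $\spathsym_{k_i}$. The linking condition for $k_i$ then gives $t(c_{i+1})\geq t(c_i)+\tau(b_i)\geq\rdt{i}$.

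At the end of the walk, $k_n\in\kappa_n$, so \cref{intervals-implementable-dt} gives $t(c_n)\leq\ldt[k_n]{v_{n-1}v_n}=\ldt[]{u_n}-\tau(b_n)$. Combining, $\rdt{n}\leq t(c_n)+\tau(b_n)\leq\ldt[]{u_n}$, so $\walksym$ is not too long. The trivial walk $n=0$ also needs a check: $\edt{v}\leq\ldt{v}$ for $v\in\spathsym_k$ follows from existence of $t$ via \cref{intervals-implementable-dt}.

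For the converse, assume $D_\solsym$ has no too-long path. The natural candidate is the earliest-start schedule
\[
t(c)=\max\Set{\edt[]{u}+\dist[\tau]{u,u_c} : u\in\V{D_\solsym}},
\]
where the distance is taken in $D_\solsym$ and $u_c=\vdn{v}{k}$ for any $k\in\kappa(c)$ and $a(c)=vv'$. This is well defined because all nodes $\vdn{v}{k'}$ with $k'\in\kappa(c)$ have the same in-neighbourhood, the tail endpoints of the preceding consolidations. Equivalently, $t(c)$ is the maximum over walks ending at the tail of $c$ of their real arrival time.

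The main obstacle is that this maximum must be finite, since $D_\solsym$ may contain directed cycles. Any cycle has positive length because $\tau>0$, so traversing it repeatedly would eventually make a walk too long, as $\ldt[]{\cdot}$ is bounded. Hence no too-long walks implies acyclicity, and the maximum is over finitely many paths.

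It remains to check the three conditions. Condition (i) holds via the trivial walk at $\vdn{o_k}{k}$, giving $t(c_1)\geq r_k$. Condition (ii) holds because extending any walk into the tail of $c_i$ by the arc of $c_i$ gives a walk into the tail of $c_{i+1}$, adding $\tau(a_i)$. Condition (iii) holds because any maximising walk extended by the final arc into $\vdn{d_k}{k}$ is not too long, so $t(c_n)+\tau(a_n)\leq\ldt[k]{d_k}=\ell_k$. Since all quantities are integers, $t$ is $\NN$-valued, and $\solsym$ is implementable.
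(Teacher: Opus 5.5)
You follow the paper's plan: the forward direction is the paper's argument, and the converse uses acyclicity (repeat a positive-length cycle until the walk is too long) followed by an earliest-start schedule. The gap is in how you define that schedule.

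Your well-definedness claim is false. The nodes $\vdn{v}{k'}$ with $k'\in\kappa(c)$ share their \emph{out}-neighbourhood $\Set{\vdn{v'}{k''} : k''\in\kappa(c)}$, but not their in-neighbourhood. Commodities in $c$ may reach $v$ along different arcs and consolidations, or start at $v$. In \cref{fig:dispatch-node-graph-illustration}, $\vdn{v_2}{k_2}$ has in-neighbours $\vdn{v_1}{k_1}$ and $\vdn{v_1}{k_2}$, while $\vdn{v_2}{k_3}$ has none.

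So your $t(c)$ depends on the representative $u_c$, and a poor choice breaks the schedule. Take the instance of \cref{fig:snd-illustration} with $\ell_{k_3}$ raised to $8$; its dispatch-node graph is unchanged and contains no too-long path.
\begin{itemize}
\item Choosing $u_{c_1}=\vdn{v_1}{k_2}$ gives $t(c_1)=r_{k_2}=0<r_{k_1}$, which violates (i) for $k_1$.
\item Choosing $u_{c_2}=\vdn{v_2}{k_3}$ gives $t(c_2)=1<t(c_1)+2$, which violates (ii) for $k_2$.
\end{itemize}
Your checks of (i)--(iii) tacitly use walks ending at \emph{some} node $\vdn{v}{k}$ with $k\in\kappa(c)$, not at the fixed $u_c$.

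There is a second problem. $\dist[\tau]{u,u_c}$ denotes a \emph{shortest}-path length, so your displayed formula is not equivalent to the maximum real arrival time you describe right after it. With shortest distances, the extension argument for (ii) runs the wrong way: the tail of $c_{i+1}$ may be reached faster through a different in-neighbour.

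The repair is to define $t(c)$ as the maximum of $\rdt{n}$ over all walks $\walksym=u_0\ldots u_n$ in $D_\solsym$ ending at any node $\vdn{v}{k}$ with $k\in\kappa(c)$. This is exactly the paper's $t(c)=\max\Set{t(\vdn{v}{k}) : k\in\kappa}$. With it, your verifications of (i)--(iii) go through verbatim.

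Your route then slightly streamlines the paper's. The paper propagates $t$ forward from the sources $\vdn{o_k}{k}$ with value $r_k$. It then needs a separate induction to show that every $u$ has a walk from some $u'$ with $t(u)\leq\edt[]{u'}+\tau(\walksym)$. Defining $t$ directly as a maximum of real arrival times, as the paper itself does in the proof of \cref{characterisation:representable}, makes (iii) immediate.
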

\begin{proof}
	Let $\solsym=(\pathsym,\conssym)$ be implementable with dispatch times $t\colon\conssym\to\NN$.
	Moreover, let $\walksym=u_0\ldots u_n$ be a walk in $D_\solsym$ with $u_i=\vdn{v_i}{k_i}$, for $i\in[n]$ and $a_i=v_{i-1}v_i$, and $c_i = c(a_i,k_i)$ for $i\in(n]$.
	
	Since, for $i\in(n)$, $c_{i}$ and $c_{i+1}$ are consolidations on consecutive arcs of $\spathsym_{k_{i}}$, we get $t(c_{i+1}) \geq t(c_{i}) + \tau(a_i)$ and, thus, $t(c_n) + \tau(a_n) \geq t(c_1) + \tau(\walksym)$.
	By \cref{intervals-implementable-dt}, $t({c_1}) \geq \edt[k_0]{a_1} =  \edt[k_0]{v_0} = \edt[]{u_0}$ and $t(c_n)\leq \ldt[k_{n}]{a_n} = \ldt[k_n]{v_n} - \tau(a_n) = \ldt[]{u_n} - \tau(a_n)$, so $\walksym$ is not too-long.
	
	Conversely, assume that $D_\solsym$ has no too-long path.
	Then, in particular, $D_\solsym$ is acyclic.
	By \cref{ndg-in-out}, the nodes without incoming arcs are exactly the nodes $\vdn{o_k}{k}$ for $k\in\calK$, for which we define $t(\vdn{o_k}{k}) = r_k$.
	For the remaining nodes $u$, we set $t(u) = \max\Set{t(u') + \tau(u'u) : u'\in\Nin{u}}$, which is well-defined as $D_\solsym$ is acyclic.
	Finally, for $c=(vv',\kappa)\in\conssym$, we define $t(c) = \max\Set{t(\vdn{v}{k}) : k\in\kappa}$.
	
	We first note that, by \cref{ndg-in-out}, $\Nin{\vdn{v}{k}} = \Set{\vdn{v'}{k'} : k'\in \kappa}$, where $v'$ is the predecessor of $v$ in $\spathsym_k$ and $c=c(v'v,k) = (v'v,\kappa)$.
	In particular, $t(u) = t(c) + \tau(v'v)$.
	
	We now simply verify that $t$ satisfies the desired properties.
	Let $k\in\calK$ with $\spathsym_k = a_1\ldots a_n$, $c_i = c(a_i,k) = (a_i,\kappa_i)$, and $a_i=v_{i-1}v_i$ for $i\in(n]$.
	Then $t(c_1) \geq t(\vdn{o_k}{k}) = r_k$ and, for $i\in(n)$, $t(c_{i+1}) \geq t(\vdn{v_i}{k}) = t(c_i) + \tau(a_i)$.
	
	Finally, we show that for each $u\in D_\solsym$ there exists a $u'u$-path $\walksym$ such that $t(u)\leq \edt[]{u'} + \tau(\walksym)$.
	This implies the final property, since applied to $u=\vdn{v_n}{k}=\vdn{d_k}{k}$ we obtain $t(c_n) + \tau(a_n) = t(u) \leq \edt[]{u'} + \tau(\walksym) \leq \ldt[]{u} = \ell_k$.
	
	To see that such paths exist, we use the topological ordering we get from acyclicity.
	For the induction start, we know that $u=\vdn{o_k}{k}$ for some $k\in\calK$, for which $t(u) = r_k = \edt[]{u}$, so we can use the trivial path $\walksym=u$.
	In the induction step, we get that $t(u) = t(u'')+ \tau(u''u)$ for some $u''\in\Nin{u}$.
	By the induction hypothesis, we get a $u'u''$-path $\walksym'$ satisfying $t(u'') \leq \edt[]{u'} + \tau(\walksym')$.
	We extend this path by the arc $u''u$ to get a $u'u$-path~$\walksym$ satisfying $t(u) = t(u'')+\tau(u''u) \leq \edt[]{u'} + \tau(\walksym)$ as required.
\end{proof}

Note that the above construction directly gives a feasible timing if $\solsym$ is implementable.
Otherwise, we have identified a too-long path in the dispatch-node graph, and now need to find a fix for it, that is, an adjustment of the discretisation that gets rid of it.
More formally:
\begin{definition}
	Let $\solsym$ be a solution, $\walksym$ be a walk in $D_\solsym$, and $\calT$ be a time discretisation.
	We say $\walksym$ is \emph{eliminated} by $\calT$ if there exists no $\calT$-representable solution $\solsym'$ whose dispatch-node graph $D_{\solsym'}$ contains $\walksym$.
\end{definition}

In fact, since any too-long path makes a solution non-implementable, we want to eliminate \emph{all} too-long paths in $D_\solsym$.
Luckily, we get many of them for free, thanks to the following observation \cite[][Section 7.2]{shu_new_2026}.
\begin{observation}\label{obs:elimination_of_extensions}
	If a walk $\walksym$ is eliminated by $\calT$, then all extensions $\walksym'$ of $\walksym$ are also eliminated by $\calT$.
\end{observation}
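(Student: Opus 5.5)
The argument is by contraposition, using only the definition of elimination and the definition of the dispatch-node graph. Fix a solution $\solsym$, a walk $\walksym$ in $D_\solsym$ eliminated by $\calT$, and an extension $\walksym'$ of $\walksym$. Here an extension is a walk that contains $\walksym$ as a contiguous sub-walk, i.e.\ it arises from $\walksym$ by prepending and/or appending arcs. Suppose, for contradiction, that $\walksym'$ is not eliminated. Then there is a $\calT$-representable solution $\solsym'$ whose dispatch-node graph $D_{\solsym'}$ contains $\walksym'$: every node and every arc of $\walksym'$ lies in $D_{\solsym'}$, with the same underlying arcs $a(b)$ and commodities.

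The key step is to note that containment of a walk in a graph is inherited by its sub-walks. Since $\walksym = u_i\walksym' u_j$ for suitable indices $i\le j$, every node and every arc of $\walksym$ is a node or arc of $\walksym'$. Hence they all belong to $D_{\solsym'}$, and in the same order, so $\walksym$ is a walk in $D_{\solsym'}$. The quantities attached to nodes and arcs, namely $\edt[]{u}$, $\ldt[]{u}$, $\tau(b)$ and $\calT_b$, depend only on the pair $\vdn{v}{k}$ or on the underlying arc, not on the solution. So nothing subtle changes when we pass from $D_\solsym$ to $D_{\solsym'}$. Thus $\solsym'$ is a $\calT$-representable solution whose dispatch-node graph contains $\walksym$, which contradicts the assumption that $\walksym$ is eliminated by $\calT$.

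The only point needing care, and hence the main (mild) obstacle, is what \enquote{contains $\walksym$} means: the nodes and arcs of $\walksym$ must be present as identically labelled objects of $D_{\solsym'}$, i.e.\ pairs $\vdn{v}{k}$ and arcs between such pairs. With that reading fixed, the sub-walk argument is immediate and no properties of representability, such as conditions \ref{def:representable:nonemptyintervals} or \ref{def:representable:linking}, are needed. I would state this reading explicitly at the start and then give the two-line contrapositive argument above.
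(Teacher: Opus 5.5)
Your proposal is correct and matches the paper's proof. Both argue that if the extension $\walksym'$ were not eliminated, there would be a $\calT$-representable solution $\solsym'$ whose dispatch-node graph contains $\walksym'$, hence also its sub-walk $\walksym$, contradicting that $\walksym$ is eliminated; your extra remarks about node labels and solution-independent quantities are harmless but not needed.
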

\begin{proof}
	If $\walksym'$ is not eliminated, there exists a solution $\solsym'$ that is $\calT$-representable such that $\walksym'$ appears in $D_{\solsym'}$.
	But then $\walksym$ also appears in $D_{\solsym'}$.
\end{proof}

This motivates the following definition, similar to Definition 3 of \cite{shu_new_2026}, but again not requiring too-long paths to start at a commodity's origin:
\begin{definition}[Minimal too-long path]
	A too-long path is \emph{minimal} if none of its proper sub-walks are too-long.
\end{definition}
\begin{corollary}
	A solution $\solsym$ is implementable if and only if the corresponding dispatch-node graph $D_\solsym$ contains no minimal too-long path.
\end{corollary}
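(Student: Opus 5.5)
The corollary follows from \cref{characterisation:implementable} once we show that $D_\solsym$ contains a too-long path if and only if it contains a minimal too-long path.

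One direction is trivial, since a minimal too-long path is in particular a too-long path. So if $D_\solsym$ contains a minimal too-long path, it contains a too-long path, and \cref{characterisation:implementable} shows that $\solsym$ is not implementable.

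For the converse, suppose $\solsym$ is not implementable. By \cref{characterisation:implementable}, $D_\solsym$ contains some too-long path $\walksym$. Among all too-long sub-walks of $\walksym$, choose one, $\walksym^*$, with the fewest arcs. This is possible because the set of sub-walks of $\walksym$ is finite and contains at least $\walksym$ itself, so the set of too-long sub-walks is nonempty. Every proper sub-walk of $\walksym^*$ is also a sub-walk of $\walksym$ and has strictly fewer arcs. By the choice of $\walksym^*$, none of these proper sub-walks is too-long. Hence $\walksym^*$ is a minimal too-long path in $D_\solsym$.

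The only point needing care is that the notion of sub-walk is transitive: a contiguous sub-walk $u_i\walksym^* u_j$ of a contiguous sub-walk of $\walksym$ is again a contiguous sub-walk of $\walksym$. This holds directly from the notation $v_i\walksym v_j$.

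Note that taking a sub-walk changes the starting node $u_0$, and therefore the value $\edt[]{u_0}$ used in the real arrival time. This causes no difficulty, because the definition of too-long is applied to each sub-walk on its own terms. Minimality by arc count already handles everything. Since sub-walks are finite objects, there is no real obstacle in this argument.
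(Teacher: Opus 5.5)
Your proposal is correct and follows the paper's reasoning. The paper states the corollary without a separate proof, as an immediate consequence of \cref{characterisation:implementable} and the fact that every too-long path contains a minimal too-long sub-walk (a fact it also uses in the proof of \cref{criterion:tetlpg}); you justify that fact by choosing a too-long sub-walk with the fewest arcs.
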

Hence, it is, in fact, sufficient to eliminate all minimal too-long paths.
In our example from \cref{fig:snd-illustration,fig:dispatch-node-graph-illustration}, we already saw that $\walksym=\vdn{v_2}{k_2}\vdn{v_3}{k_3}$ is too-long; it is the unique minimal too-long path.
The walk $\walksym'=\vdn{v_1}{k_1}\vdn{v_2}{k_2}\vdn{v_3}{k_3}$ is also too-long and its real arrival time is $1+2+2=5>3$, but since it contains $\walksym$ as a subpath, it suffices to eliminate $\walksym$.

To actually eliminate the minimal too-long paths, we characterise representability in terms of too-long paths as well and use this to determine a sufficient criterion for a too-long path $\walksym$ to be eliminated.
Essentially, the idea is to check if the consolidations implied by $\walksym$ cause the last commodity to arrive too late even in the relaxation.
Since the relaxation specifies intervals instead of time points and the consistency requirement just asks consecutive intervals to contain time points that would allow the transition, we always select the earliest possible interval.

Returning to our example in \cref{fig:snd-illustration,fig:dispatch-node-graph-illustration} and using the same time discretisation $\calT_{v_1v_2} = \Set{0,1,H}$ and $\calT_{v_2v_3}=\Set{0,3,H}$ as before, the walk $\walksym=\vdn{v_2}{k_2}\vdn{v_3}{k_3}=u_1u_2$ is unproblematic for the relaxation.
We are available at time $\edt[]{u_1}=2$ at $u_1$, so the interval $[0,3)$ would be an option.
Since the relaxation does not distinguish between the points in this interval, we can take the earliest point in the interval, namely its lower bound, letting us (fictitiously) arrive at $u_2$ at time $2$, which is early enough to not conflict with $\ldt[]{u_2} = 3$.
Indeed, $\walksym$ is not eliminated: splitting $c_1$ into singleton consolidations allows us to assign $[1,H)$ to $k_1$ and $[0,1)$ to $k_2$ on the first arc, and $[0,3)$ to $c_2$, giving a representable solution that still contains $\walksym$.

However, if we extend this path backwards by $u_0 = \vdn{v_1}{k_1}$, then starting at $\edt[]{u_0}=1$ and using $[1,H)$ on $v_1v_2$ gives a relaxed arrival time of $3$ at $u_1$.
This requires us to use the interval $[3,H)$ on $v_2v_3$, so, even in the relaxed problem, we cannot arrive before time $5$ at $u_2$, which is too late since $\ldt[]{u_2} = 3$.
The following definition formalises this computation of the relaxed earliest arrival time.
\begin{definition}[Relaxed arrival times and relaxed too-long paths]
	\label{def:relaxed_arrival_times}
	Let $\walksym=u_0b_1u_1\ldots b_nu_n$ be a walk in $D_\solsym$ and $\calT$ be a time discretisation.
	We define the \emph{relaxed arrival time $\rat{i}$} at the $i$th node of $\walksym$ by
	\begin{displaymath}
		\rat{i} = 
		\begin{cases}
			\edt[]{u_0} & \text{if $i=0$,}\\
			\rounddown{\rat{i-1}}{b_{i}} +\tau(b_{i}) & \text{otherwise}
		\end{cases}
	\end{displaymath}
	where $\rounddown{t}{a} = \max\Set{t'\in\calT_a : t'\leq t}$.
	If $\rat{n} > \ldt[]{u_n}$, we say \(\walksym\) is $\calT$-\emph{relaxed too-long}.
\end{definition}
\begin{remark}\label{remark:relaxed_arrival_time_definition}
	In the short paper~\cite{Wullner2026-xl} we say a too-long path is eliminated if and only if it fulfils this criterion, which is not consistent with the terminology in this paper.
	The idea behind this criterion was already used by~\textcite[][Theorem 3]{helber_arc-based_2026} to check if too-long paths are already eliminated.
	A stronger variant of the criterion is also introduced in the newer version of the preprint~\textcite[][Lemma~2]{shu_new_2026}.
	We discuss how our contributions can be adapted to their criterion in~\cref{sec:variants}, but do not use it for our exposition since it increases notational burden and edge cases without yielding more interesting results.
\end{remark}

Conveniently, relaxed too-long paths are obstacles to representability, just as too-long paths are to implementability.
\begin{theorem}
	\label{characterisation:representable}
	Let $\calT$ be a time discretisation.
	A solution $\solsym$ is $\calT$-representable if and only if $D_\solsym$ contains no $\calT$-relaxed too-long path.
\end{theorem}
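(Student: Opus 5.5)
The plan is to prove both directions directly from the definitions: representability gives a lower bound on the chosen intervals along every walk, and conversely the largest relaxed arrival times define intervals. Throughout, for a walk $\walksym=u_0b_1u_1\ldots b_nu_n$ in $D_\solsym$ with $u_i=\vdn{v_i}{k_i}$, let $c_i$ be the consolidation from which $b_i$ originates. By \cref{ndg-in-out}, $c_i=c(a(b_i),k_{i-1})=c(a(b_i),k_i)$. Hence $c_i$ and $c_{i+1}$ are consolidations on consecutive arcs of $\spathsym_{k_i}$. The one elementary fact I will use repeatedly is this: if $h=[t_j,t_{j+1})\in\calT^I_a$ and $x\le\max h$, then $x<t_{j+1}$, so $\rounddown{x}{a}\le t_j=\min h$.

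\emph{Representable $\Rightarrow$ no relaxed too-long path.} Fix dispatch intervals $h$ and a walk $\walksym$ as above. I will show by induction on $i\in(n]$ that $\rounddown{\rat{i-1}}{b_i}\le\min h(c_i)$.
\begin{itemize}
\item Base case: by \ref{def:representable:nonemptyintervals}, $h(c_1)\cap I_{k_0}(a(b_1))\neq\emptyset$, so $\max h(c_1)\ge\edt[k_0]{v_0}=\rat{0}$. The elementary fact then gives the claim.
\item Step: $\rat{i}=\rounddown{\rat{i-1}}{b_i}+\tau(b_i)\le\min h(c_i)+\tau(b_i)\le\max h(c_{i+1})$ by \ref{def:representable:linking}. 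The elementary fact again gives the claim for $i+1$.
\end{itemize}
At the end of the walk, $\rat{n}\le\min h(c_n)+\tau(b_n)$. Condition \ref{def:representable:nonemptyintervals} for $k_n$ gives $\min h(c_n)\le\ldt[k_n]{a(b_n)}=\ldt[]{u_n}-\tau(b_n)$. Combining these, $\rat{n}\le\ldt[]{u_n}$, so $\walksym$ is not relaxed too-long.

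\emph{No relaxed too-long path $\Rightarrow$ representable.} For $u\in\V{D_\solsym}$, define $p(u)=\max\Set{\rat{n} : \walksym \text{ a walk ending in } u}$. I expect the main obstacle to be that $D_\solsym$ may contain cycles, so the maximum ranges over infinitely many walks; unlike in \cref{characterisation:implementable}, we cannot argue acyclicity. The fix is that the maximum is still attained. Every value $\rat{n}$ is an integer, is at least $0$, and is at most $\ldt[]{u}$, since no walk is relaxed too-long. So the maximum is taken over a finite set of integers and is attained. Moreover, the trivial walk gives $p(u)\ge\edt[]{u}$.

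With $p$ in hand, define the intervals as follows. For $c=(vv',\kappa)\in\conssym$, let $q(c)=\max_{k\in\kappa}p(\vdn{v}{k})$, and let $h(c)$ be the interval of $\calT^I_{vv'}$ containing $q(c)$. This interval exists because $q(c)\le\ldt[k]{v}<H$. Note that $\min h(c)=\rounddown{q(c)}{vv'}$. The key monotonicity step is the following. Take a walk attaining $q(c)=p(\vdn{v}{k'})$ and append any arc $\vdn{v}{k'}\vdn{v'}{k}$ with $k\in\kappa$. The resulting walk has relaxed arrival time $\min h(c)+\tau(vv')$, which gives
\begin{displaymath}
	\min h(c)+\tau(vv')\le p(\vdn{v'}{k}) \le \ldt[k]{v'} .
\end{displaymath}

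It remains to check the two conditions of representability.
\begin{itemize}
\item Condition \ref{def:representable:nonemptyintervals}: the display gives $\min h(c)\le\ldt[k]{vv'}$. In the other direction, $\max h(c)\ge q(c)\ge p(\vdn{v}{k})\ge\edt[k]{vv'}$. Together these show $h(c)\cap I_k(vv')\neq\emptyset$.
\item Condition \ref{def:representable:linking}: let $c_i,c_{i+1}$ be consecutive consolidations on $\spathsym_k$ meeting at $v_i$. The display gives $\min h(c_i)+\tau(a_i)\le p(\vdn{v_i}{k})$. Since $k\in\kappa(c_{i+1})$, we have $p(\vdn{v_i}{k})\le q(c_{i+1})\le\max h(c_{i+1})$.
\end{itemize}
Hence $\solsym$ is $\calT$-representable.
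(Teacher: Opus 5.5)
Your argument is essentially the paper's proof. The forward direction is the same induction: you bound $\rounddown{\rat{i-1}}{b_i}$ by $\min h(c_i)$ where the paper bounds $\rat{i-1}$ by $\max h(c_i)$, which is equivalent. The converse uses the same maximal relaxed arrival times, the same $h(c)$ (the interval containing the maximum over $\kappa$), and the same one-arc extension. Your remark that the maximum is attained, because the values are integers in $[0,\ldt[]{u}]$, is more precise than the paper's ``this value is finite''.

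\textbf{The gap.} Your verification of condition~\ref{def:representable:nonemptyintervals} in the converse fails in a corner case, and the paper's proof makes the identical inference. From $\min h(c)\le\ldt[k]{vv'}$ and $\max h(c)\ge\edt[k]{vv'}$ you may conclude $h(c)\cap I_k(vv')\neq\emptyset$ only if $I_k(vv')$ is itself nonempty. Having no relaxed too-long path does not guarantee this, because the relaxed arrival time rounds $\edt[k]{v}$ down.

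\textbf{A counterexample.} Route a single commodity $k$ with $r_k=0$ and $\ell_k=3$ along $\spathsym_k=o\,x\,d$, where $\tau(ox)=1$ and $\tau(xd)=3$. Let the network also contain a detour $x\,w\,d$ of length $2$, and take $\calT_{ox}=\calT_{xd}=\Set{0,H}$ with $H=4$.
\begin{itemize}
\item The windows are $\edt[k]{o}=\ldt[k]{o}=0$, $\edt[k]{x}=\ldt[k]{x}=1$ and $\edt[k]{d}=\ldt[k]{d}=3$.
\item The relaxed arrival times are $0,1,3$ along $\vdn{o}{k}\vdn{x}{k}\vdn{d}{k}$ and $1,3$ along $\vdn{x}{k}\vdn{d}{k}$. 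So $D_\solsym$ has no $\calT$-relaxed too-long path.
\item For $c=(xd,\Set{k})$ your construction gives $h(c)=[0,H)$. It satisfies $\min h(c)=0\le\ldt[k]{xd}=0$ and $\max h(c)=H-1=3\ge\edt[k]{xd}=1$, exactly your two inequalities.
\item However, $\edt[k]{xd}=1>0=\ldt[k]{xd}$, so $I_k(xd)=\emptyset$ and $\solsym$ is not $\calT$-representable at all.
\end{itemize}

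\textbf{The fix.} The statement needs an extra hypothesis, for example that $\edt[k]{a}\le\ldt[k]{a}$ for every $k\in\calK$ and every arc $a$ of $\spathsym_k$. This costs nothing. Condition~\ref{def:representable:nonemptyintervals} forces it for every representable solution, and the IP of \cref{appendix:IP} builds it in by deleting such arcs from $D_{\calT,k}$. Under this hypothesis, two nonempty integer intervals satisfying your two inequalities do intersect, and the rest of your proof is complete.

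\textbf{A smaller omission}, also shared with the paper: your forward induction only covers walks with $n\ge 1$. For a trivial walk $\vdn{v}{k}$ you still need $\edt[k]{v}\le\ldt[k]{v}$. This follows from condition~\ref{def:representable:nonemptyintervals} applied to an arc of $\spathsym_k$ incident to $v$, together with the triangle inequality for shortest-path distances.
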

\begin{proof}
	Let $\solsym=(\pathsym,\conssym)$ be $\calT$-representable with dispatch intervals $h\colon \conssym\to2^\NN$.
	Moreover, let $\walksym=u_0\ldots u_n$ be a walk in $D_\solsym$ with $u_i = \vdn{v_i}{k_i}$ for $i\in[n]$ and $a_i=v_{i-1}v_i$, $c_i=c(a_i,k_i)=c(a_i,k_{i-1})$ for $i\in(n]$.
	
	We show that $\rat{i-1}\leq \max h(c_{i})$, for all $i\in(n]$.
	For $i=1$, since $h(c_1)\cap I_{k_0}(a_1)\neq\emptyset$, $\max h(c_1) \geq \edt[k_0]{a_1} = \edt[]{u_0} = \rat{0}$.
	For $i\in(n)$, using the induction hypothesis and the fact that $\solsym$ is $\calT$-representable, $\rat{i} = \rounddown{\rat{i-1}}{a_i}  + \tau(a_{i})\leq \rounddown{\max h(c_{i})}{a_i} + \tau(a_{i}) = \min h(c_{i}) + \tau(a_i) \leq \max h(c_{i+1})$.
	
	Since $h(c_n)\cap I_{k_n}(a_n)\neq\emptyset$, $\min h(c_n)\leq \ldt[k_n]{a_n} = \ldt[]{u_n}-\tau(a_n)$ and, consequently, $\rat{n} = \rounddown{\rat{n-1}}{a_n} + \tau(a_n) \leq \rounddown{\max h(c_{n})}{a_n} + \tau(a_n) = \min h(c_n) + \tau(a_n) \leq \ldt[]{u_n}$.
	Thus, $\walksym$ is not relaxed too-long.
	
	For the converse, we assume that $D_\solsym$ contains no relaxed too-long path.
	This lets us define a dispatch time at every node $u$ in $D_\solsym$ by setting
	\begin{displaymath}
		t(u)=\max\Set{\rat{n} : \walksym \text{ is a walk in $D_\solsym$ ending at $u$}}.
	\end{displaymath}
	Note that we explicitly allow walks of length 0 here, meaning $t(u) \geq \edt[]{u}$.
	Also, since no relaxed too-long paths exist, this value is finite and satisfies $t(u) \leq \ldt[]{u}$.
	For $c=(vv',\kappa)$, we let $t(c)=\max\Set{t(\vdn{v}{k}) : k\in\kappa}$ and $h(c)$ be the interval in $\calT^I_{vv'}$ containing $t(c)$.
	We use this to show that $\solsym$ is $\calT$-representable.
	
	To this end, let $k\in \calK$ with $\spathsym_k = a_1\ldots a_n$, $c_i = c(a_i,k)=(a_i,\kappa_i)$, and $a_i=v_{i-1}v_i$ for $i\in(n]$.
	Furthermore, let $u_i=\vdn{v_i}{k}$ for $i\in[n]$.
	By definition, for $i\in(n]$, $\max h(c_i) \geq t(c_i) \geq t(u_{i-1}) \geq \edt[]{u_{i-1}} = \edt{a_i}$.
	
	Again, let $i\in (n]$, and let $u'=(v_{i-1},k')$ with $k'\in\kappa_i$ such that $t(c_i) = t(u')$.
	Moreover, let $\walksym$ be a walk of length $m\geq 0$ in $D_\solsym$ ending at $u'$ with $\rat{m} = t(u') = t(c_i)$.
	By extending $\walksym$ to $u_i$, we get a walk $\walksym'$ ending at $u_i$ and, thus, $t(u_i) \geq \rat[\walksym']{m+1} = \rounddown{t(c_i)}{a_i} + \tau(a_i)$.
	Since $\walksym'$ is not relaxed too-long, $\rat[\walksym']{m+1} \leq \ldt[]{u_i} = \ldt{a_i}+\tau(a_i)$.
	Hence, $\min h(c_i) = \rounddown{t(c_i)}{a_i} \leq \ldt{a_i}$ and $h(c_i)\cap I_k(a_i)\neq \emptyset$.

	For the final property, let $i\in(n)$.
	Using the above computations, we see that $\min h(c_i) + \tau(a_i) = \rounddown{t(c_i)}{a_i} + \tau(a_i) \leq t(u_i) \leq t(c_{i+1}) \leq \max h(c_{i+1})$ and the proof is complete.
\end{proof}
We point out that we cannot restrict ourselves to minimal too-long paths in~\cref{characterisation:representable}, since our example showed that a solution can be non-representable because a non-minimal too-long path is relaxed too-long, but none of the minimal ones are.

Evidently, if we want to make a non-implementable solution non-representable, we need to select a discretisation that makes at least one walk $\walksym$ in $D_\solsym$ relaxed too-long.
Since, by definition, $\rat{i}\leq \rdt{i}$ for all $i$, such a walk must be too-long.
Furthermore, relaxed too-long paths are guaranteed not to reappear in representable solutions, leading to the following natural criterion for elimination.
\begin{observation}\label{obs:eliminated_if_rtl}
	A too-long path $\walksym$ is eliminated by a discretisation $\calT$ if it is $\calT$-relaxed too-long.
\end{observation}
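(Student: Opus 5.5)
The observation should follow almost immediately from \cref{characterisation:representable}, argued by contraposition. Suppose $\walksym$ is $\calT$-relaxed too-long but not eliminated by $\calT$. Then by the definition of elimination there is a $\calT$-representable solution $\solsym'$ whose dispatch-node graph $D_{\solsym'}$ contains $\walksym$ as a walk. By \cref{characterisation:representable}, $D_{\solsym'}$ contains no $\calT$-relaxed too-long path, so $\walksym$ in particular is not $\calT$-relaxed too-long in $D_{\solsym'}$. This is the desired contradiction, provided relaxed too-longness is a property of the walk alone.

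The one step that needs care is therefore checking that being $\calT$-relaxed too-long is intrinsic to the walk $\walksym$, rather than to the ambient solution. Concretely, the relaxed arrival times $\rat{i}$ of \cref{def:relaxed_arrival_times} should depend only on data read off from $\walksym$ and $\calT$, so that the computation in $D_\solsym$ and in $D_{\solsym'}$ gives the same values. I will verify this in three parts:
\begin{itemize}
	\item The start value $\rat{0} = \edt[]{u_0}$ depends only on the node $u_0 = \vdn{v}{k}$ and the instance data.
	\item Each step uses $\calT_{b_i} = \calT_{a(b_i)}$ and $\tau(b_i) = \tau(a(b_i))$. An arc of either graph between $\vdn{v}{k}$ and $\vdn{v'}{k'}$ always has underlying arc $vv'$, so both quantities are determined by the endpoints of $b_i$.
	\item The final comparison uses $\ldt[]{u_n}$, which again depends only on the node $u_n$.
\end{itemize}
Hence the sequence $\rat{0},\ldots,\rat{n}$ is the same whether $\walksym$ is viewed in $D_\solsym$ or in $D_{\solsym'}$, and so is the truth of $\rat{n} > \ldt[]{u_n}$.

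The hypothesis that $\walksym$ is too-long is not needed for the argument itself. It only ensures that the statement is non-vacuous, since $\rat{i} \leq \rdt{i}$ means that a relaxed too-long walk is automatically too-long. I expect no real obstacle. The only subtlety is the node-and-arc-identification point above, namely that "$D_{\solsym'}$ contains $\walksym$" means the same node and arc sequence, with identical underlying arcs in $D$.
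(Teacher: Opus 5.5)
Your argument is correct and matches the paper's reasoning: the paper justifies the observation in one line by \cref{characterisation:representable}, since the dispatch-node graph of a $\calT$-representable solution contains no $\calT$-relaxed too-long path, so $\walksym$ cannot occur in any such graph. Your extra check that $\rat{0},\ldots,\rat{n}$ and $\ldt[]{u_n}$ depend only on the walk, on $\calT$ and on the instance data, and not on the surrounding solution, is a step the paper leaves implicit, and you carry it out correctly.
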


\begin{example}
We demonstrate that the converse of~\cref{obs:eliminated_if_rtl} is not true; a too-long path may be eliminated by \(\calT\) even if it is not \(\calT\)-relaxed too-long.
	For non-minimal paths this follows directly from~\cref{obs:elimination_of_extensions}, but it is not at all obvious that the same is true for minimal paths.
	Consider the example in \cref{fig:eliminated_but_not_rtl}, especially the minimal too-long path $\walksym$.
	We select a discretisation $\calT$ that is full (i.e., $\calT_a = [H]$) except that $5 \notin \calT_{v_2v_3}$ and $6 \notin \calT_{v_3v_4}$.
	Then $\walksym$ is not relaxed too-long, since $\rat{0} = 2, \rat{1} = 3, \rat{2} = 5, \rat{3} = 6, \rat{4} = 7$, and $\rat{5} = 9 \not> 9 = \ell_{k_1}$.
	For this path to exist in a solution's dispatch-node graph, commodity $k_1$ must traverse $v_0v_1$ together with $k_3$, as well as $v_1v_2$ and $v_4v_5$.
	Taking $v_2v_5$ would prevent it from traversing the required arc $v_4v_5$, so it must take $v_2v_3$.
	This implies that $\walksym'$ also exists in the dispatch-node graph, and this path is relaxed too-long, since $\rat[\walksym']{0} = 2, \rat[\walksym']{1} = 3, \rat[\walksym']{2} = 5, \rat[\walksym']{3} = 6 > 5 = \ldt[k_1]{v_3}$.
	Since existence of $\walksym$ implies $\walksym'$ and this path is relaxed too-long, both of them are eliminated.
	
	\begin{figure}[htbp]
		\centering
		\begin{tikzpicture}[>=stealth]
			\node[draw, label={[align=left]above left:{$r_{k_1}=r_{k_2}=0$\\$r_{k_3}=2$}}] (v0) at (0,0) {$v_0$};
			\node[draw] (v1) at (2,0) {$v_1$};
			\node[draw] (v2) at (4,0) {$v_2$};
			\node[draw] (v3) at (6,0) {$v_3$};
			\node[draw] (v4) at (8,0) {$v_4$};
			\node[draw, label={[align=left]above right:{$\ell_{k_1}=9$\\$\ell_{k_2}=12$\\$\ell_{k_3}=12$}}] (v5) at (10,0) {$v_5$};
			
\draw[->, thick] (v0) to node[above]{1} (v1);
			\draw[->, thick] (v1) to node[above]{2} (v2);
			\draw[->, thick] (v2) to node[above]{2} (v3);
			\draw[->, thick] (v3) to node[above]{2} (v4);
			\draw[->, thick] (v4) to node[above]{2} (v5);
			\draw[->] (v0.north) .. controls +(0,2) and +(0,2) ..
			node[above]{2} (v4.north);
			\draw[->] (v2.south) .. controls +(0,-1) and +(0,-1) ..
			node[below]{4} (v5.south);
			
			\node[align=left, anchor=north west, draw, label={[shift=(solbox.north west)]above right:Solution $\solsym$}] (solbox) at (-1.5,-2) {
				$\spathsym_{k_1}=\spathsym_{k_2}=v_0v_1v_2v_3v_4v_5$\\
				$\spathsym_{k_3}=v_0v_1v_2v_5$\\
				$c_1 = (v_0v_1, \Set{k_1,k_3})$\\
				$c_2 = (v_1v_2, \Set{k_1,k_2})$\\
				$c_3 = (v_4v_5, \Set{k_1,k_2})$
			};
			\node[align=left, label={[shift=(tlps.north west)]above right:Too-long paths}, draw, anchor=north west, xshift=5mm] (tlps) at (solbox.north east) {
				$\walksym=(v_0,k_3)(v_1,k_1)(v_2,k_2)(v_3,k_2)(v_4,k_2)(v_5,k_1)$\\
				$\walksym' = (v_0,k_3)(v_1,k_1)(v_2,k_1)(v_3,k_1)$
			};
		\end{tikzpicture}
		\caption{Example of a minimal too-long path that is eliminated by $\calT$ but not $\calT$-relaxed too-long. Arcs are labelled with travel time. Commodities' origins and destinations are labelled with their release time and deadline.
		All unlisted consolidations are singletons.
		}
		\label{fig:eliminated_but_not_rtl}
	\end{figure}
\end{example}
So we see that selecting $\calT$ so that all minimal too-long paths are $\calT$-relaxed too-long may be too conservative; a smaller discretisation may already eliminate them due to additional routing considerations.
Finding a practical characterisation of an eliminated too-long path is an interesting open question.
But we suspect that in practice, the increase in discretisation size from using our sufficient characterisation will usually be negligible.

%% file: sections/refinement.tex
\section{The Optimal Refinement Problem}

Now that we have identified conflicts (minimal too-long paths) and potential fixes (making them relaxed too-long), we can now define the problem studied in the rest of this work:
Given a solution $\solsym$ to \snd, the \emph{optimal refinement problem} (ORP) is to find a discretisation \(\calT\) of minimum size $\textstyle\sum_{a \in \A{D}} \abs{\calT_a}$ such that all minimal too-long paths in $D_\solsym$ are $\calT$-relaxed too-long.

The objective of minimising the number of time points is motivated by the wish to reduce the size of the integer programming models used to solve \rsnd.
\begin{remark}
	In the short paper \cite{Wullner2026-xl}, we show that under this objective function, the problem of making an arbitrary set of too-long paths also relaxed too-long is NP-hard. 
	We note that it is possible to construct instances of \snd such that the hardness result transfers to ORP as stated here, but omit the construction, which provides little additional insight.
\end{remark}

This section is devoted to solving the ORP.
First, we look at properties of minimal too-long paths in \cref{subsec:min-tlp} before showing how the ORP can be solved if one has explicitly enumerated all minimal too-long paths in \cref{subsec:pathbasedapproaches}.
An algorithm that enumerates all these paths can be found in Section 7.4 of~\cite{shu_new_2026} (and can be easily adapted to our definition of a too-long path).
We then demonstrate that this approach can be unattractive since the number of such paths may be exponential in instance parameters.
This motivates us to develop a more compact representation of all minimal too-long paths in \cref{subsec:graph-criterion} and corresponding solution approaches in \cref{subsec:constructing-tetlpg,subsec:solving-ORP-graph,subsec:solving-ORP-DDD}.

For the remainder of the section, let $\calT$ denote a time discretisation and $\tlpsymb$ the set of all minimal too-long paths in $D_\solsym$.

\subsection{Properties of Minimal Too-Long Paths}\label{subsec:min-tlp}
For everything that happens later in this section, from formulating an integer program for the ORP to developing a sufficient criterion for eliminating all minimal too-long paths, it is helpful to know certain properties that these minimal too-long paths have.
We use this subsection to collect several such properties.

First, we note that if node $v$ is visited by commodity $k$, then this commodity needs to arrive at $v$ at some point in the interval $[\edt{v},\ldt{v}]$.
Thus, this interval describes a certain amount of slack that is available at the node.
If we deviate from the lower bound $\edt{v}$ we have used up some of the available slack, and if we exceed the upper bound $\ldt{v}$, then we are late, which happens at the end of too-long paths.
Let us formally define these values.
\begin{definition}[Slack consumption and lateness]
	Let $u\in\V{D_\solsym}$, $t\in \NN$, and $w=(u,t)$.
	We define the \emph{slack consumption} and \emph{lateness} at $w$ by 
	\begin{equation*}
		\delay(w) = t - \edt[]{u}, \qquad \exc(w) = t - \ldt[]{u}.
	\end{equation*}
\end{definition}

Minimal too-long paths are only late at their last node.
Since starting such a path at a later node lets it finish earlier by exactly its slack consumption, this value must be at least equal to the lateness at all but the first node (where it is zero), otherwise the path would not be minimal.
Formally, we obtain the following observation.
\begin{observation}
	\label{used-buffer-lateness-minimal-tlp}
	If $\walksym=u_0\ldots u_n$ is a minimal too-long path and $w_i=(u_i,\rdt{i})$, then the following properties hold:
	\begin{itemize}
		\item $\exc(w_n) > 0$ and $\exc(w_i)\leq 0$ for all $i\in[n)$.
		\item $\delay(w_0) = 0$ and $\delay(w_i) \geq \exc(w_n) > 0$ for all $i\in(n]$.
	\end{itemize}
\end{observation}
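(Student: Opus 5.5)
The plan is to argue directly from minimality, comparing $\walksym$ with its proper sub-walks. There are two ways to get a proper sub-walk: truncate the end, which gives $\walksym u_i$ for $i<n$, or drop a prefix, which gives $u_i\walksym$ for $i>0$. Each of these must fail to be too-long.

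The first bullet comes from the truncated sub-walks. Since $\walksym$ is too-long, $\rdt{n}>\ldt[]{u_n}$, i.e.\ $\exc(w_n)>0$. For $i\in[n)$, the sub-walk $\walksym u_i$ starts at $u_0$, so its real arrival times coincide with those of $\walksym$. In particular its final real arrival time is $\rdt{i}$. If $\rdt{i}>\ldt[]{u_i}$, this proper sub-walk would be too-long, contradicting minimality. Hence $\exc(w_i)\le 0$. Note that $i=0$ is also covered here, since the trivial walk $u_0$ is a proper sub-walk of $\walksym$ whenever $n\ge1$. Moreover $n\ge 1$ must hold, because otherwise $\rdt{0}=\edt[]{u_0}\le \ldt[]{u_0}$ by the assumed well-definedness of the bounds (a commodity's window at a node on its path being nonempty).

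The second bullet comes from the suffix sub-walks. First, $\delay(w_0)=\rdt{0}-\edt[]{u_0}=0$ directly from the definition. For $i\in(n]$, consider the suffix $\walksym'=u_i\walksym$, which is proper. Its real arrival time at $u_n$ is $\edt[]{u_i}+\tau(u_i\walksym)$. Using $\rdt{n}=\rdt{i}+\tau(u_i\walksym)$, this equals $\rdt{n}-(\rdt{i}-\edt[]{u_i})=\rdt{n}-\delay(w_i)$. Minimality says $\walksym'$ is not too-long, so $\rdt{n}-\delay(w_i)\le\ldt[]{u_n}$, which rearranges to $\delay(w_i)\ge\exc(w_n)$. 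The remaining inequality $\exc(w_n)>0$ then follows from the first bullet.

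I expect no real obstacle; this is bookkeeping. The only points to check carefully are the additivity identity $\rdt{n}=\rdt{i}+\tau(u_i\walksym)$, which is immediate from the definition of $\rdt{\cdot}$, and the edge cases $i=0$ and $i=n$. For $i=n$ the suffix is the trivial walk $u_n$, and the argument still applies because $\edt[]{u_n}\le\ldt[]{u_n}$, giving $\delay(w_n)\ge\exc(w_n)$ as required.
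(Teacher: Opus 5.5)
Your proof is correct and follows the paper's argument: prefixes $\walksym u_i$ give the lateness bounds, and suffixes $u_i\walksym$, via $\rdt{n}=\rdt{i}+\tau(u_i\walksym)$, give $\delay(w_i)\ge\exc(w_n)$. Your detour establishing $n\ge1$ from $\edt[]{u_0}\le\ldt[]{u_0}$ is unnecessary, since for $n=0$ the index ranges $[n)$ and $(n]$ are empty. Likewise, for $i=n$ minimality alone already rules out the trivial walk $u_n$ being too-long.
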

\begin{proof}
	For the lateness, $\exc(w_n) > 0$ since $\walksym$ is too-long and, for $i\in[n)$, $\exc(w_i) \leq 0$ since $\walksym u_i$ is not too-long.
	
	For the slack consumption, $\delay(w_0) = 0$ since $\rdt{0} = \edt[]{u_0}$, and, for $i\in(n]$, $u_i\walksym$ is not too-long, so $\edt[]{u_i} + \tau(u_i\walksym) \leq \ldt[]{u_n} = \rdt{n} - \exc(w_n) = \edt[]{u_0} + \tau(\walksym) -\exc(w_n) = \rdt{i} + \tau(u_i\walksym) - \exc(w_n)$.
	Thus, $\rdt{i} \geq \edt[]{u_i} +\exc(w_n)$ and $\delay(w_i) \geq \exc(w_n)$.
\end{proof}

In fact, these properties are actually equivalent.
\begin{lemma}
	\label{used-buffer-lateness-minimal-tlp-equiv}
	If $\walksym=u_0\ldots u_n$ is a walk in $D_\solsym$ and $w_i=(u_i,\rdt{i})$, then $\walksym$ is minimal too-long if and only if
	\begin{itemize}
		\item $\exc(w_i)\leq 0$ for all $i\in[n)$ and
		\item $\delay(w_i) \geq \exc(w_n) > 0$ for all $i\in(n]$.
	\end{itemize}
\end{lemma}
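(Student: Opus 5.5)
The forward direction is exactly \cref{used-buffer-lateness-minimal-tlp}, so only the converse needs work. Assume $\walksym=u_0\ldots u_n$ satisfies both conditions. Since $\exc(w_n)>0$, i.e.\ $\rdt{n}>\ldt[]{u_n}$, the walk $\walksym$ is too-long. It remains to show that no proper sub-walk $u_i\walksym u_j$ with $0\le i\le j\le n$ and $(i,j)\neq(0,n)$ is too-long. I would split this into two cases according to whether $j<n$ or $j=n$. The case $j<n$ covers both prefixes and inner sub-walks; the case $j=n$, $i>0$ covers the suffixes.

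The key tool is the relation between real arrival times along a sub-walk and along $\walksym$. For the sub-walk $W'=u_i\walksym u_j$, the arrival time at its last node is
\begin{displaymath}
	\rdt[W']{j-i} = \edt[]{u_i} + \tau(u_i\walksym u_j) = \rdt{j} - \delay(w_i),
\end{displaymath}
because $\rdt{j}=\rdt{i}+\tau(u_i\walksym u_j)$ and $\delay(w_i)=\rdt{i}-\edt[]{u_i}$. Also $\delay(w_i)\ge 0$ for every $i$: for $i=0$ it equals $0$ by definition, and for $i>0$ the hypothesis gives $\delay(w_i)\ge\exc(w_n)>0$.

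For $j<n$, we get $\rdt[W']{j-i}\le\rdt{j}\le\ldt[]{u_j}$, using $\exc(w_j)\le0$. Hence $W'$ is not too-long.

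For $j=n$ and $i\ge1$, we get $\rdt[W']{n-i}=\rdt{n}-\delay(w_i)\le\rdt{n}-\exc(w_n)=\ldt[]{u_n}$. Hence $W'$ is not too-long either.

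The only step needing care is the bookkeeping identity above. One should also check that the sub-walks used in the definition of minimality are exactly these contiguous pieces $u_i\walksym u_j$; they are, since sub-walks are defined as contiguous segments. Trivial single-node sub-walks are covered as well, since for $i=j$ the arrival time is $\edt[]{u_i}\le\ldt[]{u_i}$ by the same argument. No real obstacle is expected.
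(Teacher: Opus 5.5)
Your proposal is correct and follows the paper's proof essentially step for step. Both split proper sub-walks $u_i\walksym u_j$ into the case $j<n$, handled by $\exc(w_j)\le 0$ together with nonnegative slack consumption at $u_i$, and the case $j=n$, $i>0$, handled by $\delay(w_i)\ge\exc(w_n)$ through the identity that the sub-walk's arrival time equals $\rdt{j}-\delay(w_i)$.
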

\begin{proof}
	The forward implication follows from \cref{used-buffer-lateness-minimal-tlp}.
	For the converse, let $\walksym=u_0\ldots u_n$ be a walk satisfying the given properties and $w_i=(u_i,\rdt{i})$.
	Since $\exc(w_n)>0$, $\walksym$ is a too-long path.
	Let $u_i\walksym u_j$ be a sub-walk of $\walksym$ that is too-long.
	If $j<n$, then $\edt[]{u_i} + \tau(u_i\walksym u_j) \leq \edt[]{u_0} + \tau(u_0\walksym u_j) = \rdt{j} = \exc(w_j) + \ldt[]{u_j} \leq \ldt[]{u_j}$.
	Thus, $j=n$.
	
	If $i>0$, then $\edt[]{u_i} + \tau(u_i\walksym) = \rdt{i} - \delay(w_i) + \tau(u_i\walksym) = \edt[]{u_0} + \tau(\walksym) - \delay(w_i) \leq \rdt{n} -\exc(w_n) = \ldt[]{u_n}$.
	Hence, $i=0$ and $\walksym$ is minimal.
\end{proof}

Another useful observation is that any minimal too-long path that is also relaxed too-long must have relaxed arrival times that exceed the earliest arrival times at all but the first node.
\begin{observation}
	\label{minimal-tlp-and-rtl-properties}
	Let $\calT$ be a time discretisation and $\walksym=u_0\ldots u_n$ be a minimal too-long path that is $\calT$-relaxed too-long.
	Then $\rat{i} > \edt[]{u_i}$ for all $i>0$.
\end{observation}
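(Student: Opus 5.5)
We argue by contradiction and exploit minimality of $\walksym$. Suppose that for some $i\in(n]$ we have $\rat{i}\leq \edt[]{u_i}$. The idea is that the relaxed computation along $\walksym$ then \enquote{forgets} everything that happened before $u_i$. As a result, the suffix $u_i\walksym$ is relaxed too-long on its own, and hence too-long, which contradicts minimality.

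The first step is a monotonicity property of the relaxed arrival recursion. The map $t\mapsto\rounddown{t}{b}+\tau(b)$ is nondecreasing in $t$, since $\rounddown{\cdot}{b}$ is a maximum over a fixed set of points below $t$. Consider the suffix $\walksym'=u_i\walksym$, whose relaxed arrival times start at $\rat[\walksym']{0}=\edt[]{u_i}\geq\rat{i}$. An induction over the remaining arcs $b_{i+1},\ldots,b_n$ then gives
\[
\rat[\walksym']{j-i}\geq\rat{j}\quad\text{for all } j\in[i,n].
\]
In particular, $\rat[\walksym']{n-i}\geq\rat{n}>\ldt[]{u_n}$, so $\walksym'$ is $\calT$-relaxed too-long.

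The second step uses the inequality $\rat[\walksym']{m}\leq\rdt[\walksym']{m}$, which the paper notes holds by definition because rounding down never increases a value. This shows that $\walksym'$ is also too-long. Since $i>0$, $\walksym'$ is a proper sub-walk of $\walksym$, which contradicts the minimality of $\walksym$. Hence $\rat{i}>\edt[]{u_i}$ for all $i>0$.

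There is no real obstacle here. The only point needing care is the monotonicity induction, in particular stating clearly that the rounding operator is monotone and that both recursions use the same arcs $b_j$, and hence the same sets $\calT_{b_j}$.
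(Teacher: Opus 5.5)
Your proof is correct and uses the same two facts as the paper's: minimality means the suffix $u_i\walksym$ is not too-long, and rounding down never increases arrival times. The paper argues directly rather than by contradiction, via the chain $\edt[]{u_i}+\tau(u_i\walksym)\leq\ldt[]{u_n}<\rat{n}\leq\rat{i}+\tau(u_i\walksym)$, so it does not need your monotonicity induction on the suffix.
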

\begin{proof}
	Let $\walksym=u_0\ldots u_n$ be a minimal too-long path that is $\calT$-relaxed too-long and $i>0$.
	Since $u_i\walksym$ is not too-long, $\edt[]{u_i} + \tau(u_i\walksym) \leq \ldt[]{u_n} < \rat{n} \leq \rat{i} + \tau(u_i\walksym)$, so $\edt[]{u_i} < \rat{i}$.
\end{proof}

\subsection{Path-Based Approaches}\label{subsec:pathbasedapproaches}

We want to formulate the ORP as a binary integer program, which can already be found in our short paper \cite[Section~4]{Wullner2026-xl}.
For the upcoming IP-formulation it is helpful to have an alternative characterisation for when a too-long path is actually relaxed too-long.
\begin{lemma}\label{lemma:relaxedtoolongtimepoints}
	A too-long path $\walksym = u_0b_1u_1\ldots b_nu_n$ is $\calT$-relaxed too-long if and only if for every $i \in (n]$ there is a time point $t_i \in \calT_{b_i}$ such that
	\begin{enumprop}
		\item $t_1 \leq \edt[]{u_0}$,\label{lemma:relaxedtoolongtimepoints:start}
		\item $t_i + \tau(b_i) \geq t_{i+1}$ for all $i \in (n)$, \label{lemma:relaxedtoolongtimepoints:linking}
		\item and $t_n + \tau(b_n) > \ldt[]{u_n}$.\label{lemma:relaxedtoolongtimepoints:end}
	\end{enumprop}
	Additionally, $t_i \in I_\walksym(i) = [\rdt{i-1} - \exc(u_n,\rdt{n}) + 1, \rdt{i-1}]$ for all $i \in (n]$.
\end{lemma}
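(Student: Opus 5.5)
The plan is to prove the two directions of the equivalence separately by comparing any candidate sequence $t_1,\ldots,t_n$ with the rounded relaxed arrival times $\rounddown{\rat{i-1}}{b_i}$ from \cref{def:relaxed_arrival_times}. The interval statement then follows from the resulting chain of inequalities. I expect it to hold for \emph{every} sequence satisfying the three conditions, not just for one well-chosen sequence.

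\emph{Forward direction.} Suppose $\walksym$ is $\calT$-relaxed too-long. I would set $t_i = \rounddown{\rat{i-1}}{b_i}\in\calT_{b_i}$ for $i\in(n]$. This is well defined because $0\in\calT_{b_i}$.
\begin{itemize}
	\item Condition \ref{lemma:relaxedtoolongtimepoints:start} holds because $t_1\leq\rat{0}=\edt[]{u_0}$.
	\item Condition \ref{lemma:relaxedtoolongtimepoints:linking} holds because $t_{i+1}\leq\rat{i}=t_i+\tau(b_i)$.
	\item Condition \ref{lemma:relaxedtoolongtimepoints:end} is exactly $\rat{n}=t_n+\tau(b_n)>\ldt[]{u_n}$.
\end{itemize}

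\emph{Backward direction.} Suppose time points $t_i$ satisfying the three conditions are given. I would show by induction that $t_i\leq\rounddown{\rat{i-1}}{b_i}$ for all $i\in(n]$.
\begin{itemize}
	\item Base case: $t_1\leq\edt[]{u_0}=\rat{0}$. Since $t_1\in\calT_{b_1}$ and $\rounddown{\cdot}{b_1}$ returns the largest point of $\calT_{b_1}$ not exceeding its argument, we get $t_1\leq\rounddown{\rat{0}}{b_1}$.
	\item Inductive step: $t_{i+1}\leq t_i+\tau(b_i)\leq\rounddown{\rat{i-1}}{b_i}+\tau(b_i)=\rat{i}$. Since $t_{i+1}\in\calT_{b_{i+1}}$, this gives $t_{i+1}\leq\rounddown{\rat{i}}{b_{i+1}}$.
\end{itemize}
At $i=n$ this yields $\rat{n}\geq t_n+\tau(b_n)>\ldt[]{u_n}$, so $\walksym$ is relaxed too-long. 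The only subtle point is that the rounding preserves the bound, and this is exactly where membership $t_i\in\calT_{b_i}$ is used.

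\emph{Interval $I_\walksym(i)$.} For the upper bound, the induction above gives $t_i\leq\rat{i-1}$. The definitions give $\rat{j}\leq\rdt{j}$ for all $j$, by induction and since rounding down only decreases values. Hence $t_i\leq\rdt{i-1}$.

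For the lower bound, I would sum the linking inequalities \ref{lemma:relaxedtoolongtimepoints:linking} from $i$ to $n-1$, which gives $t_n\leq t_i+\sum_{j=i}^{n-1}\tau(b_j)$. Combining this with \ref{lemma:relaxedtoolongtimepoints:end} yields
\[
t_i+\tau(u_{i-1}\walksym)>\ldt[]{u_n}=\rdt{n}-\exc(u_n,\rdt{n}).
\]
Using $\rdt{n}=\rdt{i-1}+\tau(u_{i-1}\walksym)$, this becomes $t_i>\rdt{i-1}-\exc(u_n,\rdt{n})$. Integrality then gives the lower bound $\rdt{i-1}-\exc(u_n,\rdt{n})+1$.

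I expect no real obstacle. The main care is in the index bookkeeping between $t_i$, which lives on arc $b_i$, and the node-based arrival times $\rat{i-1}$ and $\rdt{i-1}$. Minimality of $\walksym$ is not needed anywhere.
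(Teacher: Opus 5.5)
Your proof is correct and follows the paper's argument essentially step for step. The shared steps are:
\begin{itemize}
\item the choice $t_i=\rounddown{\rat{i-1}}{b_i}$ for the forward direction;
\item the induction $t_i\le\rounddown{\rat{i-1}}{b_i}$, equivalently $\rat{i}\ge t_i+\tau(b_i)$, for the converse;
\item the telescoping of the linking inequalities with condition (3) for the lower end of $I_\walksym(i)$.
\end{itemize}
The only cosmetic difference is the upper end: you go through $t_i\le\rat{i-1}\le\rdt{i-1}$, whereas the paper chains conditions (1) and (2) directly to get $t_i\le\edt[]{u_0}+\sum_{j<i}\tau(b_j)=\rdt{i-1}$, which does not need $t_i\in\calT_{b_i}$.
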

\begin{proof}
	Evidently, $\rat{i} - \tau(b_i)\in \calT_{b_i}$ for $i > 0$.
	Assume $\walksym$ is relaxed too-long. Then we can set $t_i = \rat{i} - \tau(b_i)$.
	Then, by definition of $\rat{i}$, all three conditions hold.
	
	Conversely, assume that $t_i$ fulfilling the conditions exist. 
	Inductively, $\rat{i-1} \geq t_i$: the base case follows from~\cref{lemma:relaxedtoolongtimepoints:start}, and since $t_i\in\calT_{b_i}$, we obtain $\rat{i} = \rounddown{\rat{i-1}}{b_i} + \tau(b_i) \geq t_i + \tau(b_i)$.
	For $i<n$, this is at least $t_{i+1}$ by~\cref{lemma:relaxedtoolongtimepoints:linking}; for $i=n$, it is greater than $\ldt[]{u_n}$ by~\cref{lemma:relaxedtoolongtimepoints:end}.
	Thus, $\walksym$ is relaxed too-long.
	
	For the additional statement, observe that \eqref{lemma:relaxedtoolongtimepoints:start} and \eqref{lemma:relaxedtoolongtimepoints:linking} together give
	\begin{equation*}
		t_i \leq t_1 + \sum_{j = 1}^{i-1}\tau(b_j) \leq \edt[]{u_0} + \sum_{j =1}^{i-1}\tau(b_j) = \rdt{i-1}.
	\end{equation*}
	Similarly, \cref{lemma:relaxedtoolongtimepoints:linking,lemma:relaxedtoolongtimepoints:end} give
	\begin{equation*}
		t_i + \sum_{j=i}^n \tau(b_j) \geq t_n + \tau(b_n) > \ldt[]{u_n}
	\end{equation*}
	and therefore
	\begin{equation*}
		t_i > \ldt[]{u_n} - \sum_{j=i}^n \tau(b_j)
		= \rdt{i-1} - \exc(u_n,\rdt{n}).
	\end{equation*}
\end{proof}

Based on this reformulation, we obtain the following binary program~\eqref{mip:pathbased}.
Recall that we denote the set of all minimal too-long paths $\tlpsymb$ and for a dispatch-node arc $b=(v,k)(v',k')$, we write $a(b)=vv'$ for its underlying arc in $D$.
For a walk $\walksym$ we write $n_\walksym$ for its number of arcs.
The binary variable $y_{at}$ indicates whether $t$ is included in $\calT_a$, and $x_{\walksym it}$ indicates whether $t$ is chosen as the time for the $i$th arc of $\walksym$.
By restricting $t$ to $I_\walksym(i)$, we guarantee that both \cref{lemma:relaxedtoolongtimepoints:start,lemma:relaxedtoolongtimepoints:end} hold, so these conditions are not explicitly imposed.
For an arc $a \in \A{D}$, let $I_a$ denote the union of all intervals $I_\walksym(i)$ where $a(b_i) = a$. 
\begin{subequations}
	\label{mip:pathbased}
	\begin{alignat}{20}
		&\min & \sum_{a\in\A{D}}\sum_{t\in I_a} y_{at} \label{mip:path:objective}\\
		&\mathrm{s.t.}
		& \sum_{t\in I_\walksym(i)} x_{\walksym it} & =&\;& 1
		&&  \forall\walksym\in\tlpsymb,\ i\in(n_\walksym] \label{mip:path:choose}\\
		&& x_{\walksym it} &\leq && y_{a(b_i)t}
		&&  \forall\walksym\in\tlpsymb,\ i\in(n_\walksym],\ t\in I_\walksym(i) \label{mip:path:include}\\
		&& \sum_{t\in I_\walksym(i)} t x_{\walksym it}+\tau(b_i)&
		\geq && \sum_{t\in I_\walksym(i+1)} t x_{\walksym,i+1,t}
		&&  \forall\walksym\in\tlpsymb,\ i\in(n_\walksym) \label{mip:path:link}\\
		&& x_{\walksym it}& \in&& \{0,1\}
		&&  \forall\walksym\in\tlpsymb,\ i\in(n_\walksym],\ t\in I_\walksym(i) \\
		&& y_{at}& \in &&\{0,1\}
		&\quad&  \forall a\in\A{D},\ t\in I_a
	\end{alignat}
\end{subequations}

\begin{remark}
	\textcite{shu_new_2026} also proposed a mixed-integer program for this problem (using their slightly stronger elimination criterion).
	Due to their differing definition of too-long paths that may start only at origin nodes $(o_k,k)$, the MIP is significantly more complex and not reproduced here.
\end{remark}

Alternatively, a very simple heuristic solution for ORP can be obtained by simply adding time points for each too-long path according to its real arrival time:
\begin{theorem}[{{\cite[Theorem~3]{shu_new_2026}}}]\label{theorem:real_arrival_time_eliminates}
	Let $\walksym = b_1\ldots b_n$ denote a too-long path in $D_\solsym$.
	For a discretisation $\calT$ with $\rdt{i-1} \in \calT_{b_i}$ for $i \in (n]$, $\walksym$ is $\calT$-relaxed too-long (and thus eliminated).
\end{theorem}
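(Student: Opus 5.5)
The plan is to apply \cref{lemma:relaxedtoolongtimepoints} with the explicit choice $t_i = \rdt{i-1}$ for $i\in(n]$. By hypothesis these satisfy $t_i\in\calT_{b_i}$. It then remains to check the three conditions of the lemma, and each of them should reduce to an identity about real arrival times.

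For \cref{lemma:relaxedtoolongtimepoints:start}, we have $t_1=\rdt{0}=\edt[]{u_0}$, so $t_1\leq\edt[]{u_0}$ holds with equality. For \cref{lemma:relaxedtoolongtimepoints:linking}, the definition of real arrival time gives $\rdt{i} = \rdt{i-1}+\tau(b_i)$. Hence $t_i+\tau(b_i) = \rdt{i} = t_{i+1}$, and the inequality holds with equality. For \cref{lemma:relaxedtoolongtimepoints:end}, we get $t_n+\tau(b_n)=\rdt{n}>\ldt[]{u_n}$ exactly because $\walksym$ is too-long. The lemma therefore shows that $\walksym$ is $\calT$-relaxed too-long. \Cref{obs:eliminated_if_rtl} then gives that $\walksym$ is eliminated.

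As a sanity check, one could instead argue directly from \cref{def:relaxed_arrival_times}: show by induction that $\rat{i}=\rdt{i}$. The step uses $\rdt{i-1}\in\calT_{b_i}$, which gives $\rounddown{\rat{i-1}}{b_i}=\rat{i-1}$. This is equally short.

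There is no real obstacle here. The only point needing care is the notation: the theorem indexes the walk by its arcs $b_1\ldots b_n$, and we must match this to the node indices $u_0,\ldots,u_n$ so that $\rdt{i-1}$ is the real arrival time at the tail $u_{i-1}$ of $b_i$. Also, the lemma is stated for too-long paths, and that assumption is given in the theorem.
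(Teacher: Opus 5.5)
Your proof is correct. With $t_i=\rdt{i-1}$, the first two conditions of \cref{lemma:relaxedtoolongtimepoints} hold with equality, and the third holds exactly because $\walksym$ is too-long. Your alternative induction $\rat{i}=\rdt{i}$, using $\rdt{i-1}\in\calT_{b_i}$, is equally valid.

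The paper gives no proof of its own here; it simply quotes the result from \cite{shu_new_2026}. Your argument is the one the paper's framework points to, since $\rdt{i-1}$ is precisely the upper endpoint of $I_\walksym(i)$ in \cref{lemma:relaxedtoolongtimepoints}.
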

\textcite[][Section 7.4]{shu_new_2026} also propose a matheuristic for ORP.
They keep all time points from the previous discretisation and then process the too-long paths in batches.
For each batch, they solve their MIP to determine the fewest time points to add to make all paths in the current batch relaxed too-long.
They further restrict the MIP to selecting time points from among those implied by~\cref{theorem:real_arrival_time_eliminates}.

One drawback of these approaches is that they require a full enumeration of all minimal too-long paths, which may be impractical.
\textcite[][Section 4.4]{helber_arc-based_2026} gives an example where a cycle in the dispatch-node graph leads to an exponential number of too-long paths.
Since such cycles are easy to detect and may be handled separately \parencite[see e.g.,][Section 4.1.3.]{marshall_interval-based_2021}, we give another motivating example that does not rely on such cycles.
\begin{example}
	Consider an instance on the network depicted in~\cref{example:pathological}, which consists of the path $\spathsym=v_0\ldots v_n$ together with the nodes $v'$ and $v''$ as well as arcs $v_0v'$, $v'v_1$, $v_{n-1}v''$, and $v''v_{n}$.
	Arcs $v_0v_1$ and $v_{n-1}v_n$ have length~3 and all others have length~1.
	Commodities $k_1$ and $k_2$ must both be transported from $v_0$ to $v_n$.
	Note that the nodes $v'$ and $v''$ exist simply to make $\edt{v}$ and $\ldt{v}$ irrelevant for nodes $v_1$ to $v_{n-1}$.
	\begin{figure}[tbp]
		\centering
		\begin{tikzpicture}
			\node[draw,label={[align=left] above:{$r_{k_1}= 0$\\$r_{k_2}=1$}}] (v0) {$v_0$}; 
			\node[draw, right=3cm of v0] (v1) {$v_1$}; 
			\node[draw, below right=of v0] (vp) {$v'$};
			\node[draw, right=of v1] (v2) {$v_{2}$};
			\node[draw, right=of v2] (vn) {$v_{n-1}$};
			\node[draw, below right=of vn] (vup) {$v''$};
			\node[draw,label={[align=left]above:{$\ell_{k_1}=n+4$\\$\ell_{k_2}=n+5$}}, right=3cm of vn] (vend) {$v_n$};
			
			\draw[->] (v0) to  node[above] {$3$} (v1);
			\draw[->] (v1) to  node[above] {$1$} (v2);
			\draw[->,white,text=black] (v2) to  node {$\dots$} (vn);
			\draw[->] (vn) to  node[below left] {$1$} (vup);
			\draw[->] (v0) to  node[below left] {$1$} (vp);
			\draw[->] (vn) to  node[above] {$3$} (vend);
			\draw[->] (vup) to node[below right] {$1$} (vend);
			\draw[->] (vp) to node[below right] {$1$} (v1);
\end{tikzpicture}
		\caption{Example of an instance with exponentially many too-long paths.
			Arc weights represent travel time $\tau$.}
		\label{example:pathological}
	\end{figure}
	
	The path $\spathsym$ has length $n + 4$, so each commodity could traverse this path by itself and arrive on time, but when consolidated, $k_1$ arrives too late.
	If we consider the dispatch-node graph for the solution that consolidates them on all arcs of $\spathsym$, we see that the too-long paths must start at $\vdn{v_0}{k_2}$ and end at $\vdn{v_n}{k_1}$, since this is the only way to achieve an arrival time of $n+5$ and the only way to reach the only deadline this violates.
	However, the nodes in between are irrelevant, so we can choose either $\vdn{v_i}{k_1}$ or $\vdn{v_i}{k_2}$ for all intermediate $i\in(n)$.
	Thus, we obtain $2^{n-1}$ minimal too-long paths in this example.

	Despite this abundance of minimal too-long paths, it suffices to add the time points $1$ on arc $v_0v_1$ and $i+2$ on the arcs $v_{i-1}v_{i}$ for $i \in [2,n]$ to eliminate them all (by making them relaxed too-long).
\end{example}
We also see in this example that a necessary ingredient of this blow-up in the number of too-long paths is that on most arcs we can choose between the two commodities.
One can imagine that for instances where good solutions require consolidations of many commodities, the number of too-long paths may grow drastically.
Such instances have already been observed to be harder to solve \parencite[see e.g., the HC classification of][]{marshall_interval-based_2021}.
This motivates us to find a more compact representation of the conflicts encoded by too-long paths.

\subsection{A Graph-Based Criterion}\label{subsec:graph-criterion}
As we just saw, there may be exponentially many too-long paths that can be eliminated in the same way.
To deal with such cases, we want to find a more compact representation that lets us make all minimal too-long paths $\calT$-relaxed too-long, without checking them all individually.

To motivate our upcoming construction, we use the  partial network shown in \cref{fig:motivation-DW} and its dispatch-node graph.
The time bounds not shown in the figure are chosen so that no single-arc path is too-long; their exact values are irrelevant.
For ease of notation, we write $u_{ij}$ for $\vdn{v_i}{k_j}$.
Here, the minimal too-long paths are the paths that start at $u_{01}$ or $u_{02}$ and end at $u_{21}$ or $u_{22}$, except the two paths $u_{01}u_{11}u_{22}$ and $u_{01}u_{12}u_{22}$.
We use the full time discretisation $\calT$, which clearly ensures that all too-long paths are $\calT$-relaxed too-long.
\begin{figure}[tbp]
	\centering
	\begin{tikzpicture}
		\node[draw,label={[align=left] left:{$\edt[k_1]{v_0}= 1$\\$\edt[k_2]{v_0}=2$}}] (v0) {$v_0$};
		\node[draw, right= 2cm of v0] (v1) {$v_1$};
		\node[draw, right=2cm of v1,label={[align=left] right:$\ldt[k_1]{v_2}= 6$\\$\ldt[k_2]{v_2}=7$}] (v2) {$v_2$};
		
		\draw[->] (v0) to  node[above] {\small$\Set{k_1,k_2}$} node[below] {\small$3$} (v1);
		\draw[->] (v1) to  node[above] {\small$\Set{k_1,k_2}$} node[below] {\small$3$} (v2);
	\end{tikzpicture}
	
	\medskip
	
	\begin{tikzpicture}
		\node[draw] (v01) {$v_0,k_1$};
		\node[draw, below=of v01] (v02) {$v_0,k_2$};
		\node[draw, right=of v01] (v11) {$v_1,k_1$};
		\node[draw, right=of v02] (v12) {$v_1,k_2$};
		\node[draw, right=of v11] (v21) {$v_2,k_1$};
		\node[draw, right=of v12] (v22) {$v_2,k_2$};
		
		\draw[->] (v01) to (v11);
		\draw[->] (v01) to (v12);
		\draw[->] (v02) to (v11);
		\draw[->] (v02) to (v12);
		\draw[->] (v11) to (v21);
		\draw[->] (v11) to (v22);
		\draw[->] (v12) to (v21);
		\draw[->] (v12) to (v22);
	\end{tikzpicture}
	\caption{Illustrative example to motivate the construction of $D_\tlpsymb$.}
	\label{fig:motivation-DW}
\end{figure}

First, we observe that certain too-long paths are \enquote{more critical} than others.
For example, consider the two too-long paths $\walksym=u_{01}u_{11}u_{21}$ and $\walksym'=u_{02}u_{11}u_{21}$. Then $\rat{2} = 7$ and $\rat[\walksym']{2} = 8$.
Thus, it suffices to make $\walksym$ relaxed too-long, since $\walksym'$ is then relaxed too-long as well.
As a result, we are interested in the \enquote{most critical arrival times} at the nodes where minimal too-long paths end.

In order to compute them, the following propagation approach would be natural:
since the most critical arrival time at $u$ is obtained by some minimal too-long path $\walksym=u_0\ldots u_{n-1}u_n$ with $u_n=u$, its most critical arrival time $\rat{n}$ is equal to $\rounddown{\rat{n-1}}{u_{n-1}u_n} + \tau(u_{n-1}u_n)$.
Thus, we could simply compute these values for the predecessors of $u$ (on minimal too-long paths) and take the smallest one.

This propagation does not work, however.
In our small example, we would obtain that $u_{22}$ has $7$ as its most critical arrival time, which does not exceed $\ldt[]{u_{22}}$ despite the fact that all minimal too-long paths are eliminated.
This occurs since there is \enquote{cross-contamination}:
the minimal too-long path $\walksym=u_{01}u_{12}u_{21}$ propagates its relaxed arrival time of $4$ to $u_{12}$, which is then moved to $u_{22}$ to obtain the $7$ using the walk $\walksym'=u_{02}u_{12}u_{22}$.
But the walk $u_{01}u_{12}u_{22}$ that actually leads to the $7$ is not even too-long.

Thus, we need to ensure that this cross-contamination does not occur.
In the example above, this happened since $\rat{1} = 4$ but $\rat[\walksym']{1}=5$, so the walk obtained by retracing the minimal one was not long enough.
We fix this by means of a time-expansion:
we augment the nodes $u_{ij}$ by their real arrival times in the minimal too-long paths they appear in.
This ensures that all prefixes that meet at a node have equal real arrival time and can be used interchangeably to obtain too-long paths.
Thus, only the worst (earliest) one is of interest and this motivates the following definition.
\begin{definition}[Time-expanded too-long path graph]
	The \emph{time-expanded too-long path graph $D_\tlpsymb$} is given by
	\begin{align*}
		\V{D_\tlpsymb} &= \Set{(u_i, \rdt{i}) : \walksym=u_0\ldots u_n\in\tlpsymb, i\in [n]}, \\
		\A{D_\tlpsymb} &= \Set{(u_i,\rdt{i})(u_{i+1},\rdt{i+1}) : \walksym=u_0\ldots u_n\in\tlpsymb, i\in [n)}.
	\end{align*}
	For notational purposes, we identify a node $w=(u,t)$ with its first component $u$, letting us write $\edt[]{w}$ or $a(w)$.

	We also define $\calS_\tlpsymb$ and $\calB_\tlpsymb$ to be the sets of nodes in $D_\tlpsymb$ originating from the initial and final nodes of walks in $\tlpsymb$.
\end{definition}

This graph expands the minimal too-long paths by their real arrival times.
Since we frequently move between a minimal too-long path and its time-expanded version, we introduce some helpful notation.
\begin{notationenum}
	\begin{itemize}
		\item For $\walksym=u_0\ldots u_n\in\tlpsymb$, we write $\spathsym_\walksym$ for the path $(u_0,\rdt{0})\ldots(u_n,\rdt{n})$ in $D_\tlpsymb$.
		\item For a path $\spathsym=w_0\ldots w_n$ in $D_\tlpsymb$ with $w_i=(u_i,t_i)$ for $i\in[n]$, we write $\walksym_\spathsym$ for the walk $u_0\ldots u_n$ in $D_\solsym$.
	\end{itemize}
\end{notationenum}
Note that, using this notation, $D_\tlpsymb = \bigcup_{\walksym\in\tlpsymb} \spathsym_\walksym$.

The rest of this subsection is essentially dedicated to showing that, on this graph, the propagation described above actually works.
To this end, we first note a few properties.
\begin{observation}
	\label{SW-BW-characterisation}
	Let $w=(u,t)\in\V{D_\tlpsymb}$.
	\begin{enumerate}
		\item $\Nin{w} = \emptyset$ if and only if $w\in\calS_\tlpsymb$ if and only if $\delay(w) = 0$.
		\item $\Nout{w} = \emptyset$ if and only if $w\in\calB_\tlpsymb$ if and only if $\exc(w)>0$.
	\end{enumerate}
\end{observation}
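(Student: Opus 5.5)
Both parts of the statement are chains of three equivalences, and I prove each chain by showing that both extreme conditions are equivalent to the middle one. The working tool is the characterisation of minimal too-long paths in \cref{used-buffer-lateness-minimal-tlp,used-buffer-lateness-minimal-tlp-equiv}, applied to the path $\walksym\in\tlpsymb$ that realises a given occurrence of $w$.

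The key step is to read off $\delay$ and $\exc$ from any such occurrence. Let $w=(u,t)$ and suppose $w=(u_i,\rdt{i})$ for some $\walksym=u_0\ldots u_n\in\tlpsymb$. Then \cref{used-buffer-lateness-minimal-tlp} gives the following:
\begin{itemize}
	\item if $i=0$, then $\delay(w)=0$;
	\item if $i>0$, then $\delay(w)\geq\exc(w_n)>0$;
	\item if $i=n$, then $\exc(w)>0$;
	\item if $i<n$, then $\exc(w)\leq 0$.
\end{itemize}
The values $\delay(w)$ and $\exc(w)$ depend only on $w=(u,t)$, not on the path. So if $w$ occurs at position $0$ of one path, it cannot occur at a position $i>0$ of another, since that would force $\delay(w)>0$. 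Likewise, if $w$ occurs at the last position of one path, it cannot occur at a non-final position of another, since that would force $\exc(w)\le 0$.

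With this, I first show that $w\in\calS_\tlpsymb$ if and only if $\delay(w)=0$. If $w\in\calS_\tlpsymb$, it arises as $(u_0,\rdt{0})$ for some $\walksym$, so $\delay(w)=0$. Conversely, if $\delay(w)=0$, then $w$ cannot arise at any position $i>0$, because that would give $\delay(w)>0$. Since $w\in\V{D_\tlpsymb}$, it arises at some position, which must therefore be $0$, so $w\in\calS_\tlpsymb$.

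Next I show that $\Nin{w}=\emptyset$ if and only if $\delay(w)=0$. An incoming arc of $w$ in $D_\tlpsymb$ exists exactly when $w$ arises at a position $i>0$ of some path in $\tlpsymb$, because by definition arcs of $D_\tlpsymb$ come only from consecutive nodes of the paths $\spathsym_\walksym$. Hence $\Nin{w}\neq\emptyset$ if and only if $w$ occurs at some $i>0$, which by the key step holds if and only if $\delay(w)>0$. Here I use that every occurrence has either $i=0$ or $i>0$ and that $\delay(w)\ge 0$ in both cases.

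The second statement is symmetric, using positions $n$ versus $<n$ and the sign of $\exc$. An outgoing arc exists exactly when $w$ arises at a position $i<n$ of some path, which holds if and only if $\exc(w)\le 0$. Also, $w\in\calB_\tlpsymb$ holds exactly when $w$ arises at a final position, which holds if and only if $\exc(w)>0$. Combining these, $\Nout{w}=\emptyset$ if and only if $\exc(w)>0$, and $\exc(w)>0$ if and only if $w\in\calB_\tlpsymb$.

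The only subtle point, which I expect to be the main obstacle, is the well-definedness across different paths that share the node $w$. It is resolved precisely because $\delay$ and $\exc$ depend only on the pair $(u,t)$.
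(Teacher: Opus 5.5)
Your proof is correct and takes the same route as the paper. Both arguments read off $\delay(w)$ and $\exc(w)$ at each occurrence of $w$ on a minimal too-long path via \cref{used-buffer-lateness-minimal-tlp}, using that these values depend only on $(u,t)$; you spell out the position case analysis (position $0$ versus $>0$, and $n$ versus $<n$) and the link to in- and out-arcs, which the paper's two-line proof leaves implicit.
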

\begin{proof}
	By \cref{used-buffer-lateness-minimal-tlp}, the nodes $w\in D_\tlpsymb$ that have $\delay(w)=0$ are exactly the initial nodes of minimal too-long paths.
	Similarly, the nodes with $\exc(w)>0$ are exactly the final nodes of minimal too-long paths.
\end{proof}

\begin{corollaryenum}
	\label{tlp-properties-in-tetlp}
	\begin{enumprop}
		\item If $\spathsym$ is an $\calS_\tlpsymb\calB_\tlpsymb$-path in $D_\tlpsymb$, then $\walksym_\spathsym$ is too-long.
		\item If $\spathsym$ is a path in $D_\tlpsymb$ and $\walksym_\spathsym$ is too-long, then $\spathsym$ ends at a node in $\calB_\tlpsymb$.
	\end{enumprop}
\end{corollaryenum}
\begin{proof}
	We notice that each arc $(u,t)(u',t')$ in $D_\tlpsymb$ satisfies $t' = t + \tau(uu')$, giving us that any path $\spathsym$ from $w=(u,t)$ to $w'=(u',t')$ satisfies $t' = t + \tau(\walksym_\spathsym)$.
	Now let $\spathsym=w_0\ldots w_n$ be a path in $D_\tlpsymb$ with $w_i=(u_i,t_i)$ for all $i\in[n]$.
	
	For the first part, let $\spathsym$ be an $\calS_\tlpsymb\calB_\tlpsymb$-path. Then $\edt[]{u_0} + \tau(\walksym_\spathsym) = t_0 + \tau(\walksym_\spathsym) = t_n > \ldt[]{u_n}$, by \cref{SW-BW-characterisation}.
	Thus, $\walksym_\spathsym$ is too-long.
	
	For the second part, $\ldt[]{w_n} < \edt[]{w_0} + \tau(\walksym_\spathsym)$ since this path is too-long.
	Using \cref{SW-BW-characterisation}, $\edt[]{w_0} \leq t_0$, giving us $\ldt[]{w_n} < t_0 + \tau(\walksym_\spathsym) = t_n$, so $w_n\in\calB_\tlpsymb$.
\end{proof}

Next, let us define the propagation we already motivated at the start of the subsection.
\begin{definition}[Relaxed arrival times in {{$\exgraph$}}]
	For $w\in D_\tlpsymb$, we define the \emph{relaxed arrival time $\ratw{w}$ at $w$} by
	\begin{displaymath}
		\ratw{w} = \begin{cases}
			\edt[]{w} & \text{if } \Nin{w}=\emptyset, \\
			\min\Set{\rounddown{\ratw{w'}}{w'w} + \tau(w'w) : w'\in\Nin{w}} & \text{otherwise}.
		\end{cases}
	\end{displaymath}
\end{definition}
Note that this is well defined since $D_\tlpsymb$ is acyclic and  $\Nin{w} = \emptyset$ if and only if $t=\edt[]{w}$.

We now establish the desired characterisation, namely that making the critical prefixes computed by the propagation relaxed too-long is necessary and sufficient.
\begin{theorem}
	\label{criterion:tetlpg}
	Let $\calT$ be a time discretisation.
	Then all minimal too-long paths in $D_\solsym$ are $\calT$-relaxed too-long if and only if $\ratw{w}  > \ldt[]{w}$ for all $w\in\calB_\tlpsymb$.
\end{theorem}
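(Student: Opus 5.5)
The plan is to compare the graph-based values $\ratw{w}$ with the path-based relaxed arrival times $\rat{i}$. The key intermediate claim is the following. For every node $w\in\V{D_\tlpsymb}$,
\[
\ratw{w}=\min\Set{\rat[\walksym_\spathsym]{n_\spathsym} : \spathsym \text{ is an } \calS_\tlpsymb w\text{-path in } D_\tlpsymb},
\]
where $n_\spathsym$ denotes the number of arcs of $\spathsym$. Note that every node of $D_\tlpsymb$ is reachable from $\calS_\tlpsymb$, since it lies on some $\spathsym_\walksym$ that starts in $\calS_\tlpsymb$. I will prove the claim by induction along a topological order of the acyclic graph $D_\tlpsymb$.

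For a source $w$, \cref{SW-BW-characterisation} gives $w\in\calS_\tlpsymb$. The only path ending at $w$ is then the trivial one, whose relaxed arrival time is $\edt[]{w}=\ratw{w}$. For the induction step, the map $t\mapsto\rounddown{t}{a}+\tau(a)$ is monotone non-decreasing. Hence minimising over the last arc $w'w$ and over paths to $w'$ commutes with applying this map. Since $\rat[\walksym_\spathsym]{\cdot}$ is computed by exactly the same recursion along the underlying walk, the claim follows.

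For the direction ``$\Leftarrow$'', let $\walksym\in\tlpsymb$. Its expansion $\spathsym_\walksym$ is an $\calS_\tlpsymb\calB_\tlpsymb$-path ending at some $w\in\calB_\tlpsymb$, and $\walksym_{\spathsym_\walksym}=\walksym$. By the claim, $\rat{n}\geq\ratw{w}>\ldt[]{w}=\ldt[]{u_n}$, so $\walksym$ is $\calT$-relaxed too-long.

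The direction ``$\Rightarrow$'' is the main obstacle, because a minimising path $\spathsym$ in the claim need not be some $\spathsym_\walksym$. It can mix prefixes and suffixes from different minimal too-long paths, which is exactly the cross-contamination issue. Take $w\in\calB_\tlpsymb$ and a minimising $\calS_\tlpsymb w$-path $\spathsym=w_0\ldots w_n$ with $w_i=(u_i,t_i)$. By \cref{tlp-properties-in-tetlp}, the walk $\walksym_\spathsym$ is too-long. Moreover, its real arrival times are exactly the $t_i$, because $t_0=\edt[]{u_0}$ and each arc adds its travel time.

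I then want to show that $\walksym_\spathsym$ is itself minimal too-long, and I will use \cref{used-buffer-lateness-minimal-tlp-equiv} for this. Every node $w_i$ lies on some $\spathsym_{\walksym'}$ with $\walksym'\in\tlpsymb$, at the same time $t_i$.
\begin{itemize}
\item For $i<n$, the node $w_i$ has an out-neighbour, so $w_i\notin\calB_\tlpsymb$. By \cref{SW-BW-characterisation}, $\exc(w_i)\leq 0$.
\item For $i>0$, we need $\delay(w_i)\geq\exc(w_n)$, where $\exc(w_n)>0$ holds because $w_n\in\calB_\tlpsymb$.
\end{itemize}
The second condition is the delicate point. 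The slack bound at $w_i$ is only known relative to the lateness of a different path $\walksym'$. So instead I will argue directly with sub-walks. Suppose $\walksym_\spathsym$ has a proper too-long sub-walk. By \cref{obs:elimination_of_extensions}-style reasoning, it is enough to obtain a contradiction or to pass to that sub-walk.

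The sub-walk cannot end before $u_n$. Otherwise \cref{tlp-properties-in-tetlp}(2) would force an intermediate node of $\spathsym$ into $\calB_\tlpsymb$, but nodes of $\calB_\tlpsymb$ have no out-arcs. So the sub-walk has the form $u_i\walksym_\spathsym$ with $i>0$.

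At this point I will take $\spathsym$ to be, among the minimisers, one with the fewest arcs. Consider the corresponding time-expanded suffix starting at $(u_i,\edt[]{u_i})$. If it also lies in $D_\tlpsymb$, its relaxed arrival time is at most that of $\spathsym$. The reason is that starting earlier and rounding down monotonically can only decrease the relaxed arrival time. This contradicts the choice of $\spathsym$ with the fewest arcs. If that expanded suffix is not in $D_\tlpsymb$, I plan to take a minimal too-long sub-walk of the suffix and repeat the argument. Its expansion lies in $D_\tlpsymb$ by definition, and it still has relaxed arrival time at most $\ratw{w}$, by monotonicity together with the fact that its start time $\edt[]{u_j}\leq t_j$.

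In either case I obtain some $\walksym''\in\tlpsymb$ with $\rat[\walksym'']{\cdot}\leq\ratw{w}$ at its final node $u_n$. By hypothesis this is greater than $\ldt[]{u_n}=\ldt[]{w}$, which gives $\ratw{w}>\ldt[]{w}$. Making this minimality and monotonicity bookkeeping precise is where I expect the work to be.
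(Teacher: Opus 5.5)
Your intermediate claim, that $\ratw{w}$ is the minimum of $\rat[\walksym_\spathsym]{n_\spathsym}$ over all $\calS_\tlpsymb w$-paths $\spathsym$, is correct. It packages the paper's first two claims, and your backward direction is also correct. The forward direction, however, has a genuine gap exactly at the step you defer as bookkeeping.

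You want to compare a minimal too-long sub-walk $\walksym^*=u_j\walksym_\spathsym u_n$ with the minimising path $\spathsym$ using monotonicity of $t\mapsto\rounddown{t}{a}+\tau(a)$. For that, the \emph{relaxed} starting values must be ordered: you need $\rat[\walksym^*]{0}=\edt[]{u_j}\leq\rat[\walksym_\spathsym]{j}=\ratw{w_j}$. You only have $\edt[]{u_j}\leq t_j$. But $t_j$ is the real arrival time, and $\rat[\walksym_\spathsym]{j}\leq t_j$ points the wrong way, so nothing follows.

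This inequality is not automatic. With $\calT_a=\Set{0,H}$ on every arc, all roundings return $0$, and $\ratw{w}$ is just the travel time of the last arc. For instance, at the end node of the minimal too-long path $\vdn{v_2}{k_2}\vdn{v_3}{k_3}$ of \cref{fig:snd-illustration} one gets $\ratw{w}=2<3=\edt[]{w}$. So the inequality can only hold because of the hypothesis that all minimal too-long paths are $\calT$-relaxed too-long, and your argument invokes that hypothesis only in its final line.

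The ``fewest arcs'' device does not repair this. The expansion of $u_i\walksym_\spathsym$ started at $(u_i,\edt[]{u_i})$ ends at $(u_n,t_n-\delay(w_i))$. This differs from $w$ because $\delay(w_i)>0$, so it is not a competitor in the minimum defining $\ratw{w}$ and yields no contradiction.

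The missing idea is the paper's auxiliary claim: under the hypothesis, $\ratw{w}\geq\edt[]{w}$ for every $w\in\V{D_\tlpsymb}$. It is proved by induction along a topological order.
\begin{enumerate}
\item For $w\notin\calS_\tlpsymb$, splice a minimising $\calS_\tlpsymb w$-path (from your intermediate claim) with the suffix of $\spathsym_\walksym$ from $w$, where $\walksym\in\tlpsymb$ passes through $w$.
\item The result is an $\calS_\tlpsymb\calB_\tlpsymb$-path, so by \cref{tlp-properties-in-tetlp} its walk is too-long. It therefore contains a minimal too-long sub-walk $\walksym^*$ ending at its last node.
\item $\walksym^*$ must start strictly before $w$, because the suffix of $\walksym$ from $w$ is a proper sub-walk of a minimal too-long path and hence not too-long.
\item The induction hypothesis at the start of $\walksym^*$, together with monotonicity, shows that $\ratw{w}$ is at least the relaxed arrival time of $\walksym^*$ at $w$.
\item That relaxed arrival time exceeds $\edt[]{w}$ by \cref{minimal-tlp-and-rtl-properties}, since $\walksym^*$ is relaxed too-long by hypothesis.
\end{enumerate}

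With this lemma in hand, your forward direction closes immediately. For $w\in\calB_\tlpsymb$, take any minimising path and any minimal too-long sub-walk of its walk (which ends at $u_n$, as you correctly argued), then apply monotonicity. No selection among minimisers is needed.
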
	
\begin{proof}
	We start with the backward implication for which we need the following claim.
	\begin{claim}
		For all $w=(u,t)\in\V{D_\tlpsymb}$ and for all $\walksym\in\tlpsymb$ for which $u$ is the $i$th node and $t=\rdt{i}$, we get $\ratw{w} \leq \rat{i}$.
	\end{claim}
	\begin{claimproof}
		We proceed by induction on a topological ordering. For nodes $w=(u,t)\in\calS_\tlpsymb$, if $u$ occurs in $\walksym\in\tlpsymb$, then $u$ is the initial node and $\ratw{w} = \edt[]{u} =\rat{0}$.
		
		For the induction step, we let $w=(u,t)\in\V{D_\tlpsymb}\setminus\calS_\tlpsymb$, $\walksym\in\tlpsymb$ contain $u$ as its $i$th node, and $t=\rdt{i}$.
		Let $u'$ be the predecessor of $u$ on $\walksym$ and $t' = \rdt{i-1}$.
		Then $w'=(u',t')$ is a predecessor of $w$ in $D_\tlpsymb$.
		Thus, using the induction hypothesis, we conclude that $\ratw{w} \leq \rounddown{\ratw{w'}}{w'w} + \tau(w'w) \leq \rounddown{\rat{i-1}}{u'u} + \tau(u'u) = \rat{i}$.
	\end{claimproof}
	
	Now let $\walksym=u_0\ldots u_n\in\tlpsymb$.
	We know that $w=(u_n,\rdt{n})\in\calB_\tlpsymb$ and $\ldt[]{w} < \ratw{w} \leq \rat{n}$, so $\walksym$ is $\calT$-relaxed too-long.
	
	For the forward implication, we need to associate the nodes of $D_\tlpsymb$ with walks in $D_\solsym$ that realise their relaxed arrival times.
	\begin{claim}
		\label{path-realising-ratw}
		For each $w\in\V{D_\tlpsymb}$ there exists a path $\spathsym=w_0\ldots w_n$ in $D_\tlpsymb$ such that $w_n = w$, $w_0\in\calS_\tlpsymb$, and, for all $i\in[n]$,
		\begin{enumprop}
			\item $\rdt[\walksym_\spathsym]{i} = t_i$ and 
			\item $\rat[\walksym_\spathsym]{i} = \ratw{w_i}$.
		\end{enumprop}
	\end{claim}
	\begin{claimproof}
		We prove the claim by induction on some topological ordering, starting with $w_0=(u_0,t_0)\in\calS_\tlpsymb$ once more.
		In this case, we can take the trivial path $\spathsym=w_0$ and get $\rdt[\walksym_\spathsym]{0} = \edt[]{u_0} = t_0$ and $\ratw{w_0} = \edt[]{u_0} = \rat[\walksym_\spathsym]{0}$.
		
		For the induction step, let $w=(u,t)\in\V{D_\tlpsymb}\setminus\calS_\tlpsymb$.
		Then $\ratw{w} = \rounddown{\ratw{w'}}{w'w} + \tau(w'w)$ for some $w'=(u',t')\in\Nin{w}$.
		By definition of the arcs, $t' = t - \tau(w'w)$.
		
		Then, by induction, we get a path $\spathsym'=w_0\ldots w_{n'}$ in $D_\tlpsymb$ with $w_0\in\calS_\tlpsymb$, $w_{n'}=w'$, and, for all $i\in[n']$, $w_i=(u_i,t_i)$, $\rdt[\walksym_{\spathsym'}]{i} = t_i$ and $\ratw{w_i} = \rat[\walksym_{\spathsym'}]{i}$.
		Let $\spathsym=w_0\ldots w_{n'}w$ and $n=n'+1$. Then $w_0\in\calS_\tlpsymb$, $w_n=w$ and the remaining two properties continue to hold for $i\in[n']$.
		For $n$ we get $\rdt[\walksym_\spathsym]{n} = \rdt[\walksym_\spathsym]{n'} + \tau(w'w) = t = t_n$ and $\ratw{w} = \rounddown{\ratw{w'}}{w'w} + \tau(w'w) = \rounddown{\rat[\walksym_\spathsym]{n'}}{u'u} + \tau(u'u) = \rat[\walksym_\spathsym]{n}$, as desired.
	\end{claimproof}
	
	We can also show a lower bound on the values $\ratw{w}$.
	\begin{claim}
		\label{ratw-at-least-edt}
		For all $w\in \V{D_\tlpsymb}$, $\ratw{w} \geq \edt[]{w}$.
	\end{claim}
	\begin{claimproof}
		We prove the result inductively, where it clearly holds for nodes in $\calS_\tlpsymb$.
		So, let $w=(u,t)\in\V{D_\tlpsymb}\setminus\calS_\tlpsymb$.
		By definition of $D_\tlpsymb$, $w$ is on $\spathsym=\spathsym_\walksym$ for some minimal too-long path $\walksym\in\tlpsymb$.
		By \cref{path-realising-ratw} we obtain a $w_0w$-path $\spathsym'$ in $D_\tlpsymb$ and corresponding walk $\walksym'=\walksym_{\spathsym'}$ with $w_0\in\calS_\tlpsymb$ that realises the relaxed arrival times.
		
		We now regard the splicings $\spathsym'' = \spathsym'w\spathsym = w_0\ldots w_n$ and $\walksym'' = \walksym'u\walksym = u_0\ldots u_n$.
		Since $\spathsym''$ starts in $\calS_\tlpsymb$ and ends in $\calB_\tlpsymb$, $\walksym''$ is too-long by \cref{tlp-properties-in-tetlp}.
		It thus contains a minimal too-long path $\walksym^*=u_i\walksym''u_j$, meaning $\spathsym^*=w_i\spathsym''w_j$ ends in $\calB_\tlpsymb$ by \cref{tlp-properties-in-tetlp}.
		Hence, $j=n$.
		
		Since $\walksym$ is a minimal too-long path and $w\notin\calS_\tlpsymb$, $u\walksym$ is not too-long, and, thus, $u=u_s$ is in $\walksym^*$.
		Consequently, $i<s$, $u_i\in \walksym'$, and $\ratw{w_i} = \rat[\walksym']{i}$.
		By induction, we know that $\rat[\walksym']{i} = \ratw{w_i}\geq \edt[]{w_i} = \rat[\walksym^*]{0}$.
		Therefore, $\rat[\walksym']{i+\ell} \geq \rat[\walksym^*]{\ell}$ for all $\ell\leq s-i$, in particular, $\ratw{w} = \rat[\walksym']{s} \geq \rat[\walksym^*]{s-i}$ holds at $w$.
		By \cref{minimal-tlp-and-rtl-properties}, $\rat[\walksym^*]{s-i} > \edt[]{w}$, so $\ratw{w} > \edt[]{w}$ and the claim follows.
	\end{claimproof}
	
	Now let $w\in\calB_\tlpsymb$.
	\Cref{path-realising-ratw} yields a path $\spathsym=w_0\ldots w_n$ in $D_\tlpsymb$ with $w_i=(u_i,t_i)$ for all $i\in[n]$ and $w_0\in\calS_\tlpsymb$ and $w_n=w$.
	Again, we obtain that $\walksym_\spathsym$ is a too-long path and contains a minimal too-long path $\walksym^* = u_i\walksym_\spathsym u_n$, by \cref{tlp-properties-in-tetlp}.

	Since $\walksym^*$ is $\calT$-relaxed too-long, $\rat[\walksym^*]{n-i} > \ldt[]{u_n} = \ldt[]{w}$.
	Moreover, by \cref{ratw-at-least-edt}, $\rat[\walksym_\spathsym]{i} = \ratw{w_i} \geq \edt[]{w_i} = \rat[\walksym^*]{0}$ and, inductively, $\rat[\walksym_\spathsym]{i+\ell} \geq \rat[\walksym^*]{\ell}$.
	Thus, $\ratw{w} = \rat[\walksym_\spathsym]{n} \geq \rat[\walksym^*]{n-i} > \ldt[]{w}$ as desired.
\end{proof}

\begin{remark}
	We note that we only needed to use the set $\tlpsymb$ of all minimal too-long paths in the proof above to get the forward implication.
	For the converse, any set $\tlpsymb'$ of minimal too-long paths would have worked, so, indeed, if $\ratw{w} > \ldt[]{w}$ for all $w\in\boundaries_{\tlpsymb'}$, then all minimal too-long paths in $\tlpsymb'$ are $\calT$-relaxed too-long.
\end{remark}

\subsection{Constructing the Time-Expanded Too-Long Path Graph}\label{subsec:constructing-tetlpg}
In order to use the criterion obtained in \cref{criterion:tetlpg}, we need to be able to construct the time-expanded too-long path graph $\exgraph$.
Evidently, it can be obtained by first enumerating all minimal too-long paths in $\tlpsymb$.
However, we want to avoid this explicit enumeration and this section describes an algorithm that constructs $\exgraph$ in $\bigO(\abs{\A{D_\solsym}} \cdot H)$ time.
While this is still pseudo-polynomial in the instance parameters, it avoids the enumeration of potentially exponentially many minimal too-long paths.

To do so, we first construct a \emph{candidate graph} $\candgraph$ that contains $\exgraph$ as a subgraph and then remove the unnecessary nodes and arcs.
The algorithm to compute the candidate graph is given in \cref{algo:candidate}.
It starts by adding timed copies of all nodes at their earliest time.
It then expands each timed copy along outgoing arcs, so long as the expansion still has positive slack consumption (otherwise, this timed arc cannot be induced by a minimal too-long path).
The nodes reached this way are also considered for expansion if they have non-positive lateness (otherwise, they are already too late).
We now establish the running-time bound and prove that $\exgraph$ is indeed a subgraph.
\begin{algorithm}[tbp]
	\caption{Building the candidate graph}
	\label{algo:candidate}
	\SetKwFunction{constructCandidateGraph}{constructCandidateGraph}
	\Def{\constructCandidateGraph{$D_\solsym$}}{
		$\candgraph \gets (\Set{(u,\edt[]{u}) : u \in \V{D_\solsym}},\emptyset)$\;\label{algo:candidate:basecopies}
		$\labels\gets \V{\candgraph}$\;
\While{$\labels \neq \emptyset$}{
			pop $w=(u,t)$ from $\labels$ with minimal $t$\;
			\For{$u' \in \Nout{u}$}{
				$t' \gets t + \tau({uu'})$, $w'\gets (u',t')$ \;
				\uIf{$\delay(w') > 0$} {
					insert $w'$ into $\V{\candgraph}$ \;
					insert $ww'$ into $\A{\candgraph}$ \;
					\uIf{$\exc(w')\leq 0$}{
						insert $w'$ into $\labels$\;
					}
				}
			}
		}
		\Return{$\candgraph$}\;
	}
\end{algorithm}
\begin{lemma}[Candidate graph]\label{candgraph-contains-exgraph}
	\Cref{algo:candidate} runs in $\bigO(\abs{\A{D_\solsym}}\cdot H)$ time and the computed \emph{candidate graph} $\candgraph$ satisfies $\exgraph \subseteq \candgraph$.
\end{lemma}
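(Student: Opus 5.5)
The plan splits into two parts: bound the running time of \cref{algo:candidate}, then show $\exgraph\subseteq\candgraph$ by induction along minimal too-long paths.

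\textbf{Running time.} Every node ever created has the form $w=(u,t)$ with $u\in\V{D_\solsym}$ and $t\in[H]$. Nodes $(u,t)$ with $t>H$ cannot occur: any inserted node has $\exc(w')\le 0$ or is a leaf. The condition $\exc(w)\le 0$ gives $t\le\ldt[]{u}<H$, so a popped node has $t<H$ and its successors have $t'$ bounded by $H$ plus the maximum arc length. Since $\ldt[]{u}\ge \edt[]{u}$, arc lengths are bounded by $H$, so times stay in $\bigO(H)$. Each pair $(u,t)$ is inserted into $\labels$ at most once, using a set membership check: repeated insertion is idempotent, because $\V{\candgraph}$ is a set and we only push when $w'$ is new. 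It is therefore popped at most once, and popping it costs $\bigO(\abs{\Aout{u}})$ work. Summing over $t\in\bigO(H)$ and $u$ gives $\bigO(\sum_u\abs{\Aout{u}}\cdot H)=\bigO(\abs{\A{D_\solsym}}\cdot H)$. Popping with minimal $t$ can be done with a bucket queue indexed by time, since pushed times are strictly larger than the popped time ($\tau>0$). This keeps queue operations $\bigO(1)$ amortised plus $\bigO(H)$ for scanning buckets. I expect the only delicate point here to be arguing the bucket/duplicate handling so that the bound is not inflated by a logarithmic factor or by multiple pushes.

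\textbf{Containment.} Let $\walksym=u_0\ldots u_n\in\tlpsymb$ and $w_i=(u_i,\rdt{i})$. I would show by induction on $i$ that $w_i\in\V{\candgraph}$, that $w_{i-1}w_i\in\A{\candgraph}$ for $i>0$, and that $w_i$ enters $\labels$ whenever $i<n$.

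For the base case, $\rdt{0}=\edt[]{u_0}$, so $w_0$ is one of the base copies added in line~\ref{algo:candidate:basecopies}, and all base copies are placed in $\labels$. For the step, suppose $w_{i-1}$ was in $\labels$ (for $i-1<n$). Since the loop runs until $\labels$ is empty, $w_{i-1}$ is eventually popped. Because $u_i\in\Nout{u_{i-1}}$, the algorithm forms $w'=(u_i,\rdt{i-1}+\tau(u_{i-1}u_i))=w_i$. By \cref{used-buffer-lateness-minimal-tlp}, $\delay(w_i)\ge\exc(w_n)>0$ for $i\ge1$, so $w_i$ and the arc $w_{i-1}w_i$ are inserted. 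If $i<n$, the same observation gives $\exc(w_i)\le 0$, so $w_i$ is pushed into $\labels$.

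Since $\exgraph=\bigcup_{\walksym\in\tlpsymb}\spathsym_\walksym$, this inclusion gives $\exgraph\subseteq\candgraph$. One subtlety needs checking: a node already present, e.g.\ a base copy, must still be expanded. This holds because the base copies are all in $\labels$, and any other node is pushed when first inserted.
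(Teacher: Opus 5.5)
Your proof is correct and follows the paper's argument. For the running time, the nodes placed in $\labels$ satisfy $t\le\ldt[]{u}<H$ and are extracted via time buckets. For containment, you show $\spathsym_\walksym\subseteq\candgraph$ by induction, using \cref{used-buffer-lateness-minimal-tlp} to get $\delay(w_{i})>0$, and $\exc(w_{i})\le 0$ for $i<n$. Your explicit de-duplication of $\labels$ makes precise what the paper leaves implicit in ``no node is processed twice''.

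The detour bounding the times of leaf nodes via arc lengths is unnecessary, and $\edt[]{u}\le\ldt[]{u}$ by itself does not bound $\tau$ anyway. Only nodes of $\labels$ are ever expanded, and each leaf costs $\bigO(1)$ inside an inner-loop iteration that is already counted.
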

\begin{proof}
	For the time complexity, note that $\labels$ can only contain nodes $(u,t)$ with $0\leq \edt[]{u} \leq t \leq \ldt[]{u} < H$.
	Since we process the nodes with increasing $t$, no node is processed twice and the iterations of the inner for loop are thus bounded by $\abs{\A{D_\solsym}}\cdot H$.
	Extracting the node $(u,t)$ from $\labels$ can be done in constant time by using a separate list for each of the $H$ lengths in question.
	All other operations are easy to do in constant time, yielding the desired running time.
	
	To see that $\exgraph\subseteq\candgraph$, let $\walksym=u_0\ldots u_n\in\tlpsymb$ and let $\spathsym_\walksym = w_0\ldots w_n$.
	We show by induction that $\spathsym_\walksym\subseteq \candgraph$, which also shows $\exgraph\subseteq\candgraph$ since $\walksym\in\tlpsymb$ was arbitrary.
	
	By~\cref{algo:candidate:basecopies}, $w_0 \in \candgraph$.
	Now let $w_i\in \candgraph$ for $i<n$. Then $\exc(w_i)\leq 0$ by \cref{used-buffer-lateness-minimal-tlp}.
	Thus, $w_i$ was added to $\labels$ in some iteration of \cref{algo:candidate} and, when it is removed, $u_{i+1} \in \Nout{u_i}$.
	Since $\delay(w_{i+1}) > 0$ by \cref{used-buffer-lateness-minimal-tlp}, the next node $w_{i+1}$ as well as the arc $w_iw_{i+1}$ are in $\candgraph$, too.
\end{proof}

Note that, by construction, the nodes without incoming arcs are exactly the nodes in $\starts' = \Set{w\in \V{\candgraph} : \delay(w) = 0}$.
Similarly, a superset of $\boundaries_\tlpsymb$ is given by $\boundaries' = \Set{w \in \V{\candgraph} : \exc(w) > 0}$.
Note that in contrast to $\exgraph$, there may be nodes $w \in \candgraph$ with $\exc(w) \leq 0$ that have no outgoing arcs.

We still need to determine which nodes and arcs are actually needed.
We focus on the arcs, since if we can correctly determine these, then we can discard all isolated nodes afterwards (assuming that $\edt[]{u}\leq\ldt[]{u}$ for all $u\in D_\solsym$).

Note that an arc $ww'\in \candgraph$ is part of $\exgraph$ if and only if $ww'$ is on a path $\spathsym_\walksym$ for some $\walksym\in\tlpsymb$.
This path is also part of $\candgraph$ and satisfies the properties listed in \cref{used-buffer-lateness-minimal-tlp-equiv}.
In particular, it starts in $\starts'$ and ends in $\boundaries'$.
Since the lemma is a characterisation, we can also get the converse:
\begin{corollary}
	\label{arcs-in-exgraph}
	An arc $ww'\in \candgraph$ is in $\exgraph$ if and only if there exists a $\starts'\boundaries'$-path $\spathsym=w_0\ldots w_n$ in $\candgraph$ that contains $ww'$ and for which $\delay(w_i) \geq \exc(w_n)$ holds for all $i\in(n]$.
\end{corollary}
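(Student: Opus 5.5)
Both directions go through the correspondence between minimal too-long paths $\walksym$ and their time-expanded versions $\spathsym_\walksym$, combined with \cref{used-buffer-lateness-minimal-tlp-equiv}. The key fact is that arcs of $\candgraph$, like those of $\exgraph$, have the form $(u,t)(u',t+\tau(uu'))$. Consequently, for any path $\spathsym=w_0\ldots w_n$ in $\candgraph$ with $w_i=(u_i,t_i)$ and $w_0\in\starts'$, we have $t_0=\edt[]{u_0}$ and, by induction along the path, $t_i=\rdt[\walksym_\spathsym]{i}$. Hence $\spathsym=\spathsym_{\walksym_\spathsym}$. This identification is the step to establish first.

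For the forward direction, let $ww'\in\A{\exgraph}$. By definition of $\exgraph$ it lies on $\spathsym_\walksym$ for some $\walksym=u_0\ldots u_n\in\tlpsymb$, and by \cref{candgraph-contains-exgraph} the whole path $\spathsym_\walksym$ lies in $\candgraph$. Set $w_i=(u_i,\rdt{i})$. By \cref{used-buffer-lateness-minimal-tlp}, $\delay(w_0)=0$, so $w_0\in\starts'$; also $\exc(w_n)>0$, so $w_n\in\boundaries'$; and $\delay(w_i)\geq\exc(w_n)$ for $i\in(n]$. Moreover $\spathsym_\walksym$ is a path and not merely a walk, since $\exgraph$ is acyclic: the time coordinate strictly increases along arcs because $\tau>0$. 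So $\spathsym_\walksym$ is the required path.

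For the backward direction, take a $\starts'\boundaries'$-path $\spathsym=w_0\ldots w_n$ in $\candgraph$ that contains $ww'$ and satisfies $\delay(w_i)\geq\exc(w_n)$ for $i\in(n]$. By the identification above, $w_i=(u_i,\rdt[\walksym_\spathsym]{i})$. To conclude via \cref{used-buffer-lateness-minimal-tlp-equiv} that $\walksym_\spathsym$ is minimal too-long, I still need two things.
\begin{itemize}
\item $\exc(w_n)>0$. This holds because $w_n\in\boundaries'$.
\item $\exc(w_i)\leq 0$ for $i\in[n)$. This follows from the construction in \cref{algo:candidate}: a node $w_i$ with $i<n$ has an outgoing arc in $\candgraph$, and arcs are only created when expanding nodes popped from $\labels$. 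Nodes are inserted into $\labels$ either as the initial copies $(u,\edt[]{u})$, which have $\exc\leq 0$ under the standing assumption $\edt[]{u}\leq\ldt[]{u}$, or only after the check $\exc(w')\leq 0$.
\end{itemize}
With these, the lemma applies, so $\walksym_\spathsym\in\tlpsymb$. Then $\spathsym=\spathsym_{\walksym_\spathsym}\subseteq\exgraph$, and in particular $ww'\in\A{\exgraph}$.

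The main obstacle is the bookkeeping in this backward direction. First, one must make precise that every node of $\candgraph$ with an outgoing arc has non-positive lateness; this needs the assumption on the initial copies $(u,\edt[]{u})$. Second, one must justify that the time labels of $\spathsym$ coincide with the real arrival times of $\walksym_\spathsym$, which is what makes $\spathsym$ equal to $\spathsym_{\walksym_\spathsym}$ rather than merely having the same projection. The second point follows from the arc structure together with $w_0\in\starts'$; the first requires a brief induction over the run of \cref{algo:candidate}.
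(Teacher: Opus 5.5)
Your proposal is correct and follows essentially the same route as the paper. The forward direction uses $\spathsym_\walksym\subseteq\candgraph$ together with \cref{used-buffer-lateness-minimal-tlp}, and the backward direction checks the hypotheses of \cref{used-buffer-lateness-minimal-tlp-equiv}, where non-positive lateness before the last node comes from \cref{algo:candidate} only expanding such nodes. You also make explicit two points the paper leaves implicit: that the time labels along a path in $\candgraph$ starting in $\starts'$ equal the real arrival times of $\walksym_\spathsym$ (so $\spathsym=\spathsym_{\walksym_\spathsym}$), and that the initial copies require the standing assumption $\edt[]{u}\leq\ldt[]{u}$.
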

\begin{proof}
	The forward implication follows from the preceding argument.
	For the converse, let $\spathsym=w_0\ldots w_n$ be a path in $\candgraph$ that contains $ww'$ and for which $\delay(w_i) \geq \exc(w_n)$ holds for all $i\in(n]$.
	Then $\walksym_\spathsym$ is a walk in $D_\solsym$ for which the properties of \cref{used-buffer-lateness-minimal-tlp-equiv} are satisfied:
	the first is guaranteed by our construction already since we do not add outgoing arcs to nodes with positive lateness and $\exc(w_n)>0$ holds since $w_n\in\boundaries'$.
\end{proof}

We therefore need an efficient way of verifying whether an arc $ww'\in\candgraph$ is on such a path.
This means we need an $\starts'w$-path $\spathsym_\ell$ and a $w'\boundaries'$-path $\spathsym_r$ that, when combined, satisfy the condition of the corollary.
But this means that, for the prefix $\spathsym_\ell$, a larger slack consumption is always better.
Thus, we say that $w$ is \emph{$s$-reachable} if there exists an $\starts'w$-path in $\candgraph$ whose nodes, except the first, all have slack consumption at least $s$.
Thus, we want to compute the best such $s$, which we call
\begin{displaymath}
	\capacity(w) = \sup\Set{s : w \text{ is $s$-reachable}}.
\end{displaymath}
In particular, $\capacity(w)=\infty$ for all $w\in\starts'$ since they are $s$-reachable for any value of $s$.

For the suffix $\spathsym_r$, our goal is to be as unrestrictive as possible.
Thus, lower values of $\exc(w_n)$ for the end node $w_n\in\boundaries'$ are desirable, as long as we can reach them by a path whose minimum slack consumption is at least this value.
Similarly to the above, we call $w'$ \emph{$\ell$-extendable} if there exists a $w'\boundaries'$-path in $\candgraph$ ending at $w_n\in\boundaries'$ such that the slack consumption of all its nodes is at least $\exc(w_n)=\ell$.
We define
\begin{displaymath}
	\requirement(w) = \min \Set{\ell : w \text{ is $\ell$-extendable}}.
\end{displaymath}
Here and below, we use the convention $\min\emptyset=+\infty$.
In particular, $\requirement(w)=\infty$ if $w\in\starts'$.

These two properties let us characterise which arcs of $\candgraph$ are actually part of $\exgraph$.
\begin{theorem}\label{marking-rule} 
	An arc $ww' \in \A{\candgraph}$ is in $\exgraph$ if and only if
	\begin{equation}\label{marking-rule:eq}
		\requirement(w') < \infty \qquad \text{and} \qquad \capacity(w) \geq \requirement(w').
	\end{equation}
\end{theorem}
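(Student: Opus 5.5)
We reduce the statement to \cref{arcs-in-exgraph}: the arc $ww'$ lies in $\exgraph$ if and only if some $\starts'\boundaries'$-path $\spathsym=w_0\ldots w_n$ in $\candgraph$ contains $ww'$ and satisfies $\delay(w_i)\geq\exc(w_n)$ for all $i\in(n]$. So it suffices to show that such a path exists if and only if \eqref{marking-rule:eq} holds. The argument splits $\spathsym$ at the arc $ww'$ into a prefix $\spathsym_\ell=w_0\ldots w$ and a suffix $\spathsym_r=w'\ldots w_n$. The prefix is then compared with the definition of $\capacity(w)$ and the suffix with that of $\requirement(w')$.

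For the forward direction, take such a path $\spathsym$ and set $\ell=\exc(w_n)>0$. The suffix $\spathsym_r$ is a $w'\boundaries'$-path all of whose nodes have slack consumption at least $\ell$, so $w'$ is $\ell$-extendable. Hence $\requirement(w')\leq\ell<\infty$.

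The prefix $\spathsym_\ell$ is an $\starts'w$-path, and all its nodes except $w_0$ have slack consumption at least $\ell$. So $w$ is $\ell$-reachable, and $\capacity(w)\geq\ell\geq\requirement(w')$.

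One subtlety: if $w=w_0\in\starts'$, then $\capacity(w)=\infty$ and the inequality is trivial. In general, $\capacity(w)$ constrains $\delay$ only on nodes after $w_0$, and this matches the corollary, which also constrains only $i\in(n]$.

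For the backward direction, assume $\requirement(w')=\ell<\infty$ and $\capacity(w)\geq\ell$. The minimum in the definition of $\requirement$ is attained, since slack and lateness values are integers and the set is nonempty. So there is a $w'\boundaries'$-path $\spathsym_r$ ending at some $w_n$ with $\exc(w_n)=\ell$ and all slack consumptions at least $\ell$.

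For the prefix, $\capacity(w)$ is a supremum, so we must check it is attained at the value $\ell$. If $\capacity(w)=\infty$, then $w$ is $s$-reachable for arbitrarily large $s$, in particular for $s\geq\ell$. If $\capacity(w)$ is finite, it is a maximum over finitely many integer-valued path minima, since $\candgraph$ is finite and acyclic. In both cases $w$ is $\ell$-reachable, which gives an $\starts'w$-path $\spathsym_\ell$ whose non-initial nodes have slack consumption at least $\ell$.

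Concatenating $\spathsym_\ell$, the arc $ww'$, and $\spathsym_r$ gives a walk in $\candgraph$. It is a path because $\candgraph$ is acyclic: every arc $(u,t)(u',t')$ strictly increases $t$ by $\tau>0$. The walk starts in $\starts'$ and ends in $\boundaries'$. Every node after the first has $\delay\geq\ell=\exc(w_n)$: this holds for $w'$ through the suffix condition, and for $w$ through the prefix condition unless $w=w_0$. The corollary then yields $ww'\in\exgraph$.

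The main obstacle is the boundary case in which $w'$ itself lies in $\boundaries'$ or $w$ lies in $\starts'$. In the first case, $\requirement(w')\leq\exc(w')$ via the trivial path; one must check that the trivial path condition $\delay(w')\geq\exc(w')$ is what the corollary demands. It is, because the corollary includes $i=n$. In the second case, no slack condition is imposed on $w_0$, consistent with $\capacity=\infty$. The attainment of the supremum is the other point to verify, and the finiteness and acyclicity of $\candgraph$ settle it.
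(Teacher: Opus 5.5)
Your proof is correct and follows the paper's argument. Both directions reduce to \cref{arcs-in-exgraph} by splitting the path at $ww'$ into a prefix that witnesses reachability for $\capacity(w)$ and a suffix that witnesses extendability for $\requirement(w')$, and the converse concatenates realising paths. Your extra checks fill in details the paper leaves implicit: that the supremum defining $\capacity(w)$ is attained or infinite, and that acyclicity of $\candgraph$ makes the concatenation a path.
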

\begin{proof}
	Similarly, if $ww'\in \A{\candgraph}$ is in $\exgraph$, then \cref{arcs-in-exgraph} yields a path $\spathsym=w_0\ldots w_n$ in $\candgraph$ that contains $ww'$ and for which $\delay(w_i) \geq \exc(w_n)$ holds for all $i\in(n]$.
	In particular, $w$ is $\exc(w_n)$-reachable and $w'$ is $\exc(w_n)$-extendable, yielding $\capacity(w) \geq \requirement(w')$ and $\requirement(w') < \infty$.
	
	Conversely, assume \cref{marking-rule:eq} holds for the arc $ww'$.
	Let $\spathsym_\ell=w_0\ldots w_k$, with $w_k=w$, be a path realising $\capacity(w)$ and $\spathsym_r=w_{k+1}\ldots w_n$, with $w_{k+1}=w'$, be a path realising $\requirement(w')$, which exists since the value is finite by assumption.
	Then $\delay(w_i) \geq \exc(w_n) = \requirement(w')$ for all $i\geq k+1$ and $\delay(w_i) \geq \capacity(w) \geq \requirement(w') = \exc(w_n)$ for $0<i\leq k$.
	Hence, the path $\spathsym_{\ell}\spathsym_r$ is a $\starts'\boundaries'$-path that contains $ww'$ and satisfies the condition of \cref{arcs-in-exgraph}, so $ww'\in\exgraph$.
\end{proof}

Let us illustrate this on a small example.
\begin{example}
	Consider the dispatch-node graph $D_\solsym$ in~\cref{fig:markingrule:dispatchgraph} and the corresponding candidate graph $\candgraph$ in~\cref{fig:markingrule:candidategraph}.
	The minimal too-long paths are $u_1u_2$ and $u_0u_1u_3$:
	the path $u_1u_3$ arrives at time $4$ and is feasible, whereas $u_0u_1u_3$ arrives at time $5 > \ldt[]{u_3}$.
	The check of \cref{marking-rule} keeps their time-expanded arcs and rejects the arc from $(u_1,2)$ to $(u_2,5)$.
	This also lets us remove the now isolated nodes $(u_2,5)$, $(u_3,2)$, and $(u_2,2)$.
	
	\begin{figure}[tbp]
		\centering
		\begin{subfigure}[b]{0.48\textwidth}
			\centering
			\begin{tikzpicture}
				\node[draw,label={[align=left]left:{$[1,4]$}}] (u0) {$u_0$};
				\node[draw,right=of u0, label={[align=left]above:{$[1,4]$}}] (u1) {$u_1$};
				\node[draw, right=of u1,label={[align=left]right:{$[2,3]$}}] (u2) {$u_2$};
				\node[draw, below=0.65cm of u2,
				label={[align=left]below:{$[2,4]$}}] (u3) {$u_3$};
				
				\draw[->] (u0) to node[below] {$1$} (u1);
				\draw[->] (u1) to node[below] {$3$} (u2);
				\draw[->] (u1) to node[below left] {$3$} (u3);
				
			\end{tikzpicture}
			\caption{A dispatch-node graph $D_\solsym$.}
			\label{fig:markingrule:dispatchgraph}
		\end{subfigure}\hfill
		\begin{subfigure}[b]{0.48\textwidth}
			\centering
			\begin{tikzpicture}[x=1.65cm,y=1.5cm,
				every node/.style={font=\small}]
				\node[draw,label={above:{$[\infty,\infty]$}}] (u0t1) at (0,1) {$u_0,1$};
				\node[draw,label={above:{$[1,1]$}}] (u1t2) at (1.5,1) {$u_1,2$};
				\node[draw,label={above:{$[2,1]$}}] (u2t5) at (3,1.5) {$u_2,5$};
				\node[draw,label={below:{$[1,1]$}}] (u3t5) at (3,0.5) {$u_3,5$};
				\node[draw,label={above:{$[\infty,\infty]$}}] (u1t1) at (0,-0.5) {$u_1,1$};
				\node[draw,label={above:{$[1,2]$}}] (u2t4) at (1.5,-0.5) {$u_2,4$};
				\node[draw,label={above:{$[\infty,\infty]$}}] (u2t2) at (1.5,-1.5) {$u_2,2$};
				\node[draw,label={above:{$[\infty,\infty]$}}] (u3t2) at (3,-1.5) {$u_3,2$};

\draw[->] (u0t1) to node[above] {$\checkmark$} (u1t2);
				\draw[->] (u1t1) to node[above] {$\checkmark$} (u2t4);
				\draw[->] (u1t2) to node[above] {$\times$} (u2t5);
				\draw[->] (u1t2) to node[above] {$\checkmark$} (u3t5);
			\end{tikzpicture}
			\caption{The corresponding candidate graph $\candgraph$.}
			\label{fig:markingrule:candidategraph}
		\end{subfigure}
		
		\caption{Illustration of the marking rule in~\cref{marking-rule}.
			The nodes $u$ in the dispatch-node graph are labelled with $[\edt[]{u},\ldt[]{u}]$.
			In the candidate graph, the nodes $w$ are labelled by $[\requirement(w),\capacity(w)]$.}
	\end{figure}
\end{example}

To complete the algorithm, we want to efficiently determine $\capacity(w)$ and $\requirement(w)$.
\begin{observation}
	\label{cap-req-recurrences}
	The following recurrences determine $\capacity(w)$ and $\requirement(w)$.
	\begin{align}\label{eq:cap-recurrence}
		\capacity(w) & = \begin{cases}
			\infty & w \in \starts' \\
			\min \Set{\delay(w), \max_{w' \in \Nin{w}} \capacity(w')} & w \notin \starts'
		\end{cases}\\\label{eq:req-recurrence}
		\requirement(w) &= \begin{cases}
			\exc(w) & w \in \boundaries' \\
			\min \Set{\requirement(w') : w'\in\Nout{w},\, \delay(w)\geq \requirement(w')} & w\notin\boundaries'
		\end{cases}
	\end{align}
	
	Using these, we can compute the values of $\capacity(w)$ and $\requirement(w)$ for all $w \in \V{\candgraph}$ in $\bigO(\abs{\V{\candgraph}} + \abs{\A{\candgraph}})$ time.
\end{observation}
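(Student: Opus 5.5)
The plan is to verify each recurrence separately by unfolding the definitions of $s$-reachability and $\ell$-extendability along the last arc (for $\capacity$) or the first arc (for $\requirement$) of a witnessing path. We then compute both quantities by dynamic programming over a topological order of $\candgraph$. This graph is acyclic because every arc $(u,t)(u',t')$ satisfies $t'=t+\tau(uu')>t$.

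For \eqref{eq:cap-recurrence}, take $w\in\starts'$: the trivial path has no nodes besides its first, so $w$ is $s$-reachable for every $s$ and $\capacity(w)=\infty$. For $w\notin\starts'$ we prove two inequalities.
\begin{itemize}
	\item \emph{Upper bound.} Any $\starts'w$-path witnessing $s$-reachability has length at least one, since $w\notin\starts'$. Hence it contains $w$ as a non-first node, giving $\delay(w)\ge s$. Its prefix ending at the predecessor $w'\in\Nin{w}$ witnesses $s$-reachability of $w'$, giving $s\le\capacity(w')$.
	\item \emph{Lower bound.} If $s\le\min\{\delay(w),\capacity(w')\}$ for some $w'\in\Nin{w}$, append the arc $w'w$ to a path witnessing $s$-reachability of $w'$.
\end{itemize}
Acyclicity guarantees that this concatenation is still a path. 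One subtlety concerns $\capacity(w')$ as a supremum: the set of achievable $s$ is downward closed and bounded by finitely many slack values along finitely many paths, so the supremum is attained (or equals $\infty$). Every $w\notin\starts'$ has an in-neighbour by construction, since nodes are only added to $\candgraph$ together with an incoming arc. Hence the maximum is over a nonempty set.

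For \eqref{eq:req-recurrence}, take $w\in\boundaries'$. Nodes with $\exc>0$ are never inserted into $\labels$, so $w$ has no outgoing arcs. The only $w\boundaries'$-path is then the trivial one, and it requires $\delay(w)\ge\exc(w)$. This holds automatically because $\ldt[]{u}\ge\edt[]{u}$ is assumed. Thus $\requirement(w)=\exc(w)$. For $w\notin\boundaries'$, we again argue both directions.
\begin{itemize}
	\item Any witnessing path for $\ell$-extendability with end lateness $\ell$ starts with an arc $ww'$. Its suffix witnesses $\ell$-extendability of $w'$, so $\ell\ge\requirement(w')$. We also have $\delay(w)\ge\ell\ge\requirement(w')$.
	\item Conversely, for any $w'\in\Nout{w}$ with $\delay(w)\ge\requirement(w')$, prepend $ww'$ to a path realising $\requirement(w')$. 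This gives a path witnessing $\requirement(w')$-extendability of $w$.
\end{itemize}
Taking minima yields the recurrence.

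The main obstacle is the first bullet of the $\requirement$ argument. The minimum over $\ell$ for $w$ may be attained by a suffix through $w'$ whose end lateness is not $\requirement(w')$ but some larger value. This is harmless, because we only need $\requirement(w')\le\ell\le\delay(w)$, which keeps $w'$ in the admissible set and bounds the recurrence's minimum above by $\ell$.

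For the running time, compute $\capacity$ in topological order and $\requirement$ in reverse topological order. Each node and arc is touched a constant number of times, giving $\bigO(\abs{\V{\candgraph}}+\abs{\A{\candgraph}})$. A topological order is available for free by sorting on the time component, which is bounded by $H$ and can be bucketed as in \cref{candgraph-contains-exgraph}, or by a standard DFS.
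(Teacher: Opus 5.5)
Your proof is correct and follows essentially the same route as the paper: you unfold a witnessing path along its last arc for $\capacity$ and its first arc for $\requirement$, then evaluate both recurrences by dynamic programming in (reverse) topological order of the acyclic graph $\candgraph$. Your version fills in details the paper's sketch leaves implicit. These are that the supremum defining $\capacity$ is attained, that non-start nodes have in-neighbours, that nodes of $\boundaries'$ are sinks given $\edt[]{u}\le\ldt[]{u}$, and that a suffix may a priori end with lateness above $\requirement(w')$; none of them changes the argument.
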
   
\begin{proof}
	We start with the recurrence for $\capacity(w)$ and inductively obtain that the formula is correct, where this is true for all nodes in $\starts'$.
	For the remaining nodes, $\capacity(w)$ is determined by a path to $w$ whose nodes have a minimum slack consumption that is as large as possible.
	Thus, it uses a predecessor $w'$ of maximum $\capacity(w')$, without loss of generality, and $\capacity(w) = \min\Set{\capacity(w'),\delay(w)}$.
	
	For $\requirement(w)$, we again see that the value for $w\in\boundaries'$ is set correctly since $\delay(w)\geq \exc(w)$.
	For the remaining nodes, the path determining $\requirement(w)$ (if it exists) uses a successor $w'$ that minimises $\requirement(w')$ subject to $\delay(w)\geq \requirement(w')$, and its requirement is then identical.
	If $\requirement(w)=\infty$, then any path to a node in $\boundaries'$ has insufficient slack consumption; in particular, $\delay(w) < \requirement(w')$ for all successors $w'$.

	Since for either recurrence each node and arc is processed exactly once, the procedure completes in $\bigO(\abs{\V{\candgraph}} + \abs{\A{\candgraph}})$ time.
\end{proof}

To summarise this section, we obtain \cref{algo:exgraph} that constructs $\exgraph$.
Evidently, computing $\capacity(w)$ and $\requirement(w)$ requires no more operations than determining $\candgraph$ in the first place, so with~\cref{candgraph-contains-exgraph}, \cref{marking-rule}, and \cref{cap-req-recurrences}, \cref{algo:exgraph} computes $\exgraph$ in $\bigO(\abs{\A{D_\solsym}} \cdot H)$ time. \begin{algorithm}[tbp]
	\caption{Computing $\exgraph$.}
	\label{algo:exgraph}
	\SetKwFunction{constructTimeExpandedGraph}{constructTimeExpandedGraph}
	\Def{\constructTimeExpandedGraph{$D_\solsym$}}{
		$\candgraph\gets \constructCandidateGraph{$D_\solsym$}$\;
		Compute $\capacity(w)$ and $\requirement(w)$ for all $w\in\candgraph$ using \eqref{eq:cap-recurrence} and \eqref{eq:req-recurrence}\;
		$\exgraph \gets (\emptyset,\emptyset)$\;
		\For{$ww'\in\A{\candgraph}$}{
			\If{$\requirement(w')<\infty$ \textbf{and} $\capacity(w)\geq \requirement(w')$}{
				Add $w$, $w'$, and $ww'$ to $\exgraph$\;
			}
		}
		\Return{$\exgraph$}\;
	}
\end{algorithm}

We further improve the practical running time of~\cref{algo:exgraph} by preprocessing $D_\solsym$ to remove nodes and arcs that cannot lie on too-long paths.
The idea is to compute for each $u \in \V{D_\solsym}$ the latest real arrival time of any walk in $D_\solsym$, i.e.,
\[
    t_\mathrm{max}(u) = \max \Set{\rdt{n} : \exists \walksym = u_0\ldots u_n,\ u_n=u},
\]
and similarly the earliest real arrival time a walk must have at $u$ to be extendable into a too-long path, i.e.,
\[
    t_\mathrm{min}(u) = \min\Set{\ldt[]{u_n} - \tau(\walksym) : \exists \walksym = u_0\ldots u_n,\ u_0=u}.
\]
Evidently, if $t_\mathrm{max}(u) \leq t_\mathrm{min}(u)$, it cannot lie on a too-long path and can be removed.
If $D_\solsym$ is acyclic, these values are easy to compute by forward and backward propagation.
Otherwise, we can first determine the condensation graph of $D_\solsym$ by merging every strongly connected component into a single node, and then determine these values on the resulting graph. Every node in a strongly connected component containing a cycle is on a too-long path in any case and thus we set $t_\mathrm{min}(u) =- \infty$ and $t_\mathrm{max}(u) = \infty$ for them.

\subsection{Solving the ORP using \texorpdfstring{$\exgraph$}{D_W}}\label{subsec:solving-ORP-graph}

Similarly to \cref{lemma:relaxedtoolongtimepoints}, we now prove another alternative characterisation that lends itself better to the formulation of an IP.
\begin{lemma}\label{characterisation-IP-calW-rtl}
	All minimal too-long paths are $\calT$-relaxed too-long if and only if there exist time points $t(w'w)\in\calT_{w'w}$ for all $w'w\in\A{\exgraph}$ and $\pi(w)$ for all $w\in\V{\exgraph}$ such that
	\begin{enumprop}
		\item $\pi(w) \leq \edt[]{w}$ for all $w\in\starts_\tlpsymb$,
		\item $t(w'w) \leq \pi(w')$ for all $w'w\in\A{\exgraph}$,\label{prop:calW-rtl-2}
		\item $t(w'w) + \tau(w'w) \geq \pi(w)$ for all $w'w\in\A{\exgraph}$, and
		\item $\pi(w) > \ldt[]{w}$ for all $w\in\boundaries_\tlpsymb$.
	\end{enumprop}
	Additionally, for $w'w\in\A{\exgraph}$ with $w'=(u',t')$, $t(w'w)\in I_{w'w} = (t'-\requirement(w),t']$.
\end{lemma}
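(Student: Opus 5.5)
The plan is to reduce everything to \cref{criterion:tetlpg}, mirroring the proof of \cref{lemma:relaxedtoolongtimepoints}, but on $\exgraph$ instead of a single walk.

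\emph{Forward direction.} Assume all minimal too-long paths are $\calT$-relaxed too-long. By \cref{criterion:tetlpg}, $\ratw{w} > \ldt[]{w}$ for all $w\in\boundaries_\tlpsymb$. Set $\pi(w) = \ratw{w}$ and $t(w'w) = \rounddown{\ratw{w'}}{w'w}\in\calT_{w'w}$.
\begin{itemize}
\item Condition~(i) holds with equality, since $\starts_\tlpsymb$ consists exactly of the nodes without predecessors (\cref{SW-BW-characterisation}).
\item Condition~(ii) holds because rounding down never increases a value.
\item Condition~(iii) holds because $\ratw{w}$ is the minimum over predecessors of $\rounddown{\ratw{w'}}{w'w}+\tau(w'w)$.
\item Condition~(iv) is exactly the criterion.
\end{itemize}

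\emph{Backward direction.} Given $t$ and $\pi$ satisfying (i)--(iv), show by induction along a topological order of the acyclic graph $\exgraph$ that $\pi(w)\leq\ratw{w}$.
\begin{itemize}
\item For $w\in\starts_\tlpsymb$, this is condition~(i).
\item Otherwise, take the predecessor $w'$ attaining the minimum in the definition of $\ratw{w}$. Condition~(ii) and the induction hypothesis give $t(w'w)\leq\pi(w')\leq\ratw{w'}$. Since $t(w'w)\in\calT_{w'w}$, it follows that $t(w'w)\leq\rounddown{\ratw{w'}}{w'w}$. Then condition~(iii) yields $\pi(w)\leq t(w'w)+\tau(w'w)\leq\ratw{w}$.
\end{itemize}
Combining this with (iv) gives $\ratw{w}>\ldt[]{w}$ on $\boundaries_\tlpsymb$, and \cref{criterion:tetlpg} finishes. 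Only the inequalities $\pi(w)\le\ratw{w}$ are needed, so $\pi$ need not equal the propagated values.

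\emph{Interval statement.} This is where I expect the main work. The claim should hold for any feasible choice of $t$, in analogy to \cref{lemma:relaxedtoolongtimepoints}. Let $w'=(u',t')$ and $w=(u,t'+\tau(w'w))$.

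\emph{Upper bound.} Follow a path in $\exgraph$ from some start node $w_0=(u_0,t_0)\in\starts_\tlpsymb$ to $w'$; one exists since every node lies on some $\spathsym_\walksym$. Chaining (i)--(iii) along this path gives $\pi(w')\leq t_0+\tau(\text{path}) = t'$, using that the time coordinates along arcs of $\exgraph$ increase by exactly $\tau$. Then (ii) gives $t(w'w)\leq t'$.

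\emph{Lower bound.} Since $\requirement(w)<\infty$, there is a path from $w$ to some $w_n\in\boundaries'$ in $\candgraph$ with $\exc(w_n)=\requirement(w)$ and all slack consumptions at least this value. I need this path to lie in $\exgraph$, so that (ii)--(iv) apply along it. To justify this, I would argue via \cref{marking-rule}: the arc $w'w$ is in $\exgraph$, so $\capacity(w')\geq\requirement(w)$. Concatenating a prefix realising $\capacity(w')$ with the arc $w'w$ and the suffix realising $\requirement(w)$ gives a path satisfying \cref{arcs-in-exgraph}. Hence every arc of the suffix lies in $\exgraph$ and $w_n\in\boundaries_\tlpsymb$.

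Along this suffix, (iii) and (ii) give $\pi(w_n)\leq t(w'w)+\tau(w'w)+\tau(\text{suffix})$. Condition~(iv) gives $\pi(w_n)>\ldt[]{w_n}$, and $\ldt[]{w_n} = t_n-\requirement(w)$ where $t_n = t'+\tau(w'w)+\tau(\text{suffix})$. Rearranging yields $t(w'w)>t'-\requirement(w)$.

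The delicate point is confirming that the suffix realising $\requirement(w)$ actually lies inside $\exgraph$; \cref{marking-rule} is what should justify this.
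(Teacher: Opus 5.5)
Your proof is correct. The forward and backward directions and the upper bound $t(w'w)\le t'$ match the paper's argument exactly. The only difference is in the lower bound of the interval statement.

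The paper fixes a path $\walksym\in\tlpsymb$ of minimum lateness $\ell$ among those whose expansion $\spathsym_\walksym$ contains $w'w$. It obtains $t(w'w)>t'-\ell$ by chaining the conditions along the suffix of $\spathsym_\walksym$, which lies in $\exgraph$ by definition. It then proves $\ell=\requirement(w)$: that suffix shows $w$ is $\ell$-extendable, and any smaller $\ell'$ would, by \cref{used-buffer-lateness-minimal-tlp-equiv}, give a minimal too-long path through $w'w$ of smaller lateness.

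You argue in the other direction. You start from a suffix in $\candgraph$ that realises $\requirement(w)$. You then use the prefix--suffix concatenation behind \cref{marking-rule} and \cref{arcs-in-exgraph} to show that this suffix lies in $\exgraph$ and ends in $\boundaries_\tlpsymb$. This is valid; the concatenation is a genuine path because time coordinates strictly increase along arcs of $\candgraph$.

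Both routes rest on the same splicing argument via \cref{used-buffer-lateness-minimal-tlp-equiv}. Yours reuses already proved results and never needs to identify $\requirement(w)$ with a minimum lateness. The paper's stays with paths of $\tlpsymb$ and gets that identification as a by-product.
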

\begin{proof}
	Note that all minimal too-long paths are $\calT$-relaxed too-long if and only if the relaxed arrival times $\ratw{w}$ at the boundary nodes $w\in\boundaries_\tlpsymb$ satisfy $\ratw{w} > \ldt[]{w}$, by \cref{criterion:tetlpg}.
	
	For the forward implication, we use $\pi(w) = \ratw{w}$ and $t(w'w) = \rounddown{\ratw{w'}}{w'w}$.
	Then, for $w\in\starts_\tlpsymb$, $\pi(w) = \ratw{w} = \edt[]{w}$ and, for $w\in\boundaries_\tlpsymb$, $\pi(w) = \ratw{w} > \ldt[]{w}$.
	For $w'w\in\A{\exgraph}$, $t(w'w) = \rounddown{\ratw{w'}}{w'w} \leq \ratw{w'} = \pi(w')$ and $t(w'w) + \tau(w'w) = \rounddown{\ratw{w'}}{w'w} + \tau(w'w) \geq \ratw{w} = \pi(w)$.
	
	For the converse, let $t$ and $\pi$ satisfy the four specified properties.
	Then, inductively, $\pi(w) \leq \ratw{w}$ for all $w\in\V{\exgraph}$, which implies that $\ratw{w} \geq \pi(w) > \ldt[]{w}$ for all $w\in\boundaries_\tlpsymb$.
	For $w\in\starts_\tlpsymb$ this is clear and for the other nodes $w$ we note that \cref{prop:calW-rtl-2} implies that $t(w'w) \leq \rounddown{\pi(w')}{w'w}$ since $t(w'w)\in\calT_{w'w}$.
	Using this, we obtain $\pi(w) \leq t(w'w) + \tau(w'w) \leq \rounddown{\pi(w')}{w'w} + \tau(w'w) \leq \rounddown{\ratw{w'}}{w'w} + \tau(w'w)$.
	Since this holds for all predecessors $w'$ of $w$, $\pi(w) \leq \ratw{w}$.
	
	For the final part, let $w'w\in\A{\exgraph}$ with $w'=(u',t')$.
	Choose $\walksym\in\tlpsymb$ of minimum lateness $\ell$ among those whose time-expanded path $\spathsym_\walksym=w_0\ldots w_n$ contains $w'w$.
	Inductively, $t(w'w) \leq \pi(w') \leq \pi(w_0) + \tau(\spathsym_\walksym w') \leq \edt[]{w_0} + \tau(\spathsym_\walksym w') = t'$.
	
	For the other direction, $t(w'w) \geq \pi(w_n) - \tau(w'\spathsym_\walksym) > \ldt[]{w_n} - \tau(w'\spathsym_\walksym) = t' - \exc(w_n) = t'-\ell$.
	By \cref{used-buffer-lateness-minimal-tlp}, the suffix $w\spathsym_\walksym$ shows that $w$ is $\ell$-extendable.
	It cannot be $\ell'$-extendable for some $\ell'<\ell$ since, otherwise, we can extend the prefix $\spathsym_\walksym w$ by that suffix, preserving $w'w$, to obtain a minimal too-long path of lower lateness by \cref{used-buffer-lateness-minimal-tlp-equiv}.
	Thus, $\requirement(w)=\ell$ and $t(w'w)\in(t'-\requirement(w),t']$.
\end{proof}
Based on this reformulation, we can solve ORP with the mixed-integer program~\eqref{mip:graphbased}.
It contains continuous variables $\pi_w$ to decide $\pi(w)$ and binary variables $x_{w'wt}$ which represent the desired time points $t(w'w)$.
Let $I_a$ denote the union of all intervals $I_{w'w}$ where the underlying arc $a(w'w)$ is $a$.
\begin{subequations}\label{mip:graphbased}
	\begin{alignat}{20}
		& \min \; & \sum_{a \in \A{D}}\sum_{t \in I_a} y_{at} \\
		& \mathrm{s.t.} & \sum_{t \in I_{w'w}} x_{w'wt} & = &\;& 1 &\quad& \forall w'w \in \A{\exgraph}\\
		& & x_{w'wt} & \leq && y_{a(w'w)t} && \forall w'w \in \A{\exgraph},\ t \in I_{w'w}\\
		& & \sum_{t \in I_{w'w}} t x_{w'wt} & \leq && \ratw{w'} && \forall w'w \in \A{\exgraph}\\
		& & \sum_{t \in I_{w'w}} t x_{w'wt} + \tau({a(w'w)}) & \geq && \ratw{w} && \forall w'w \in \A{\exgraph}\\
		&& \ratw{w}& =&&  t & & \forall w = (u,t) \in \starts\\
		&& \ratw{w}& \geq && \ldt[]{u} + 1& & \forall w = (u,t) \in \boundaries\\
		&& y_{at}   &\in && \{0,1\} && \forall a \in \A{D},\ t \in I_a \\
		&& x_{w'wt} & \in &&  \{0,1\} && \forall w'w \in \A{\exgraph}\\
		&& \ratw{(u,t)} &\geq && 0 && \forall (u,t) \in \V{\exgraph}
	\end{alignat}
\end{subequations}

While this model can be significantly more compact than~\eqref{mip:pathbased}, its size is still strongly dependent on the scaling of the time parameters.
So it seems promising to use dynamic discretisation discovery to solve ORP as well, which is the topic of the next subsection.

\subsection{A Dynamic Discretisation Discovery Algorithm for the ORP}\label{subsec:solving-ORP-DDD}

Like before, we want to use DDD to replace the time points by intervals in order to obtain a smaller model.
Thus, we again use a time discretisation on the arcs of the original network, which we denote by $\calD$ here to distinguish it from our previous ones.
The relaxation will, once again, use the intervals obtained by the discretisation and allow behaviours that are realisable for some point in the interval.

We shall be fairly brief in this section since none of the ideas used here are really different from what we did for the service network design problem.
When differences appear, we point them out.

To get started, let us state the problem and the relaxation we are trying to solve by using the reformulation \cref{characterisation-IP-calW-rtl} (or the IP~\eqref{mip:graphbased}).
The objective of the ORP is to add as few time points as possible, so our goal is to \enquote{consolidate} as many arcs $w'w$ that correspond to the same arc $a(w'w)$ as possible and \enquote{solve} them by adding a single time point.
Thus, a solution is a partition $\orpsol$ of the arcs $\A{\exgraph}$ such that all arcs $w'w$ in a part $\phi\in\orpsol$ correspond to the same arc $a(w'w)$.
We write $\phi(w'w)$ for the part $\phi\in\orpsol$ containing $w'w$.

For feasibility, we need time points $t\colon \A{\exgraph}\to\NN$ and $\pi\colon \V{\exgraph}\to\NN$ that satisfy the conditions of \cref{characterisation-IP-calW-rtl}.
Since we want all arcs in the same part of $\orpsol$ to have the same $t$-value, it suffices to specify time points for each part of the partition.

Hence, $\orpsol$ is feasible if time points $t\colon \orpsol\to\NN$ and $\pi\colon \V{\exgraph}\to\NN$ exist such that
\begin{enumprop}
	\item $\pi(w)\leq \edt[]{w}$ for all $w\in\starts_\tlpsymb$,
	\item $t(\phi(w'w)) \leq \pi(w')$ for all $w'w\in\A{\exgraph}$,
	\item $t(\phi(w'w)) + \tau(w'w) \geq \pi(w)$ for all $w'w\in\A{\exgraph}$, and
	\item $\pi(w) > \ldt[]{w}$ for $w\in\boundaries_\tlpsymb$.
\end{enumprop}
Recall that these conditions imply that $t(\phi(w'w))\in I_{w'w}$.

In the relaxation \rorp, we use the time discretisation $\calD$ and assign each part~$\phi$ of the partition $\orpsol$ an interval $h(\phi)\in \calD_{a(\phi)}^I$, where $a(\phi)$ is the arc in $D$ corresponding to~$\phi$.
We now require that $h(\phi(w'w))\cap I_{w'w}\neq \emptyset$ for all $w'w\in\A{\exgraph}$ together with
\begin{enumprop}
	\item $\pi(w)\leq \edt[]{w}$ for all $w\in\starts_\tlpsymb$,
	\item $\min h(\phi(w'w)) \leq \pi(w')$ for all $w'w\in\A{\exgraph}$, \label{prop:orp-ddd-2}
	\item $\max h(\phi(w'w)) + \tau(w'w) \geq \pi(w)$ for all $w'w\in\A{\exgraph}$, and \label{prop:orp-ddd-3}
	\item $\pi(w) > \ldt[]{w}$ for $w\in\boundaries_\tlpsymb$.
\end{enumprop}

Clearly, this is a relaxation (simply assign the interval containing a time point) and it is tight for the full discretisation (then the interval minimum and maximum are just the same single point).
To solve it, we can use a mixed-integer program similar to~\eqref{mip:graphbased}, selecting intervals instead of time points and replacing the coefficient $t$ in the linking constraints by the upper and lower bounds of the intervals.

Now we need to find our \enquote{problems}, so an equivalent to the too-long paths in the dispatch-node graph, and how to refine the discretisation to eliminate them, so an equivalent of making them $\calT$-relaxed too-long.
We note that an analogous construction to the one we presented before would be plausible, though we deviate slightly from it.
Before we describe how we proceed in this setting, we want to describe the approach we take now in terms of the \snd.

In the \snd, we could have defined a different graph to verify implementability than the dispatch-node graph:
it would have also been possible to construct a \emph{consolidation graph} that has a node for every consolidation and connects consolidations that appear consecutively on a path.
Then analogous definitions of ($\calT$-relaxed) too-long paths can be derived as well as the corresponding characterisations.
The main difference between these two options is that the consolidation graph has a smaller size, but also \enquote{solves} fewer problems, resulting in potentially more iterations.
For the \snd, the dispatch-node graph was computationally advantageous, but for the ORP, the graph $\exgraph$ is significantly larger, so we opt for the more compact representation given by the consolidation graph.

Since the consolidations correspond to parts of the partition here, the graph defined below has these as its nodes.
\begin{definition}[Partition-linking graph]
	Given a solution $\orpsol$, its \emph{partition-linking graph} $D_\orpsol$ is  given by
	\begin{align*}
		\V{D_\orpsol} &= \orpsol \\
		\A{D_\orpsol} &= \Set{\phi(w_1w_2)\phi(w_2w_3) \mid w_1w_2,w_2w_3 \in \A{\exgraph} }.
	\end{align*}
	For arcs $\phi'\phi$ we set $\tau(\phi'\phi) = \tau(a(\phi'))$ and for nodes $\phi$ we set
	\[
		\edt[]{\phi} = \max \Set{\min I_{w'w} : w'w \in \phi},
		\qquad
		\ldt[]{\phi} = \min\Set{\max I_{w'w} : w'w \in \phi}.
	\]
\end{definition}
Note that in a feasible solution of \orp with time points $t$ and $\pi$, $t(\phi)\in I_{w'w}$ for all $w'w\in \phi$, so $t(\phi)\in [\edt[]{\phi},\ldt[]{\phi}]$.
Similarly, in a feasible solution of \rorp with time points $h$ and $\pi$, $h(\phi)\cap I_{w'w} \neq \emptyset$ for all $w'w\in \phi$, so $\max h(\phi)\geq \edt[]{\phi}$ and $\min h(\phi) \leq \ldt[]{\phi}$.

Since we now want to be sufficiently late at the boundary nodes, the obstacle to being a feasible solution is that paths in the partition-linking graph might be too short.
\begin{definition}
	A $\phi'\phi$-path $\spathsym$ in $D_\orpsol$ is \emph{too-short} if $\ldt[]{\phi'} + \tau(\spathsym) < \edt[]{\phi}$.
\end{definition}

These too-short paths are the obstruction to implementability.
\begin{theorem}
	A solution $\orpsol$ is feasible for the ORP if and only if $D_\orpsol$ contains no too-short path.
\end{theorem}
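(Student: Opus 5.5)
The plan mirrors the proof of \cref{characterisation:implementable}, with the roles of earliest and latest times swapped. The forward direction propagates the time points $t$ along a path in $D_\orpsol$. The converse constructs $t$ and $\pi$ explicitly from shortest-path-type quantities in $D_\orpsol$.

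\emph{Forward direction.} Let $t\colon\orpsol\to\NN$ and $\pi$ witness feasibility, so that $t(\phi)\in[\edt[]{\phi},\ldt[]{\phi}]$ for all $\phi$, as noted after the definition of $D_\orpsol$. Let $\phi_0\ldots\phi_n$ be a path in $D_\orpsol$. Each arc $\phi_{i-1}\phi_i$ comes from some $w_1w_2\in\phi_{i-1}$ and $w_2w_3\in\phi_i$. The second and third feasibility conditions then give
\[
t(\phi_i)\leq \pi(w_2)\leq t(\phi_{i-1})+\tau(\phi_{i-1}\phi_i).
\]
Summing along the path yields $t(\phi_n)\leq t(\phi_0)+\tau(\spathsym)$. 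Combining this with $\edt[]{\phi_n}\leq t(\phi_n)$ and $t(\phi_0)\leq\ldt[]{\phi_0}$ gives $\edt[]{\phi_n}\leq\ldt[]{\phi_0}+\tau(\spathsym)$, so the path is not too-short.

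\emph{Converse.} Assume $D_\orpsol$ has no too-short path. For each $\phi$ define
\[
t(\phi)=\min\Set{\ldt[]{\phi'}+\tau(\spathsym) : \spathsym \text{ a } \phi'\phi\text{-path in } D_\orpsol},
\]
where the trivial path is allowed. Since $\tau>0$, any walk can be shortcut to a path that is no longer, so the minimum over paths equals the minimum over walks. This is the point where cycles in $D_\orpsol$ must be handled, and I expect it to be the main subtlety.

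The value $t(\phi)$ then has three properties. First, $t(\phi)\leq\ldt[]{\phi}$, by taking the trivial path. Second, $t(\phi)\geq\edt[]{\phi}$, because no path is too-short. Third, $t(\phi)\leq t(\phi')+\tau(\phi'\phi)$ for every arc $\phi'\phi$, because a walk ending at $\phi'$ can be extended by this arc.

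Next define $\pi(w)=\edt[]{w}$ for $w\in\starts_\tlpsymb$. For all other $w$, set
\[
\pi(w)=\min\Set{t(\phi(w'w))+\tau(w'w) : w'\in\Nin{w}}.
\]

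\emph{Checking the four conditions.} Condition (i) holds by definition, and condition (iii) holds by the choice of the minimum.

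For condition (ii), consider an arc $ww''$ with $w=(u,s)$. If $w\in\starts_\tlpsymb$, then $s=\edt[]{w}$ by \cref{SW-BW-characterisation}, and hence
\[
t(\phi(ww''))\leq\ldt[]{\phi(ww'')}\leq\max I_{ww''}=s=\pi(w).
\]
Otherwise, for every in-arc $w'w$ the arc $\phi(w'w)\phi(ww'')$ lies in $D_\orpsol$. This gives $t(\phi(ww''))\leq t(\phi(w'w))+\tau(w'w)$, and taking the minimum over $w'$ yields $t(\phi(ww''))\leq\pi(w)$.

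For condition (iv), let $w\in\boundaries_\tlpsymb$ and let $w'w$ be an in-arc with $w'=(u',s')$. By \cref{SW-BW-characterisation} and the recurrence \eqref{eq:req-recurrence}, $\requirement(w)=\exc(w)$, and $w$ has time component $s'+\tau(w'w)$. Therefore
\[
t(\phi(w'w))\geq\edt[]{\phi(w'w)}\geq\min I_{w'w}=s'-\exc(w)+1,
\]
which gives $t(\phi(w'w))+\tau(w'w)\geq\ldt[]{w}+1$. Taking the minimum over all in-arcs yields $\pi(w)>\ldt[]{w}$.

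The delicate points are two. One is the cycle/finiteness issue in defining $t$, which the positivity of $\tau$ resolves. The other is using the explicit interval $I_{w'w}=(s'-\requirement(w),s']$ from \cref{characterisation-IP-calW-rtl} to translate the bounds on $t(\phi)$ into the start and boundary conditions.
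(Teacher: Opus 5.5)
Your proposal is correct and matches the paper's proof essentially step for step. The forward direction chains conditions (ii) and (iii) along the path. The converse uses the same definitions $t(\phi)=\min_{\phi'}\bigl(\ldt[]{\phi'}+\dist[\tau]{\phi',\phi}\bigr)$ and $\pi(w)=\min_{w'}\bigl(t(\phi(w'w))+\tau(w'w)\bigr)$, the same case split on $w\in\starts_\tlpsymb$ for (ii), and the same bound $\min I_{w'w}=s'-\exc(w)+1$ for (iv); your remark that minimising over paths equals minimising over walks just unpacks the paper's use of $\dist[\tau]{\cdot,\cdot}$.
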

\begin{proof}
	Let $\orpsol$ be feasible with time points $t$ and $\pi$.
	Moreover, let $\spathsym=\phi_1\ldots\phi_n$ be a path in $D_\orpsol$.
	For any arc $\phi_i\phi_{i+1}$ we get nodes $w_1,\, w_2,\, w_3\in\V{\exgraph}$ such that $\phi_i=\phi(w_1w_2)$ and $\phi_{i+1}=\phi(w_2w_3)$.
	By feasibility, $t(\phi_i) = t(\phi(w_1w_2)) \geq \pi(w_2) - \tau(w_1w_2) \geq t(\phi(w_2w_3)) - \tau(a(\phi_i)) = t(\phi_{i+1}) - \tau(\phi_i\phi_{i+1})$.
	Thus, $\ldt[]{\phi_1} + \tau(\spathsym) \geq t(\phi_1) + \tau(\spathsym) \geq t(\phi_n) \geq \edt[]{\phi_n}$, so $\spathsym$ is not too-short.
	
	Conversely, let $D_\orpsol$ contain no too-short path.
	In this case, we can define $t(\phi) = \min\Set{\ldt[]{\phi'} + \dist[\tau]{\phi',\phi} : \phi'\in\orpsol} \geq \edt[]{\phi}$ for $\phi\in\orpsol$.
	We also set $\pi(w) = \edt[]{w}$ for $w\in\starts_\tlpsymb$, and $\pi(w) = \min\Set{t(\phi(w'w)) + \tau(w'w) : w'\in\Nin{w}}$.
	The triangle inequality yields the following helpful inequality:
	for $\phi'\phi\in\A{D_\orpsol}$
	\begin{equation}
		\label{eq:triangle-inequality-orp}
		t(\phi) \leq t(\phi') + \tau(\phi'\phi).
	\end{equation}
	
	Using these values, $\pi(w)=\edt[]{w}$ for $w\in\starts_\tlpsymb$.
	Next, let $w'w\in\A{D_\tlpsymb}$ and $\phi=\phi(w'w)$.
If $w'\in\starts_\tlpsymb$, we get $t(\phi) \leq \ldt[]{\phi} \leq \max I_{w'w} = \edt[]{w'} = \pi(w')$.
	Otherwise, $\pi(w') = t(\phi(w''w')) + \tau(w''w')$ for some $w''\in\Nin{w'}$.
	Let $\phi(w''w') = \phi'$. Then $\phi'\phi\in\A{D_\orpsol}$, giving us $t(\phi) \leq t(\phi') + \tau(\phi'\phi) = \pi(w')$, by \eqref{eq:triangle-inequality-orp}.
	Also, $\pi(w) \leq t(\phi) + \tau(w'w)$ by definition.
	
Finally, for $w=(u,t)\in\boundaries_\tlpsymb$, we get a $w'=(u',t')\in\Nin{w}$ with $\pi(w) = t(\phi(w'w)) + \tau(w'w) \geq \edt[]{\phi(w'w)} + \tau(w'w) \geq \min I_{w'w} + \tau(w'w) > t' + \tau(w'w) - \requirement(w) = t - \exc(w) = \ldt[]{w}$.
\end{proof}

Just as for refining the discretisation $\calT$ for \rsnd, we have many options available for refining the discretisation $\calD$ for \rorp.
In fact, we could transfer the concepts we developed to solve \orp to finding small discretisations $\calD$ for \rorp.
But in preliminary experiments we found that empirically, a very simple initial discretisation was always sufficient that the \rorp solution was also feasible for \orp.
So we simply describe this initialisation and a sufficient refinement approach (which was never necessary in our experiments).

We initialise $\calD_a = \bigcup_{w'w\in \A{D_\tlpsymb} \colon a(w'w) = a} \Set{\min I_{w'w}, \max I_{w'w} +1}$, i.e., by splitting at the interval endpoints for all $ww'$ that need to select a time point on arc $a$.
If the solution of \rorp is nonetheless infeasible for \orp, we could find (e.g., by enumeration) one or more too-short paths and then add time points according to the following statement, which is conceptually equivalent to~\cref{theorem:real_arrival_time_eliminates}:
\begin{lemma}\label{lemma:orp_ddd_time_points}
	Let $\orpsol$ be a solution to \orp, $\spathsym = \phi_1\ldots\phi_n$ be a too-short path in $D_\orpsol$, and $t_i = \ldt[]{\phi_1} + \tau(\spathsym\phi_i)$.
	If $\calD$ is a discretisation with $t_i + 1 \in \calD_{a(\phi_i)}$ for $i \in (n]$, then $\orpsol$ is not feasible for \rorp.
\end{lemma}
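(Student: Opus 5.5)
We argue by contradiction. Suppose $\orpsol$ is feasible for \rorp, with intervals $h\colon\orpsol\to 2^{\NN}$, $h(\phi)\in\calD^I_{a(\phi)}$, and values $\pi$ satisfying the four relaxed properties together with $h(\phi(w'w))\cap I_{w'w}\neq\emptyset$. The plan mirrors the first half of the proof of the preceding theorem, with $\min h$ and $\max h$ replacing $t$. We derive an upper bound on $\max h(\phi_i)$ along the too-short path $\spathsym$. The inserted time points $t_i+1$ then turn each such bound into a bound on $\min h(\phi_i)$ that can be carried forward, and at the last node this contradicts $\max h(\phi_n)\geq\edt[]{\phi_n}$.

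\textbf{Step 1 (one-arc inequality).} Let $\phi'\phi\in\A{D_\orpsol}$. Choose $w_1,w_2,w_3$ with $\phi'=\phi(w_1w_2)$ and $\phi=\phi(w_2w_3)$. Properties~\ref{prop:orp-ddd-3} and~\ref{prop:orp-ddd-2} give
\[
\min h(\phi)\leq \pi(w_2)\leq \max h(\phi')+\tau(\phi'\phi).
\]
Alone this is too weak, since it lets the maximum of one interval feed the minimum of the next. This is exactly where the refinement enters.

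\textbf{Step 2 (induction along $\spathsym$).} We show by induction that $\max h(\phi_i)\leq t_i$ for all $i\in(n)$, where $t_i=\ldt[]{\phi_1}+\tau(\spathsym\phi_i)$.

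For the base case $i=1$: feasibility gives $\min h(\phi_1)\leq\ldt[]{\phi_1}=t_1$. Since $t_1+1\in\calD_{a(\phi_1)}$, the interval $h(\phi_1)\in\calD^I_{a(\phi_1)}$ cannot contain both a point at most $t_1$ and the point $t_1+1$. Hence $\max h(\phi_1)\leq t_1$.

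For the step $i\to i+1$: Step 1 yields $\min h(\phi_{i+1})\leq \max h(\phi_i)+\tau(\phi_i\phi_{i+1})\leq t_i+\tau(\phi_i\phi_{i+1})=t_{i+1}$. Here we use that $\tau(\phi_i\phi_{i+1})=\tau(a(\phi_i))$ is exactly the increment of $\tau(\spathsym\phi_i)$. If $i+1<n$, or more generally whenever $t_{i+1}+1\in\calD_{a(\phi_{i+1})}$, the same splitting argument gives $\max h(\phi_{i+1})\leq t_{i+1}$.

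\textbf{Step 3 (contradiction at the end).} Applying the induction up to $n$ gives $\max h(\phi_n)\leq t_n=\ldt[]{\phi_1}+\tau(\spathsym)$. Because $\spathsym$ is too-short, this is strictly less than $\edt[]{\phi_n}$. This contradicts $\max h(\phi_n)\geq\edt[]{\phi_n}$, the remark after the definition of $D_\orpsol$, which follows from $h(\phi_n)\cap I_{w'w}\neq\emptyset$ for every $w'w\in\phi_n$.

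\textbf{Main obstacle: indexing at the endpoints.} It must be checked that the hypothesis "$t_i+1\in\calD_{a(\phi_i)}$ for $i\in(n]$" covers every index where the splitting argument is used, namely $i=1$ through $n$. The intervals are half-open and integer, so $\max h=\text{(next breakpoint)}-1$, and the inequality $\min h\leq t<t+1\in\calD$ indeed forces $\max h\leq t$. With this, the argument is a direct analogue of \cref{theorem:real_arrival_time_eliminates}.
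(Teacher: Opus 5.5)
Your proof is correct and follows essentially the same route as the paper: you combine the two linking properties into the one-arc inequality $\min h(\phi_{i+1})\leq \max h(\phi_i)+\tau(\phi_i\phi_{i+1})$, use the breakpoints $t_i+1$ to push the induction $\max h(\phi_i)\leq t_i$ along the path, and reach a contradiction with $\max h(\phi_n)\geq\edt[]{\phi_n}$. One cosmetic fix: state the induction claim for $i\in(n]$ rather than $i\in(n)$, because the hypothesis covers $i=n$ and Step 3 uses that case.
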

\begin{proof}
	Suppose $\orpsol$ was feasible for \rorp, then we obtain intervals $h(\varphi)\in\calD_{a(\phi)}^I$ that satisfy all five conditions in the definition of the relaxation.
	For any arc $\phi_i\phi_{i+1}$ we get nodes $w_1,\, w_2,\, w_3\in\V{\exgraph}$ such that $\phi_i=\phi(w_1w_2)$ and $\phi_{i+1}=\phi(w_2w_3)$.
	By feasibility, $\max h(\phi_i) = \max h(\phi(w_1w_2)) \geq \pi(w_2) - \tau(w_1w_2) \geq \min h(\phi(w_2w_3)) - \tau(a(\phi_i)) = \min h(\phi_{i+1}) - \tau(\phi_i\phi_{i+1})$.
	
	Note that $\min h(\phi_1) \leq \ldt[]{\phi_1} = t_1$ and $t_1+1\in\calD_{a(\phi_1)}$, so $\max h(\phi_1) \leq t_1$ as well.
	Now, inductively, $\min h(\phi_{i+1}) \leq \max h(\phi_i) + \tau(\phi_i\phi_{i+1}) \leq \ldt[]{\phi_1} + \tau(\spathsym \phi_{i+1}) = t_{i+1}$.
	Again, since $t_{i+1}+1$ is in the discretisation of $a(\phi_{i+1})$, we get $\max h(\phi_{i+1}) \leq t_{i+1}$.
	In particular, $\max h(\phi_n) \leq t_n < \edt[]{\phi_n} \leq \max h(\phi_n)$, which is a contradiction.
\end{proof}

%% file: sections/variants.tex
\section{Discussion of Some Modifications}\label{sec:variants}

In this section, we discuss several modifications to the proposed framework and how our results carry over.
The first modification is a slightly different definition of (relaxed) too-long paths, the second a stronger relaxed problem, and the third an initialisation strategy for the discretisation.

\subsection{Alternative (\calT-Relaxed) Too-Long Criteria}
The following definition is similar to the elimination criterion in \cite[][Lemma 2]{shu_new_2026}, though slightly different since we again do not restrict too-long paths to start at commodity origins.

\begin{definition}[(\calT-Relaxed) too-long$^*$ paths]
	Let $uu'$ be an arc in $D_\solsym$ with $u=\vdn{v}{k}$, and $u'=\vdn{v'}{k'}$.
	We define
	\[
		\edt[]{uu'} = \max\Set{\edt[k]{v},\edt[k']{v}},
		\qquad
		\ldt[]{uu'}=\min\Set{\ldt[k]{v'},\ldt[k']{v'}}.
	\]
	
	We also say that a walk $\walksym=u_0\ldots u_n$ in $D_\solsym$ is a \emph{too-long$^*$ path} if $\edt[]{u_0u_1} + \tau(\walksym) > \ldt[]{u_{n-1}u_n}$.
	
	Similarly, we change the initialisation of the relaxed arrival times and set $\ratp{0} = \edt[]{u_0u_1}$ and say $\walksym$ is $\calT$-relaxed too-long$^*$ if $\ratp{n} > \ldt[]{u_{n-1}u_n}$.
\end{definition}
Note that we continue to assume that single-node too-long paths do not exist, so this new definition makes sense.
Essentially, this definition makes use of the observation that if two commodities are consolidated on an arc, they must also both traverse that arc, and we can disregard the less restrictive commodity.

\begin{example}
	Consider the partial solution in~\cref{example:sharedarc:routes-2}.
	Assume other paths exist between the nodes such that the unspecified values of $\edt{v},\ldt{v}$ are irrelevant.
    There exist two (minimal) too-long paths: $(v_1,k_1)(v_2,k_1)(v_3,k_1)$ and $(v_1,k_1)(v_2,k_1)(v_3,k_2)$. 
    Assume the first arc is fully discretised in $\calT$; then under our definition, we require one time point in $[2,6]$ on the second arc to make the first path $\calT$-relaxed too-long and one time point in $[4,6]$ to make the second path $\calT$-relaxed too-long.
    In contrast, we require only a time point in $[2,6]$ to make both paths $\calT$-relaxed too-long$^*$.
    Evidently, this is less restrictive and, depending on the other too-long paths, a smaller discretisation suffices under this definition.
	
	\begin{figure}[htbp]
		\centering
		\begin{tikzpicture}[x=3cm,y=1.2cm]
			\node[draw,label={[align=left]above:{$\edt[k_1]{v_1}=1$}}] (v1) at (0,0) {$v_1$};
			\node[draw] (v2) at (1,0) {$v_2$};
			\node[draw,label={[align=left]right:{$\ldt[k_1]{v_3}=6$\\$\ldt[k_2]{v_3} = 8 $}}] (v3) at (2,0) {$v_3$};
            \draw[->] (v1) to node[above] {$\Set{k_1}$} node[below] {$5$}(v2);
			\draw[->] (v2) to node[above] {$\Set{k_1,k_2}$} node[below] {$5$} (v3);
		\end{tikzpicture}
		\caption{Example (partial) solution to demonstrate that fewer time points might suffice.}
		\label{example:sharedarc:routes-2}
	\end{figure}
\end{example}

Any ($\calT$-relaxed) too-long path is also a ($\calT$-relaxed) too-long$^*$ path.
The statements regarding implementable and representable solutions (\cref{characterisation:implementable,characterisation:representable}) carry over, so we can also transfer the path-based refinement strategies.
It is also possible to construct a graph similar to $\exgraph$ and show a statement corresponding to~\cref{criterion:tetlpg}.
But because the modified elimination criterion considers the minimum over the last two commodities, the graph construction is more complicated.
Preliminary experiments indicated that using this stronger criterion has little impact on the resulting discretisations and overall running times, so for clarity of exposition we use the simpler criterion.

\subsection{Stronger Relaxation}
Next, we discuss a possible modification to the relaxed problem R-SND.
For a given arc $a$ and interval $I$ we define the part of the interval in which a commodity $k$ can actually be dispatched by $J_k(a,I) = I \cap I_k(a)$.
Then, we can replace \cref{def:representable:linking} for a representable solution by the more restrictive condition
\begin{equation*}
   \min J_k(a_i,h(c_i)) + \tau(a_i) \leq \max J_k(a_{i+1},h(c_{i+1})) \quad \forall i \in (n).
\end{equation*}
This essentially ensures that commodity $k$ cannot be dispatched on $a$ in $I$ and immediately after on $a'$ in $I'$ if such a transfer is not possible while respecting the commodity's dispatch windows, even if a dispatch in either interval by itself is possible.
We also describe how this modified problem can be modelled as an integer program in~\cref{appendix:IP}.
The commodity-specific subnetworks used for the relaxations in recent works~\cite{helber_arc-based_2026,marshall_interval-based_2021,shu_new_2026} reflect this rule for allowed arc sequences.
Evidently, this modified relaxation is more restrictive and every solution that is not representable for R-SND is also not representable under the modification.
So all of our refinement strategies are directly applicable when using this relaxation as well.

Similarly, when discussing DDD for the optimal refinement problem in \cref{subsec:solving-ORP-DDD}, we could have also strengthened the relaxation by not only requiring $J_{w'w} = h(\phi(w'w))\cap I_{w'w}\neq \emptyset$, but also using this more restrictive interval in \cref{prop:orp-ddd-2,prop:orp-ddd-3}.
These would then become
\begin{enumprop}[start=2]
	\item $\min J_{w'w} \leq \pi(w')$ for all $w'w\in\A{\exgraph}$ and
	\item $\max J_{w'w} + \tau(w'w) \geq \pi(w)$ for all $w'w\in\A{\exgraph}$.
\end{enumprop}
In our computational experiments, we use these stronger relaxations when solving both SND and ORP with DDD.

\subsection{Initial Discretisation}\label{variant:significant_timepoints}
Lastly, we discuss how the significant time points of~\textcite{shu_new_2026} can be integrated into our refinement approaches.
For an arc $a \in \A{D}$ we define the set of commodities that can traverse this arc in a feasible solution by $\calK_{a} = \Set{k \in \calK : \edt{a} \leq \ldt{a}}$ and then observe that some infeasible consolidations are easy to avoid:
\begin{theorem}[Significant time points, {{\cite[][Theorem 1]{shu_new_2026}}}]
\leavevmode\\
Given an arc $uv \in \A{D}$ and two different commodities $k_1,k_2 \in \calK_{uv}$ with $\edt[k_2]{u} + \tau(uv) > \ldt[k_1]{v}$, if there exists a time point $t \in \calT_{uv}$ with $t \in (\ldt[k_1]{v}- \tau(uv), \edt[k_2]{u}]$, then $k_1$ and $k_2$ cannot be consolidated in any $\calT$-representable solution.
\end{theorem}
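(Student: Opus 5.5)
The plan is a direct contradiction argument. It uses only the definition of $\calT$-representability, in particular the nonempty-intersection condition \cref{def:representable:nonemptyintervals}. Suppose some $\calT$-representable solution $\solsym=(\pathsym,\conssym)$ consolidates $k_1$ and $k_2$ on $uv$, so there is a consolidation $c=(uv,\kappa)\in\conssym$ with $k_1,k_2\in\kappa$. Let $h$ be the dispatch intervals witnessing representability, and write $h(c)=[t_j,t_{j+1})$ for consecutive points $t_j<t_{j+1}$ of $\calT_{uv}$.

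Since $k_1,k_2\in\kappa$, the condition gives both
\[
h(c)\cap I_{k_1}(uv)\neq\emptyset
\qquad\text{and}\qquad
h(c)\cap I_{k_2}(uv)\neq\emptyset .
\]
The first yields $\min h(c)=t_j\leq \ldt[k_1]{uv}=\ldt[k_1]{v}-\tau(uv)$. The second yields $\max h(c)=t_{j+1}-1\geq \edt[k_2]{uv}=\edt[k_2]{u}$. This step unpacks the arc versions of the earliest and latest dispatch times: $\edt{uv}=\edt{u}$ and $\ldt{uv}=\ldt{v}-\tau(uv)$.

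Now use the hypothesis that there is a time point $t\in\calT_{uv}$ with $\ldt[k_1]{v}-\tau(uv)<t\leq \edt[k_2]{u}$. Chaining the inequalities gives
\[
t_j\leq \ldt[k_1]{v}-\tau(uv)<t\leq \edt[k_2]{u}\leq t_{j+1}-1 ,
\]
so $t_j<t<t_{j+1}$. This is impossible because $t_j$ and $t_{j+1}$ are consecutive in $\calT_{uv}$, which gives the contradiction.

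The argument has no real obstacle. The only care needed is the integer and half-open interval conventions: $\max h(c)=t_{j+1}-1$, and the interval $(\ldt[k_1]{v}-\tau(uv),\edt[k_2]{u}]$ is nonempty precisely because $\edt[k_2]{u}+\tau(uv)>\ldt[k_1]{v}$. The assumption $k_1,k_2\in\calK_{uv}$ is not needed for the contradiction itself; it only ensures the statement is non-vacuous.
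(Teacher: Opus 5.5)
Your proof is correct. The nonempty-intersection condition of representability forces $\min h(c)\le \ldt[k_1]{v}-\tau(uv)$ and $\max h(c)\ge \edt[k_2]{u}$, so no interval of $\calT^I_{uv}$ can serve both commodities once a point of $\calT_{uv}$ lies in $(\ldt[k_1]{v}-\tau(uv),\edt[k_2]{u}]$. The paper gives no proof of its own here (it quotes the result from Shu et al.). Your direct argument is the natural one, and because it uses only that intersection condition, it carries over unchanged to the stronger relaxation of \cref{sec:variants}.
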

By adding such time points to the discretisation, we essentially directly eliminate all single-arc minimal too-long paths.
\textcite{shu_new_2026} propose solving a minimum hitting set problem to identify the smallest set of time points that eliminate all these paths and use this set to initialise the discretisation.

Specifically, for each arc $uv$ we want a minimum-size set of time points that hits all intervals in $\mathcal{I}_{uv} = \Set{(\ldt[k_1]{v}-\tau(uv),\edt[k_2]{u}] : k_1,k_2 \in \calK_{uv}, \edt[k_2]{u} + \tau(uv) > \ldt[k_1]{v}}$.
We could require these time points to remain in the discretisation during refinement, or instead add all corresponding one-arc paths to the set $\tlpsymb$ for the path-based approaches or into the graph $\exgraph$.
Since these paths can be quite numerous, we instead propose the following approach (for each arc).
First, we solve the hitting set problem, obtaining a set of time points $T_{uv}$.
We then assign each interval $I$ to one of the selected points it contains, which we call $t(I)$.
Evidently, we could replace a $t \in T_{uv}$ with any $t' \in I(t) = \bigcap_{I \in \mathcal{I}_{uv}\colon t(I) = t} I$ and still hit every interval.
So in the initial discretisation, we simply set $\calT_{uv} = \{0,H\} \cup T_{uv}$ and during refinement, we add a constraint to our integer programs that the discretisation needs to include at least one $t' \in I(t)$ for each $t \in T_{uv}$. 
Essentially, we allow the significant time points to be repositioned to help eliminate newly found conflicts.

%% file: sections/computational-study.tex
\section{Computational Study}
We present the results of two groups of experiments.
First, we demonstrate the efficiency of our exact methods for solving instances of the optimal refinement problem and of our algorithm for constructing~\(\exgraph\) in Section~\ref{subsec:orpsolvers}.
Then, we investigate the benefit of optimal refinement for the overarching DDD approach when solving \snd in Section~\ref{subsec:sndsolvers}.

We implemented all algorithms in Python 3.11.4 and solved all mixed-integer programs with Gurobi 12.0.0 \cite{gurobi}.
The implementation and data will be made publicly available upon acceptance of this manuscript.
All experiments were conducted on machines with a Xeon L5630 Quad Core \SI{2.13}{\giga\hertz} processor.

We utilise the instance sets for the \snd proposed by \textcite{boland_continuous-time_2017} (and classified in~\cite{marshall_interval-based_2021}), \textcite{van_dyk_sparse_2024}, and \textcite{helber_arc-based_2026}.
\cref{tab:instance-characteristics} gives an overview of the size of these instances. For more details we refer to the corresponding papers.
Note that we omit the LC/LF and LC/HF instances of \textcite{boland_continuous-time_2017}, since almost all are solved in a single iteration with state-of-the-art DDD solvers, so no refinement takes place.

\begin{table}[tbp]
    \centering
    \caption{Characteristics of the \snd instance sets used.}
    \label{tab:instance-characteristics}
    \begin{tabular}{llrrrr}
        \toprule
        Source & Class & Instances & Commodities & Nodes & Arcs \\
        \midrule
        \cite{boland_continuous-time_2017} &HC/HF           & 177 & 100--400 & 20--30 & 228--683\\
        &HC/LF           & 183 & 100--400 & 20--30 & 229--686 \\
        \midrule
        \cite{van_dyk_sparse_2024}&Critical times  & 192 & 150--200 & 20     & 230--300 \\
        &Designated paths& 192 & 150--300 & 20--25 & 230--480 \\
        &Hub-and-spoke (easy)   & 192 & 100      & 20     & 45--95   \\
        \midrule
        \cite{helber_arc-based_2026}&Grid            & 100 & 50       & 16     & 48        \\
        &Hub-and-spoke (hard) & 100 & 75       & 30     & 120--204 \\
        &Star            & 100 & 100      & 21     & 40       \\
        &Wheel           & 100 & 100      & 12     & 44       \\
        \bottomrule
    \end{tabular}
\end{table}

\subsection{Comparing Solvers for ORP}\label{subsec:orpsolvers}
To generate instances of ORP, we ran the DDD solver for one iteration on each instance with a time limit of one hour and a relative optimality-gap limit of \SI{1}{\percent} and stored the dispatch-node graph $D_\solsym$ of the resulting relaxation solution $\solsym$.
The initial discretisation for this first iteration contained the significant time points as described by \textcite{shu_new_2026}.
We made this choice to obtain ORP instances that are representative of those encountered in state-of-the-art DDD approaches.

\cref{tab:orp-instance-characteristics} reports the characteristics of the resulting ORP instances. 
About \SI{90}{\percent} of graphs contained fewer than \num{10000} minimal too-long paths, about \SI{4}{\percent} contained between \num{10000} and \num{50000}, about \SI{1}{\percent} contained between \num{50000} and \num{100000}, but about \SI{5}{\percent} contained more than \num{100000}.
For graphs with more than \num{100000} minimal too-long paths, enumerating them almost always exceeded the memory limit (\SI{16}{\giga\byte}), likely due to exponential blow-up.
We therefore capped enumeration at \num{100000} minimal too-long paths for approaches where this is relevant; the last column reports the number of instances that hit this cap in each class.

\begin{table}[tbp]
    \centering
    \caption{Characteristics of derived ORP instance sets.}
    \label{tab:orp-instance-characteristics}
    \begin{tabular}{llrrrr}
        \toprule
        Source & Class & Instances & Nodes & Arcs & $|\tlpsymb| \geq 10^5$ \\
        \midrule
        \cite{boland_continuous-time_2017}   & HC/HF            & 177 & 298--1261 & 317--4041  & 0  \\
                 & HC/LF            & 149 & 273--1242 & 260--2982  & 0  \\
        \midrule
        \cite{van_dyk_sparse_2024}      & Critical times   & 188 & 455--732  & 1143--4001 & 0  \\
                 & Designated paths & 192 & 490--1296 & 852--8196  & 51 \\
                 & Hub-and-spoke (easy)  & 158 & 293--476  & 1091--2756 & 7  \\
        \midrule
        \cite{helber_arc-based_2026} & Grid             & 87  & 151--204  & 259--524   & 0  \\
                 & Hub-and-spoke (hard)  & 84  & 250--311  & 440--729   & 0  \\
                 & Star             & 100 & 284--299  & 482--653   & 0  \\
                 & Wheel            & 100 & 281--316  & 520--712   & 0  \\
        \bottomrule
    \end{tabular}
\end{table}

Note that the instance counts are lower for some classes than the number of available \snd instances.
We excluded 30 of the (easy) hub-and-spoke instances because they had arcs with $\tau(a) = 0$, which may lead to infinitely many minimal too-long paths.
For the omitted grid and hard hub-and-spoke instances, the solver ran into the one-hour time limit while solving the first relaxation.
The remaining instances were omitted because the dispatch-node graph obtained after the first iteration contained no too-long paths and was thus not relevant for the ORP.

We first compare the efficiency of \cref{algo:exgraph} for deriving $\exgraph$ from $D_\solsym$ to the naive approach of first enumerating all minimal too-long paths and then merging them.
The box plot in \cref{fig:exgraph_creation_boxplot} summarises the distribution of running times we observed for instances in each class.
We observe that for most classes, the mean and the quartiles are similar between the two construction methods.
Recall that most of our dispatch-node graphs contain (significantly) fewer than \num{10000} minimal too-long paths, so it is not surprising that enumerating them is not a large burden.
But for some classes, we observe that path-based construction may lead to outliers with significantly longer running times. 
Importantly, due to our enumeration limit of \num{100000} paths, the path-based approach did not create the full \(\exgraph\) on the 58 instances in which it ran into the limit, while our graph-based algorithm was able to create the full graph every time.
So our time-expanded too-long path graph allows us to represent all conflicts even when the path enumeration balloons past the memory limit.
\begin{figure}[htbp]
    \centering
	\input{computational-results/exp1_runtime_boxplot.pgf} 
    \caption{Time to create $\exgraph$ by enumerating and merging all minimal too-long paths (Paths) or by our~\cref{algo:exgraph} (Graph).}
    \label{fig:exgraph_creation_boxplot}
\end{figure}

Next, we compare the efficacy of various approaches for solving the ORP.
As a baseline (BL) heuristic, we add for each enumerated minimal too-long path time points according to its real arrival time, which by~\cref{theorem:real_arrival_time_eliminates} will eliminate it.
We also use the matheuristic (MH) of \textcite{shu_new_2026} adapted to the arc-based discretisation. 
Recall that this heuristic uses their stronger elimination criterion and so in principle could find smaller discretisations than our approaches. 
For exact optimisation, we use the path-based MIP~\cref{mip:pathbased} (Path), the graph-based MIP~\cref{mip:graphbased} (Graph), and our graph-based DDD approach (DDD).
Note that Path is only exact for instances where all minimal too-long paths could be enumerated.

\cref{tab:orp-exp2} reports the mean running time and solution quality per class.
The running time also includes the time for enumerating too-long paths or constructing $\exgraph$.
Recall that for some instances, not all minimal too-long paths were enumerated; in principle, the path-based approaches (BL, MH, Path) may produce solutions for these instances that do not eliminate all paths (and thus are infeasible).
This affects 78 designated path, 29 (easy) hub-and-spoke, and 9 critical time instances.
The numbers here are higher than in the previous experiments because MH uses the too-long path definition of~\textcite{shu_new_2026}, which yields additional too-long paths (that would be non-minimal under our definition).
We retain these instances in~\cref{tab:orp-exp2-runtime}, since we still obtain useful information about the time it takes to compute a discretisation that eliminates many too-long paths. 
But for~\cref{tab:orp-exp2-points}, we exclude them, since the number of time points cannot sensibly be compared with the number for feasible solutions.

We observe that BL adds many more time points than are strictly necessary, with discretisation sizes between \SI{20}{\percent} and \SI{68}{\percent} above the optimum, while the matheuristic significantly reduces this gap, even reaching near-optimal solutions for the star instances.
But the graph-based MIP and DDD approach achieve the optimal solution values significantly faster on average than MH and Path.
For some classes, DDD is even faster than the simple BL heuristic, since it never requires enumeration of all minimal too-long paths.
We also depict the performance profile for the three exact algorithms in~\cref{fig:orp-exp2-perfprof}.
DDD is almost always the fastest, with Graph and Path taking at least 10 times as long as DDD on about \SI{20}{\percent} and \SI{50}{\percent} of instances, respectively.
Due to the initial time points, DDD always finished in one iteration.
In summary, for the ORP instances resulting from \snd instances from the literature, our DDD approach allows us to find optimal refinements with mean running times below \SI{1.3}{\second} in every instance class.

\begin{table}[htbp]
	\footnotesize
	\centering
	\caption{Comparison of ORP solution methods.}
	\label{tab:orp-exp2}
	\begin{subtable}{0.55\linewidth}
		\centering
		\caption{Mean running time in \unit{\second}.}
		\label{tab:orp-exp2-runtime}
		\begin{tabular}{lrrrrr}
			\toprule
& BL & MH & Path & Graph & DDD \\
			Class &  &  &  &  &  \\
			\midrule
			HC/HF & 0.01 & 10.06 & 27.51 & 7.34 & 0.19 \\
			HC/LF & 0.01 & 3.38 & 2.98 & 1.75 & 0.16 \\
			Crit. time & 0.05 & 4.43 & 2.02 & 0.11 & 0.08 \\
			Des. path & 1.36 & 44.72 & 56.96 & 1.36 & 1.27 \\
			H\&S (easy) & 0.39 & 6.96 & 16.57 & 0.17 & 0.12 \\
			Grid & 0.01 & 6.34 & 1.12 & 0.10 & 0.05 \\
			H\&S (hard) & 0.01 & 3.40 & 0.66 & 0.19 & 0.05 \\
			Star & 0.00 & 0.06 & 0.09 & 0.08 & 0.01 \\
			Wheel & 0.00 & 1.74 & 0.36 & 0.18 & 0.03 \\
			\bottomrule
		\end{tabular}
	\end{subtable}
	\begin{subtable}{0.4\linewidth}
		\centering
		\caption{Mean percentage excess in discretisation size relative to the ORP optimum.}
		\label{tab:orp-exp2-points}
		\begin{tabular}{lrrr}
			\toprule
			& BL & MH  \\
			Class &  &    \\
			\midrule
			HC/HF & 67.98 & 18.62\\
			HC/LF & 56.94 & 14.48\\
			Crit. time & 25.05 & 10.76\\
			Des. path & 60.24 & 28.79\\
			H\&S (easy) & 20.43 & 8.57 \\
			Grid & 40.64 & 11.54 \\
			H\&S (hard) & 51.38 & 6.41 \\
			Star & 36.66 & 0.04 \\
			Wheel & 36.58 & 2.41 \\
			\bottomrule
		\end{tabular}
	\end{subtable}
\end{table}

\begin{figure}[htbp]
    \centering
    
    \input{computational-results/exp2_performance_profile.pgf}
    \caption{Performance profile for exact ORP solvers: percentage of instances solved to optimality within $\tau$ times the running time of the fastest solver on each instance.}
    \label{fig:orp-exp2-perfprof}
\end{figure}

\subsection{Benefit of Optimal Refinement for Solving \snd}\label{subsec:sndsolvers}

Next, we investigate the impact of different refinement strategies on the performance of the DDD algorithm for the \snd.
For these experiments, we disregard the instances of~\textcite{van_dyk_sparse_2024}, since they are trivial to solve with existing methods~\cite{helber_arc-based_2026}.
We compare various refinement strategies in a shared DDD algorithm.
This algorithm is based on the stronger relaxed problem discussed in Section~\ref{sec:variants} and uses various features of state-of-the-art algorithms.
We use the repair heuristic of~\textcite{marshall_interval-based_2021} to obtain feasible solutions to \snd in each iteration and use their adaptive gap to only solve the relaxations to certain solution quality. 
To ensure that the heuristic always finds a feasible solution, we also add a constraint to enforce $\tau(\spathsym_k) \leq \ell_k - r_k$ for each commodity $k$ \cite{boland_continuous-time_2017}.
The algorithm also uses every solution found by the relaxation solver for refinement and repair, as suggested by~\textcite{shu_new_2026}, and initialises the relaxation with their significant time points.
Each instance is solved with a gap target of \SI{1}{\percent} and time limit of one hour.

Again, we use the baseline (BL) strategy of simply adding the time points of~\cref{theorem:real_arrival_time_eliminates} for all minimal too-long paths, as well as the matheuristic (MH) of \textcite{shu_new_2026}.
Both approaches simply add time points to the existing discretisation.
Similarly, we introduce an approach that uses the DDD solver for the ORP to find the smallest set of time points to add to the existing discretisation to eliminate all newly observed too-long paths (Iterative). 
As mentioned in the literature overview, we also investigate strategies that can remove time points from the discretisation.
We propose two methods using this idea.
The method Parallel finds a smallest discretisation $\calT$ that makes all too-long paths in the dispatch-node graphs of all relaxation solutions obtained so far \calT-relaxed too-long.
This makes the refinement problem substantially larger and potentially harder to solve, even with the DDD approach.
This motivates the method Merged, which takes a similar approach but merges the dispatch-node graphs, reducing the refinement problem's size at the possible cost of additional time points.
For Parallel and Merged, we also require them to include significant time points as discussed in Section~\ref{variant:significant_timepoints}.

\cref{tab:orp-exp3} reports statistics for each approach.
Note that we did not include the instances that were solved in one iteration or for which the first relaxation could not be solved within the time limit, since refinement does not impact their performance (i.e., instance counts per class are as in \cref{tab:orp-instance-characteristics}).
Note, also, that the model size refers to the number of commodity transport variables relative to the number in a fully time-expanded model, as in \cite{helber_arc-based_2026}.
The average number of iterations for instances in each class only varied by about \num{0.2} between the approaches and is not reported in detail.
The differences in running time primarily reflect differences in the difficulty of solving the relaxations rather than differences in iteration counts due to weaker or stronger relaxations.
We see that approaches using optimal refinement (Parallel, Merged, Iterative) achieve the lowest running times for each instance class and solve slightly more instances than the heuristic approaches.
But overall, the gains in mean running time are modest. For example, Iterative uses \SI{12}{\percent} and \SI{23}{\percent} less time on average than MH and BL, respectively, on the HC/HF instances.
We observe that Parallel usually achieves the smallest model sizes at termination, almost \SI{20}{\percent} smaller than the naive BL approach on the HC/HF class.
Merged achieves essentially the same size while requiring less time for refinement.
But again, the decreases in model size are modest, with the largest decrease of \SI{18}{\percent} from \SI{0.76}{\percent} of a fully time-expanded model to \SI{0.62}{\percent} for Parallel versus BL on HC/HF.

We note that we also conducted experiments in which we solved the ORP with a weighted objective function to prefer arc time points that lead to timed copies for fewer commodities.
This resulted in negligible differences in model size and running time; therefore we omit the detailed results here.

\begin{table}
	\footnotesize
	\centering
	\caption{Mean results per class for each refinement strategy.}
	\label{tab:orp-exp3}
	\begin{tabular}{llrrrr}
		\toprule
		&  & Solved (\%) & Model size (\%) & Ref. (s) & Time (s) \\
		Class & Method &  &  &  &  \\
		\midrule
		\multirow[t]{5}{*}{HC/HF} & BL & 97.7 & 0.76 & 0.53 & 252.83 \\
		& MH & 99.4 & 0.67 & 19.78 & 221.47 \\
		& Parallel & 98.9 & 0.62 & 12.19 & 203.85 \\
		& Merged & 100.0 & 0.62 & 5.87 & 205.15 \\
		& Iterative & 100.0 & 0.65 & 2.33 & 194.12 \\
		\cline{1-6}
		\multirow[t]{5}{*}{HC/LF} & BL & 100.0 & 2.06 & 0.28 & 30.74 \\
		& MH & 100.0 & 1.89 & 8.13 & 35.96 \\
		& Parallel & 100.0 & 1.80 & 5.02 & 32.01 \\
		& Merged & 100.0 & 1.80 & 3.51 & 26.85 \\
		& Iterative & 100.0 & 1.84 & 1.37 & 26.71 \\
		\cline{1-6}
		\multirow[t]{5}{*}{Grid} & BL & 73.6 & 20.62 & 0.19 & 1425.75 \\
		& MH & 74.7 & 19.34 & 24.98 & 1412.90 \\
		& Parallel & 82.8 & 18.40 & 2.59 & 1280.40 \\
		& Merged & 82.8 & 18.41 & 1.32 & 1244.41 \\
		& Iterative & 78.2 & 18.90 & 0.65 & 1306.17 \\
		\cline{1-6}
		\multirow[t]{5}{*}{H\&S (hard)} & BL & 70.2 & 9.49 & 0.21 & 1565.40 \\
		& MH & 75.0 & 9.22 & 19.70 & 1444.95 \\
		& Parallel & 77.4 & 9.09 & 6.36 & 1431.72 \\
		& Merged & 76.2 & 9.10 & 4.15 & 1456.04 \\
		& Iterative & 77.4 & 9.16 & 1.10 & 1446.40 \\
		\cline{1-6}
		\multirow[t]{5}{*}{Star} & BL & 100.0 & 15.28 & 0.04 & 20.89 \\
		& MH & 100.0 & 14.77 & 7.22 & 24.20 \\
		& Parallel & 100.0 & 14.57 & 1.01 & 18.93 \\
		& Merged & 100.0 & 14.58 & 0.96 & 18.79 \\
		& Iterative & 100.0 & 14.60 & 0.21 & 16.42 \\
		\cline{1-6}
		\multirow[t]{5}{*}{Wheel} & BL & 85.0 & 5.14 & 0.16 & 1473.16 \\
		& MH & 89.0 & 4.76 & 5.43 & 1269.75 \\
		& Parallel & 93.0 & 4.59 & 2.26 & 1112.51 \\
		& Merged & 92.0 & 4.58 & 1.17 & 1076.32 \\
		& Iterative & 91.0 & 4.71 & 0.66 & 1199.93 \\
		\cline{1-6}
		\bottomrule
	\end{tabular}
\end{table}

%% file: sections/conclusion.tex
\section{Conclusion}
We introduced a more compact representation of conflicts in a DDD approach, which we used to design efficient methods for finding small discretisations that eliminate all conflicts.
Solving our refinement problem optimally leads to smaller relaxation problems that can, on average, be solved faster without a significant change in iteration count.
But, as reported by~\cite{helber_arc-based_2026}, the growth in relaxation size from the first to the final iteration for state-of-the-art DDD solvers is already not very large, leaving limited scope for further reductions.
We note that for instances requiring more refinement than the existing benchmarks, the benefit of our approaches may be significantly larger.
For example, instances with wider time windows $\ell_k - r_k$ or larger capacities $u_a$ relative to commodity quantities $q_k$ may lead to more consolidations, more conflicts and thus more refinement. 

One promising application of our efficient refinement methods is the development of better heuristics for the relaxation, such that relevant conflicts can be discovered quickly while keeping the model size small.
Then, one might get lucky and only have to solve the relaxation once to obtain an optimal solution.
So far, there is also no satisfactory treatment of cases where dispatch-node graphs contain cycles; especially if commodity time windows are long, eliminating conflicts involving cycles may require many time points, even under optimal refinement.
On the theoretical side, a practical characterisation of an eliminated too-long path would be interesting, though we do not expect this to lead to significantly smaller discretisations.
Similarly, existing approaches are based on preventing conflicts from recurring in \emph{any feasible} solution, but it would be sufficient to prevent them from recurring in \emph{optimal} solutions.
It would also be interesting to extend our methodology to other problems or service network design variants that require different refinement strategies to accommodate extensions such as hub-capacity constraints \cite{he_exact_2022} or holding costs \cite{shu_holding_cost_2024}.

%% file: sections/relaxation-mip.tex
\section{Integer Program for the Relaxed Problem}\label{appendix:IP}
We construct a time-expanded network $D_\calT$ to formulate the integer program for \rsnd with discretisation $\calT$.
Each arc $a \in \A{D}$ receives a timed copy for each interval in $\calT^I_a$, so we obtain the transport arcs
\begin{equation*}
A_\mathrm{T} = \Set{(a,I) : a \in \A{D}, I \in \calT^I_a}.
\end{equation*}
We also identify arcs this way and explicitly specify their head and tail nodes later.
Each node receives a timed copy for each arrival time obtained by dispatching at the start of an interval on an incoming arc.
We also add a timed copy at the beginning of the time horizon.
Formally, we obtain the derived node time discretisation $\calT_v = \Set{t + \tau(a): a \in \Ain{v}, [t,t') \in \calT^I_a} \cup \Set{0}$ and the time nodes
\begin{equation*}
    \V{D_\calT} = \Set{(v,t) \colon v \in \V{D}, t \in \calT_v }.
\end{equation*}
Then, for an arc $b = (a,I) = (uv,[t,t')) \in \A{D_\calT}$ we say that $\head(b) = (v,t+\tau(a))$ and $\tail(b) = (u, \max\Set{t'' \in \calT_u: t'' \leq t'-1})$.
We also specify holding arcs $A_\mathrm{H}$ connecting successive timed copies of each node.
The complete arc set is then $\A{D_\calT} = A_\mathrm{T} \cup A_\mathrm{H}$.
For each commodity we specify the subnetwork $D_{\calT,k}$ available to that commodity by removing arcs $(a,I)$ with $I \cap I_k(a) = \emptyset$.
Finally, we extend the arc-specific parameters to the time-expansion, i.e., for each $b = (a,I) \in A_\mathrm{T}$ we get $c_k(b) = c_k(a), f(b) = f(a),$ and $u(b) = u(a)$. 
Holding arcs incur no cost.
We define each commodity's timed origin and destination as the nodes $\hat{o}_k = (o_k,0)$ and $\hat{d}_k = (d_k,\max \calT_{d_k})$.

We can now state the following integer program (IP) for \rsnd:
\begin{subequations}\label{a:mip}
\begin{alignat}{2}
    \min \quad & \sum_{k \in \calK}\sum_{b \in \A{D_{\calT,k}}} c_k(b) x_{k,b} + \sum_{b \in A_\mathrm{T}} f(b) y_b && \label{a:rp:obj}\\
    \mathrm{s.t.} \quad & \sum_{\mathclap{b \in \Aout{w}}} x_{k,b} - \sum_{\mathclap{b \in \Ain{w}}} x_{k,b}
    = \begin{cases}
        1 & \text{if } w = \hat{o}_k,\\
        -1 & \text{if } w = \hat{d}_k,\\
        0 & \text{otherwise}
    \end{cases}
    & \quad & \forall k \in \calK,\ w \in \V{D_{\calT,k}} \label{a:rp:flowcon}\\
    & \sum_{\mathclap{k \in \calK\colon b \in \A{D_{\calT,k}}}} q_k x_{k,b} \leq u(b) y_b
    && \forall b \in A_\mathrm{T} \label{a:rp:cap}\\
    & x_{k,b} \in \{0,1\} && \forall k \in \calK,\ b \in \A{D_{\calT,k}} \\
    & y_b \in \NN && \forall b \in A_\mathrm{T} \label{a:rp:y}
\end{alignat}
\end{subequations}
For each arc $b=(a,I) \in \A{D_{\calT,k}}\cap A_\mathrm{T}$, the variable $x_{k,b}$ takes the value 1 if and only if $a \in \spathsym_k$ and $h(c(a,k)) = I$.
We assume (without loss of optimality) that all commodities that traverse arc $a$ in the same interval $I$ are consolidated together, and $y_{(a,I)}$ then denotes the number of vehicles necessary for this consolidation.

\begin{remark}
   Our definitions of SND and R-SND require each commodity to follow a path (i.e., with no node repetitions).
   For the relaxation and the theoretical results, we could in fact also allow walks; the results remain valid, but the notation becomes cumbersome.
   We could add a constraint 
   \begin{equation*}
    \sum_{(a,I) \in \A{D_{\calT,k}} \colon a \in \Aout{v}} x_{k,(a,I)}  \leq 1 \quad \forall k \in \calK, v \in \V{D}
   \end{equation*} 
   to only obtain paths in the relaxation, but as previously noted~\cite{helber_arc-based_2026}, this tends to make the relaxation harder to solve without substantially improving the lower bound.
   Therefore, we omit this constraint when solving R-SND. 
\end{remark}

The IP evidently correctly computes the cost of a solution; it remains to show that each of its optimal solutions can be mapped to a solution of R-SND with the same value and vice versa.
First, we note that R-SND allows multiple consolidations using the same interval, while the IP does not. 
But in this case, we could merge the consolidations, preserving feasibility and the objective value (otherwise the solution would not have been optimal). 
After merging any such consolidations, the feasible paths and consolidations are exactly the same.
By construction of $D_{\calT,k}$, a commodity $k$ may select only arc-interval combinations $(a,I)$ fulfilling~\cref{def:representable:nonemptyintervals}.
Two successive timed transport arcs $(a,I)$ and $(a',I')$ can be selected for a commodity in the IP if and only if $\min I + \tau(a) \leq \max I'$ (fulfilling~\cref{def:representable:linking}).
To see this, consider that $(a,I)$ arrives at the intermediate node exactly at time $\min I + \tau(a)$ and $(a',I')$ leaves from $t = \max \Set{t' \in \calT_v : t' \leq \max I'}$. 
These two timed nodes are identical (or linked by holding arcs) if and only if the condition holds.

Next we specify how to solve the relaxed problem when we replace~\cref{def:representable:linking} by the stronger condition where we set $J_k(a,I) = I \cap I_k(a)$ and demand
\begin{equation*}
   \min J_k(a_i,h(c_i)) + \tau(a_i) \leq \max J_k(a_{i+1},h(c_{i+1})) \quad \forall i \in (n).
\end{equation*}
In fact, it suffices to change the commodity-specific time-expanded networks. We can then use exactly the same IP.
First, we specify for each commodity and each node the time discretisation
\begin{equation*}
   \calT_{k,v} =  \{\edt{v}\} \cup \{\min J_k(a,I) + \tau(a) \colon\, a \in \Ain{v}, I \in \calT^I_a, J_k(a,I) \neq \emptyset\}
\end{equation*}
which adds time points for the earliest arrivals for all intervals on incoming arcs, while also considering the commodity's dispatch window $I_k(a)$.
For each commodity, we then construct the following timed node copies:
\begin{equation*}
    \V{D_{\calT,k}} = \{(v,t) \colon\, v \in \V{D}, \edt{v} \leq \ldt{v}, t \in \calT_{k,v}\}.
\end{equation*}
The timed transport and holding arcs are as before, with a slight modification to the nodes connected by each transport arc.
For $b = (a,I)$ with $a = uv$ we set $\head_k(b) = (v, \min J_{k}(a,I) + \tau(a))$ and $\tail_k(b) = (u, \max \Set{t \in \calT_{k,u} :  t \leq \max J_k(a,I)})$.
Finally, we define $\hat{o}_k = (o_k,r_k)$ and $\hat{d}_k = (d_k, \max \calT_{k,d_k})$.
By essentially the same argument as before, the IP on these time-expanded networks solves the stronger R-SND variant.